\theoremstyle{plain}
\newtheorem{thm}{Theorem}[section]
\newtheorem{lem}[thm]{Lemma}
\newtheorem{prop}[thm]{Proposition}
\newtheorem{coro}[thm]{Corollary}
\theoremstyle{definition}
\newtheorem{deff}[thm]{Definition}
\newtheorem{exe}[thm]{Example}
\theoremstyle{remark}
\DeclareMathOperator{\tr}{tr} 
\DeclareMathOperator{\sing}{sing} 
\DeclareMathOperator{\vol}{vol} 
\newcommand{\abs}[1]{\left\lvert#1\right\rvert} 
\newcommand{\norm}[1]{\left\lVert#1\right\rVert} 
\newcommand{\Z}{\mathbb{Z}} 
\newcommand{\R}{\mathbb{R}} 
\newcommand{\C}{\mathbb{C}} 
\newcommand{\Hip}{\mathbb{H}} 
\newcommand{\Proj}{\mathbb{P}} 
\newcommand{\s}{\mathbb{S}} 
\newcommand{\D}{\mathbb{D}} 
\newcommand{\T}{\mathbb{T}} 
\newcommand{\Cinf}{\mathcal{C}^\infty} 
\newcommand{\cal}[1]{\mathcal{#1}} 
\renewcommand{\Re}{\operatorname{Re}} 
\renewcommand{\Im}{\operatorname{Im}} 
\renewcommand{\d}{\mathop{}\!\mathrm{d}} 
\begin{document}

\title{Levi-flat hypersurfaces and their complement in complex surfaces}
\author{Carolina \textsc{Canales González}}
\address{Laboratoire de Mathématiques d'Orsay, Univ. Paris-Sud, CNRS, Université Paris-Saclay, 91405 Orsay, France}
\curraddr{Anillo PIA-CONICYT ACT1415, Universidad de la Frontera, Departamento de Ma\-te\-má\-ti\-ca y Estadística, Temuco, Chile}
\email{carolina.canales@ufrontera.cl}

\begin{abstract}
In this work we study analytic Levi-flat hypersurfaces in complex algebraic surfaces. First, we show that if this foliation admits chaotic dynamics (i.e. if it does not admit a transverse invariant measure), then the connected components of the complement of the hypersurface are modifications of Stein domains. This allows us to extend the CR foliation to a singular algebraic foliation on the ambient complex surface. We apply this result to prove, by contradiction, that analytic Levi-flat hypersurfaces admitting a transverse affine structure in a complex algebraic surface have a transverse invariant measure. This leads us to conjecture that Levi-flat hypersurfaces in complex algebraic surfaces that are diffeomorphic to a hyperbolic torus bundle over the circle are fibrations by algebraic curves.
\end{abstract}

\maketitle

\section{Introduction}

In this work, we study real analytic hypersurfaces in complex algebraic surfaces that satisfy a certain partial differential equation that we describe here below. Given a real hypersurface $M$ in a complex surface $X$, we define the Cauchy-Riemann distribution on $M$, called in abbreviated form CR distribution, that in a point $p$ of $ M $ is the unique complex line contained in $T_pM$, i.e. the distribution $TM\cap iTM$. The hypersurface $M$ is called Levi-flat if the CR distribution is integrable. This means that through any point of $M$ passes a non-singular holomorphic curve of $X$ that is completely contained in $M$. These curves correspond then to the leaves of a foliation on $M$, noted $\cal F$ hereafter, called the Cauchy-Riemann foliation or CR foliation. The condition that ensures a real hypersurface to be Levi-flat can be synthesized by the vanishing of the Levi form. The purpose of this work is to understand the interaction between the dynamics of the CR foliation, the topology of the hypersurface, and the geometry of its complement in the ambient surface.

\vspace{1em}

Before stating our results, let us give some examples of Levi-flat hypersurfaces, which we organize following the dynamic complexity of their CR foliation. This gives us the opportunity to introduce some terminology borrowed from the theory of dynamical systems.

\vspace{1em}

\paragraph*{\textbf{Periodic Levi-flat Hypersurfaces.}} These hypersurfaces are those fibered by algebraic curves. They appear in all birational equivalence classes of algebraic surfaces. In fact, any complex algebraic surface admits a pencil that, after a finite number of blow-ups, becomes a singular fibration. A family of fibers of such a fibration parametrized by the circle describes a periodic Levi-flat hypersurface. The topology of these Levi-flat hypersurfaces can be very rich: all Thurston's geometries except the spherical geometry are realized by Levi-flat hypersurfaces of this type. Certain compact quotients of each of the following models appear thus as a periodic Levi-flat hypersurface
\[
\s^2\times\R,\ \R^3,\ \mathrm{Nil},\ \mathrm{Sol},\ \Hip^2\times\R, \ \widetilde{\mathrm{SL}(2,\R)},\ \Hip^3.
\]
We refer to \cite{Deroin-Dupont} for more details on these constructions.

\vspace{1em}

\paragraph*{\textbf{Quasi-periodic Levi-flat hypersurfaces.}} Emblematic examples of these are linear hypersurfaces in a complex torus. In this case, the CR foliation is a linear foliation of a real torus of dimension three. In general, we will say that a Levi-flat hypersurface is \emph{quasi-periodic} if its CR foliation is defined (up to a double cover) by a closed differential form, or equivalently as the pre-image by a smooth map of a codimension 1 linear foliation on a torus. Examples of such hypersurfaces appear in line bundles of degree zero on a curve, and therefore in the neighbourhood of linearizable curves in complex surfaces. Arnol'd has provided such examples in the blow-up of $\C\Proj^2$ in nine generic points with respect to the Lebesgue measure \cite[§ 27]{Arnold}.

\vspace{1em}

\paragraph*{\textbf{Chaotic Levi-flat hypersurfaces.}} These are the hypersurfaces for which the CR foliation admits no transverse invariant measure. A transverse invariant measure is a family of Borel measures on the transversals to the foliation that are invariant under any holonomy map. To get an idea of these objects, a property satisfied by chaotic Levi-flat hypersurfaces is the existence of a leaf with hyperbolic holonomy, see \cite{Deroin-Kleptsyn}: some leaves wind around others with exponential rate. Important examples of such hypersurfaces are constructed in flat $\C\Proj^1$-bundles with real monodromy over a curve: just consider the corresponding $\R\Proj^1$-bundle. \footnote{In particular, when the monodromy of the $\C\Proj^1$-bundle is that given by the uniformization of the base curve $C$, the CR foliation of the previously considered hypersurface is analytically conjugated to the weak stable foliation of the geodesic flow on $C$ equipped with its conformal metric with curvature $-1$. Such a flow is the emblematic example of a chaotic flow, reinforcing the terminology used here.}

\vspace{1em}

These three classes do not describe all Levi-flat hypersurfaces in complex algebraic surfaces. For example, there are Levi-flat hypersurfaces containing algebraic curves, but that are not periodic. This is the case of the Levi-flat hypersurfaces constructed by Nemirovski\u{\i}, see \cite{Nemirovskii}. Furthermore, although we know no example of this kind, one could imagine that there are analytic Levi-flat hypersurfaces that admit a non atomic transverse invariant measure but that is singular with respect to the Lebesgue measure. Such hypersurfaces would be similar to quasi-periodic Levi-flat hypersurfaces, yet still different.

\vspace{1em}

An important part of this work concerns the study of the geometrical properties of the connected components of the complement of a Levi-flat hypersurface. These exterior components are by definition pseudoconvex and, as we shall see, it is interesting to understand how this local convexity property globalizes. Here are several ideas of global convexity that we present in ``increasing order'':

\begin{enumerate}
\item \label{itm:weakpsconv} Weakly pseudoconvex: there is a plurisubharmonic exhaustion function.
\item \label{itm:holosconv} Holomorphically convex: the holomorphic convex envelope of a compact is compact.
\item \label{itm:stpsconv} Strongly pseudoconvex: there is an exhaustion function that is strictly plurisubharmonic outside a compact.
\item \label{itm:stein} Stein: it is holomorphically convex and holomorphic functions separate points.
\end{enumerate}

It is well known that in the case of a domain with $\cal C^2$ boundary in a compact complex manifold, each of these properties implies the previous one. The implication \ref{itm:stein} $\Rightarrow$ \ref{itm:stpsconv} is valid for any complex manifold. This follows from the fact that a complex manifold is Stein if and only if it admits a strictly plurisubharmonic exhaustion function, according to a theorem of Grauert, see \cite{Grauert}. The implication \ref{itm:stpsconv} $\Rightarrow$ \ref{itm:holosconv} is a theorem due to Grauert \cite{Grauert} and Narasimhan \cite{Narasimhan}: it shows in particular that, by Remmert's reduction, a strongly pseudoconvex domain is a modification of a Stein variety. We see then that \ref{itm:stpsconv} and \ref{itm:stein} are very close: we pass from one to the other by a blow-up procedure.

\vspace{1em}

It turns out that global convexity properties of the exterior components of an algebraic Levi-flat hypersurface are intimately related to the dynamics of the CR foliation. For instance, a famous example of Grauert in \cite{Grauert-modifications} shows that the exterior of a linear hypersurface in a complex torus is always pseudoconvex, and it is holomorphically convex when the CR foliation is periodic. This phenomenon generalizes to quasi-periodic Levi-flat hypersurfaces. However, exterior components of quasi-periodic Levi-flat hypersurfaces are never strongly pseudoconvex. Indeed, in these components there are holomorphic curves or Levi-flat hypersurfaces arbitrarily close to their boundary, which doesn't allow the strictly subharmonicity of an exhaustion function near the boundary.

\vspace{1em}

The main result of this work states that \textit{external components of (analytic) chaotic Levi-flat hypersurfaces are strongly pseudoconvex}, i.e. modifications of a Stein space by the result of Grauert and Narasimhan mentioned above. This result was known in particular cases, including that of the hypothetical Levi-flat hypersurfaces of the complex projective plane by Takeuchi's theorem, see \cite{Takeuchi-projectif}, or flat bundles in $\C\Proj^1$ over a curve of genus $\geq 2$ whose monodromy is real, faithful and discrete (i.e. associated to the uniformization of a curve with the same genus of the base), according to a theorem of Diederich and Ohsawa \cite{Diederich-Ohsawa}. It is interesting to remark that our condition, although quite general, is not optimal: in the very interesting article \cite{Nemirovskii}, Nemirovski\u{\i} defines Levi-flat hypersurfaces in some elliptical surfaces that cut the surface into Stein domains. These Levi-flat hypersurfaces contain invariant elliptic curves and therefore are not chaotic. However they admit similar turbulence properties to those satisfied by chaotic Levi-flat hypersurfaces. It would be interesting to understand what is the optimal condition for this problem, but we do not pursue such a study in this paper.

\vspace{1em}

To prove our result, we analyze the geometry of neighbourhoods of chaotic Levi-flat hypersurfaces, building positive curvature metrics on the normal bundle to the foliation. Our construction is based on the work of Deroin and Kleptsyn in \cite{Deroin-Kleptsyn}, where the heat equation along the leaves of the foliation is considered: it is shown that under the chaotic hypothesis, the leaves converge exponentially quickly towards each other along brownian foliated trajectories. It is this phenomenon that allows us to build the mentioned metric on the normal bundle of the CR foliation. A theorem of Brunella is used then to deduce the strongly convexity of the exterior components of the hypersurface, see \cite{Brunella-ample}.

\vspace{1em}

Our study of global convexity properties of the exterior components of a Levi-flat hypersurface has several consequences. The first one is the following rigidity property: \textit{every analytic chaotic Levi-flat hypersurface is tangent to a singular complex algebraic foliation defined on the ambient surface.} This fact follows from extension techniques of analytic objects in modified Stein spaces, which are now classic, see \cite{Siu-Trautman, LinsNeto, Ivashkovich-bochner, Merker-Porten}, but that we detail in this text, particularly the delicate passage through critical levels of the plurisubharmonic exhaustion function. Thus, chaotic Levi-flat hypersurfaces appear as regular invariant sets of algebraic differential equations. So this is a new manifestation of the GAGA principle (``Géométrie Analytique Géométrie Algébrique'') in the context of Levi-flat hypersurfaces, but whose validity requires dynamic assumptions, unlike the conventional case. Indeed, without the chaotic hypothesis, Sad noticed that there are counterexamples: the quasi-periodic Levi-flat hypersurfaces constructed by Arnol'd in the blow-up of $\C\Proj^2$ in nine generic points are not tangent to a complex algebraic foliation.

\vspace{1em}

It seems reasonable to think that the algebraic differential equations constrained to preserve a real analytic set are sufficiently rare to be classifiable. As a comparison, in the theory of iteration, we know all the rational applications of a complex variable whose Julia sets are analytical: there are only Tschebychev polynomials and Blaschke products. This interesting problem, certainly a difficult one, will not be considered in this generality here. However, we can use the above-mentioned rigidity properties to study particular classes of Levi-flat hypersurfaces. This technique allows us to understand the structure of transversely affine analytic Levi-flat hypersurfaces, i.e. whose holonomy pseudogroup is given in an analytical coordinate by affine transformations of the form $x\in\R\mapsto ax+b\in\R$, with $a\in\R^*$ and $b\in\R$. We show, by combining our rigidity result with a theorem of Ghys, that \textit{a transversely affine Levi-flat hypersurface in a complex algebraic surface is quasi-periodic, or contains an algebraic curve}. This result echoes the recent work of Pereira and Cousin on the classification of singular transversely affine algebraic foliations of codimension 1, see \cite{Cousin-Pereira}.

\vspace{1em}

Our study of transversely affine analytic Levi-flat hypersurfaces has interesting consequences for the geometry of analytic Levi-flat hypersurfaces that are diffeomorphic to a hyperbolic torus bundle. These manifolds are bundles with toric fibers $\T^2=\R^2/\Z^2$ whose monodromy is isotopic to a linear automorphism $A\in\mathrm{GL}(2,\Z)$ with eigenvalues of norm different from 1. We conjecture that \textit{analytic Levi-flat hypersurfaces of complex algebraic surfaces that are diffeomorphic to a hyperbolic torus bundle are periodic}. We are not able to prove this conjecture, \footnote{Note here that Proposition 2 in \cite{Ghys-Sergiescu} would demonstrate this conjecture, but Étienne Ghys warned us about the existence of a counterexample to this statement, which he kindly let us reproduce in this text.} but we do have a result that goes in this direction: we show that \textit{analytic Levi-flat hypersurfaces in complex algebraic surfaces that are diffeomorphic to a hyperbolic torus bundle contain an elliptic curve}. This gives in particular an alternative proof of a recent theorem of Deroin and Dupont \cite{Deroin-Dupont} in the analytic case: \textit{analytic Levi-flat hypersurfaces in surfaces of general type have a non solvable fundamental group}. The proof of these results is immediate if one remembers the beautiful classification of codimension 1 analytical foliations on hyperbolic torus bundles, due to Ghys and Sergiescu \cite{Ghys-Sergiescu}. These foliations are of two types: either they are analytically conjugated to the suspension of the stable or unstable foliation associated to the matrix $A$ (in which case they are chaotic and transversely affine, which is excluded for the CR foliation of a Levi-flat in an algebraic surface), or they admit a compact leaf that is a torus and isotopic to a fiber of the bundle.

\vspace{1em}

\paragraph*{\textbf{Acknowledgements}} I would like to thank my advisors, Betrand Deroin and Christophe Dupont, and their institutions, for their help and the discussions about this work. I would also like to thank the referee for carefully reading this manuscript and the suggestions made. 

\section{Preliminaries}

In this section, we give some well-known definitions and results in order to fix notations. These notations will be kept throughout all the text.

\subsection{Foliations}

\begin{deff}\label{def:feuilletage3reel}
Let $M$ be a compact analytic 3-manifold. A foliation $\cal F$ by Riemann surfaces on $M$ is an atlas  $\cal A=\{(U_j,\varphi_j)\}_{j\in J}$ for $M$ that is maximal with respect to the following properties:
\begin{enumerate}
\item For all $j\in J$, $\varphi_j:U_j\to A_j\times B_j$ is an analytic diffeomorphism, where $A_j$ is an open disc in $\C$ and $B_j=]0,1[$.
\item If $(U_j,\varphi_j)$ and $(U_k,\varphi_k)$ belong to $\cal A$ with $U_j\cap U_k\neq\emptyset$, then
\[\varphi_{jk}:=\varphi_j\circ\varphi_k^{-1}:\varphi_k(U_j\cap U_k)\to\varphi_j(U_j\cap U_k)\]
is of the form
\[\varphi_{jk}(z_k,t_k)=(f_{jk}(z_k,t_k),g_{jk}(t_k)),\]
where $f_{jk}$ and $g_{jk}$, are analytic functions and $f_{jk}$ depends holomorphically of $z_k$.
\end{enumerate}
\end{deff}

We will note $N_{\cal F}$ and $T_{\cal F}$ the \emph{tangent} and \emph{normal bundle} of the foliation respectively, which are line bundles given by the transition functions $\left\{\frac{\d g_{jk}}{\d t_k}\right\}$ and $\left\{\frac{\d f_{jk}}{\d z_k}\right\}$ respectively. We define also the \emph{conormal bundle} to the foliation $N^*_{\cal F}$ by taking $\left\{(\frac{\d g_{jk}}{\d t_k})^{-1}\right\}$ as transition functions.

A \emph{metric} $m$ on these bundles will be given locally in the chart $(U_j,(z_j,t_j,\xi_j))$ by a collection of functions 
\[m_j(z_j,t_j,\xi_j)=e^{-\sigma_j(z_j,t_j)}\abs{\xi_j}^2\] where $\sigma_j$ is a $\Cinf$ function and the \emph{curvature} of $m$ will be given locally by the (1,1)-form
\[\Theta_{m_j}=\frac{i}{2\pi}\partial\bar\partial_{\cal F}\sigma_j:=\frac{i}{2\pi}\frac{\partial^2\sigma_j}{\partial z\partial\bar z}.\]

We can also define a foliation on a compact 3-manifold by local submersions, vector fields or differential forms. This follows from Frobenius' Theorem.

\begin{thm}[Frobenius' Theorem]
Let $M$ be a compact analytic 3-manifold. Let $E=\{E_p\}_{p\in M}$ be an analytic distribution of planes and $\{\omega_p\}_{p\in M}$ be a family of differential 1-forms such that $\ker(\omega_p)=E_p$. The following conditions are equivalent:
\begin{enumerate}
\item For all $p\in M$, there exists a submanifold $N$ of $M$, such that $\iota_*(T_pN)=E_p$, where $\iota:N\to M$ is the natural inclusion.
\item For all vector fields $X_1,X_2$ in $E$ we have that $[X_1,X_2]$ belongs to $E$, where $[\cdot,\cdot]$ is the Lie bracket of vector fields.
\item For all $p\in M$ the differential form $\d\omega_p$ vanishes on $\ker(\omega_p)$, i.e. $\d\omega\wedge\omega=0$.
\item For all $p\in M$ there exists a neighbourhood $U$ of $p$ and a differential 1-form $\eta$ on $U$ such that $\d\omega=\eta\wedge\omega$.
\item For all $p\in M$ there exists a neighbourhood $U$ of $p$ and functions $f,g:U\to\R$ such that $\omega=f\d g$.
\end{enumerate}
\end{thm}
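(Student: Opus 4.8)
The plan is to prove the five conditions equivalent by combining a block of purely algebraic equivalences with one genuinely analytic existence step. Note first that since $E$ is a field of planes in a $3$-manifold, each $\omega_p$ is a nonzero covector and the distribution is of codimension one; this is what lets me divide by $\omega$ inside the exterior algebra. I would start with $(2)\Leftrightarrow(3)$, which is pointwise and rests on the intrinsic formula
\[
\d\omega(X_1,X_2)=X_1\bigl(\omega(X_2)\bigr)-X_2\bigl(\omega(X_1)\bigr)-\omega\bigl([X_1,X_2]\bigr).
\]
For $X_1,X_2$ sections of $E=\ker\omega$ the first two terms vanish, so $\d\omega(X_1,X_2)=-\omega([X_1,X_2])$; hence $E$ is closed under the bracket exactly when $\d\omega$ vanishes on $\ker\omega$, and this last condition is equivalent to $\d\omega\wedge\omega=0$ by the elementary fact that, for a nowhere-zero $1$-form, a $2$-form annihilates $\ker\omega$ iff it lies in the ideal generated by $\omega$. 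For $(3)\Leftrightarrow(4)$ I would complete $\omega$ to a local coframe $(\omega,\alpha,\beta)$, write $\d\omega=A\,\alpha\wedge\beta+\beta\wedge(\cdots)+\omega\wedge(\cdots)$, and observe that $\d\omega\wedge\omega=A\,\alpha\wedge\beta\wedge\omega$; thus $(3)$ forces $A=0$, after which the surviving terms regroup as $\eta\wedge\omega$, while conversely $(4)$ gives $\d\omega\wedge\omega=\eta\wedge\omega\wedge\omega=0$.

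Next I would dispatch the cheap links to $(1)$ and $(5)$. For $(5)\Rightarrow(4)$: if $\omega=f\,\d g$ with $f$ nonvanishing (forced, since $\omega$ is nowhere zero), then $\d\omega=\d f\wedge\d g=(\d f/f)\wedge\omega$, so $(4)$ holds with $\eta=\d f/f$. For $(5)\Rightarrow(1)$: one has $\ker\omega=\ker\d g$, so the regular level sets $\{g=\text{const}\}$ are integral submanifolds tangent to $E$. For $(1)\Rightarrow(2)$: two sections of $E$ restrict, along each integral submanifold $N$, to vector fields tangent to $N$, and the Lie bracket of fields tangent to a submanifold stays tangent, whence $[X_1,X_2]_p\in\iota_*(T_pN)=E_p$. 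Together with the algebraic block, all of this reduces the theorem to producing, from involutivity alone, the local normal form $(5)$.

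The crux is therefore the implication $(2)\Rightarrow(5)$, which carries the analytic content and which I would prove by the classical straightening argument. Fix $p$ and an analytic frame $X_1,X_2$ of $E$ near $p$. Since $X_1(p)\neq0$, the flow-box theorem for a single analytic vector field gives coordinates $(x_1,x_2,x_3)$ with $X_1=\partial_{x_1}$. Subtracting a multiple of $X_1$ I may assume $X_2$ has no $\partial_{x_1}$ component, and (possibly renaming $x_2,x_3$ and rescaling) $X_2=\partial_{x_2}+b\,\partial_{x_3}$. Involutivity $(2)$ now forces $[X_1,X_2]=(\partial_{x_1}b)\,\partial_{x_3}$ to lie in $E=\mathrm{span}(X_1,X_2)$; but this vector has vanishing $\partial_{x_1}$ and $\partial_{x_2}$ components, so it must be zero, i.e. $b$ does not depend on $x_1$. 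Then $X_2$ is an analytic vector field in the two variables $(x_2,x_3)$ only, and a planar straightening in those variables alone, leaving $x_1$ untouched, yields coordinates $(x_1,u,v)$ with $X_1=\partial_{x_1}$, $X_2=\partial_u$, and $E=\ker(\d v)$. In these coordinates $\omega$ annihilates $\partial_{x_1}$ and $\partial_u$, so $\omega=h\,\d v$ with $h$ nonvanishing, which is precisely $(5)$ with $f=h$ and $g=v$. I expect the whole difficulty to be concentrated here: the delicate point is the inductive reduction by which involutivity is used to force $b$ to be independent of $x_1$, so that the second straightening can be carried out in one lower dimension without disturbing the first coordinate. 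In the real-analytic category the only extra input needed is that the flow of an analytic vector field is again analytic, so no obstacle arises beyond the smooth case.
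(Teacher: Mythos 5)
The paper states this Frobenius theorem in its preliminaries as classical background and gives no proof of it, so there is nothing of the author's to compare your argument against; what can be judged is the proof itself, and it is correct. Your organization is the standard one for the codimension-one case: the identity $\d\omega(X_1,X_2)=X_1(\omega(X_2))-X_2(\omega(X_1))-\omega([X_1,X_2])$ gives $(2)\Leftrightarrow(3)$; the coframe computation gives $(3)\Leftrightarrow(4)$; the links $(5)\Rightarrow(4)$, $(5)\Rightarrow(1)$, $(1)\Rightarrow(2)$ are as cheap as you claim; and your implications do close up into the equivalence of all five conditions. The analytic core $(2)\Rightarrow(5)$ — straighten $X_1$, normalize $X_2$ to $\partial_{x_2}+b\,\partial_{x_3}$, use involutivity to force $\partial_{x_1}b=0$, then straighten $X_2$ in the two remaining variables without touching $x_1$ — is the classical argument and is carried out correctly; in the real-analytic category it goes through verbatim since analytic flows are analytic. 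Two small points to tighten. First, in the coframe step the clean decomposition is $\d\omega=A\,\alpha\wedge\beta+\omega\wedge\gamma$ for some $1$-form $\gamma$; your middle term $\beta\wedge(\cdots)$ is redundant and risks double-counting the $\alpha\wedge\beta$ component, though the conclusion $\d\omega\wedge\omega=A\,\alpha\wedge\beta\wedge\omega$ is what matters and is right. Second, in $(1)\Rightarrow(2)$ you implicitly use the intended reading of condition $(1)$, namely that $N$ is tangent to $E$ at \emph{every} point of $N$ (an integral submanifold), not merely at $p$ as the paper's formulation literally says; with the literal pointwise reading, $(1)$ would hold vacuously for any distribution, so your reading is the only sensible one, but it deserves an explicit remark.
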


If one of the conditions is verified, we will say that the distribution of planes $E$ is Frobenius integrable.

Let $X$ be a complex surface. We define a \emph{holomorphic foliation} $\cal G$ on $X$ in a similar way as Definiton \ref{def:feuilletage3reel}: we change the transversal real coordinates $t_j\in]0,1[$ for complex coordinates $w_j\in\D$ and functions $f_{jk}, g_{jk}$ become holomorphic.

We define also in a similar way the \emph{normal bundle} $N_{\cal G}$, the \emph{conormal bundle} $N_{\cal G}^*$ and a metric $m$ on these bundles. The \emph{curvature} of $m$ will be given locally by the (1,1)-form
\[\Theta_{m_j}=\frac{i}{2\pi}\partial\bar\partial_X\sigma_j,\]
where $\partial_X:=\partial_z+\partial_w$ and $\bar\partial_X:=\partial_{\bar z}+\partial_{\bar w}$.

Propositions \ref{prop:defparsubmersions}, \ref{prop:deffeuilletageformediff} and \ref{prop:Frobeniusholo} are holomorphic versions of Frobenius' Theorem.
 
\begin{prop}\label{prop:defparsubmersions}
Let $\cal G$ be a non-singular holomorphic foliation. There exists a cover $\{U_j\}_{j\in J}$ of $X$ by open sets and a family of submersions $g_j:U_j\to\C$, such that if $U_j\cap U_k\neq\emptyset$ then $g_j=H_{jk}\circ g_k$, where $H_{jk}:g_k(U_j\cap U_k)\subset\C\to\C$ is a holomorphic function.
\end{prop}

\begin{deff}
A singular holomorphic foliation on $X$ is given by a non-singular holomorphic foliation on $X$ defined away from a finite set of points. We note $\sing(\cal G)$ this finite set and we call it the singular set of the foliation.
\end{deff}

The following proposition characterizes singular holomorphic foliations using holomorphic 1-forms. We remark that on a complex surface every holomorphic 1-form $\omega$ is integrable because $\d\omega\wedge\omega$ is a 3-form.

\begin{prop}\label{prop:deffeuilletageformediff}
Let $\cal G$ be a singular holomorphic foliation on $X$. There exists an open cover $\{U_j\}_{j\in J}$ of $X$ and a collection of holomorphic 1-forms $\omega_j$ on $U_j$ with isolated zeros such that if $U_j\cap U_k\neq\emptyset$ then
\[\omega_j=h_{jk}\,\omega_k\, ,\text{ where } h_{jk}\in\cal O^*(U_j\cap U_k).\]
Moreover, the set $\sing(\cal G)$ is equal to $\cup_{j\in J}\{\omega_j=0\}$.
\end{prop}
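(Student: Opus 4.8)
The plan is to build the forms $\omega_j$ separately on the regular locus and on small balls around each singular point, and then to check that they glue by units. On the open set $X\setminus\sing(\cal{G})$ the foliation is non-singular, so Proposition~\ref{prop:defparsubmersions} provides a cover $\{U_j\}$ together with submersions $g_j:U_j\to\C$ satisfying $g_j=H_{jk}\circ g_k$. First I would set $\omega_j:=\d g_j$. Since $g_j$ is a submersion into $\C$, its differential never vanishes, so $\omega_j$ is a nowhere-vanishing holomorphic $1$-form whose kernel is exactly the tangent line to $\cal{G}$. Differentiating the relation $g_j=H_{jk}\circ g_k$ gives $\omega_j=(H_{jk}'\circ g_k)\,\omega_k$; because $\omega_j$ and $\omega_k$ are both nowhere zero, the factor $h_{jk}:=H_{jk}'\circ g_k$ is a nowhere-vanishing holomorphic function, i.e. $h_{jk}\in\cal{O}^*(U_j\cap U_k)$. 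This settles the statement away from the finitely many singular points.

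The heart of the argument is the extension of the foliation across a point $p\in\sing(\cal{G})$. I would fix a coordinate ball $B\ni p$, with coordinates $(z,w)$, chosen small enough to contain no other singular point. On $B\setminus\{p\}$ the projectivized tangent line of $\cal{G}$ defines a genuinely holomorphic map (the ``slope'' of the foliation) $s:B\setminus\{p\}\to\Proj^1$: it is well defined globally because, the local defining forms differing by units, their common kernel line is unambiguous, and it is holomorphic precisely because the foliation is non-singular on $B\setminus\{p\}$. Since $\{p\}$ has codimension $2$, the removability of codimension-two sets for meromorphic functions (Levi's extension theorem) lets me extend $s$ to a meromorphic map $B\to\Proj^1$. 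Working in the local ring $\cal{O}_{B,p}$, which is a unique factorization domain, I write this map as $[P:Q]$ with $P,Q$ holomorphic and coprime, and I define $\omega_p:=Q\,\d z-P\,\d w$, whose kernel has the prescribed slope $[P:Q]$ and which therefore defines $\cal{G}$ on $B\setminus\{p\}$.

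It remains to see that $\omega_p$ has isolated zeros and that everything glues correctly. Coprimality of $P$ and $Q$ forces their common zero set to have codimension $2$, hence to be discrete; after shrinking $B$ this set is contained in $\{p\}$, because at any other point $q\in B\setminus\{p\}$ the foliation is regular, so the slope map $s$ is already holomorphic at $q$ and $P,Q$ cannot both vanish there. Thus the only zero of $\omega_p$ inside $B$ is $p$ itself, and it is indeed a zero: if $\omega_p(p)\neq 0$ the foliation would extend non-singularly across $p$, contradicting $p\in\sing(\cal{G})$. On an overlap $B\cap U_j$, which lies in the regular locus, the forms $\omega_p$ and $\omega_j=\d g_j$ are both nowhere vanishing and share the same kernel, so their ratio is a unit; the same holds on the overlap of two such balls, shrunk to be disjoint from each other's centre. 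This produces a collection $\{\omega_j\}\cup\{\omega_p\}_{p\in\sing(\cal{G})}$ indexed over a cover of all of $X$, with the required unit transition functions. Finally, since the forms on the regular charts are nowhere zero while each $\omega_p$ vanishes exactly at $p$, the union of their zero sets is precisely $\sing(\cal{G})$.

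The main obstacle I anticipate is the delicate passage across the singular point: one must argue that the globally defined slope map extends meromorphically, rather than merely as a possibly multivalued direction field, and, crucially, that selecting coprime numerator and denominator yields a $1$-form whose zero locus is isolated and coincides with the singular point. This is exactly where the codimension-two removability and the unique factorization in $\cal{O}_{B,p}$ do the work; the regular-locus construction and the gluing are comparatively routine.
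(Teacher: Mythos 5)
Your proof is correct, but there is nothing in the paper to compare it against line by line: the paper states this proposition without proof, as a known characterization, and only its regular-locus half appears explicitly, as Proposition~\ref{prop:Frobeniusholo}, which is declared immediate. Your regular-locus step (differentiating the submersions of Proposition~\ref{prop:defparsubmersions}) is exactly that proposition; the real content is your treatment of a singular point, where you extend the slope of the foliation meromorphically across the puncture, reduce it to coprime form $P/Q$ in the UFD $\cal O_{B,p}$, and take $\omega_p=Q\,\d z-P\,\d w$, whose zero set is isolated by coprimality. This is the standard argument, and it is worth noting that it is precisely the technique the paper itself deploys later, in Section~\ref{sec:extensionglobale}: the proof of Lemma~\ref{lem:extensionvoisinage} extends a foliation through a Hartogs figure by extending the meromorphic slope via Theorem~\ref{thm:levi} and then clearing denominators to obtain a form with isolated zeros, and Proposition~\ref{prop:extensionsurA} runs the same scheme across the exceptional set; so your local argument anticipates the paper's global one. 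Two remarks. First, your appeal to ``removability of codimension-two sets'' is the classical form of Levi's theorem; it is not literally the Hartogs-figure statement of Theorem~\ref{thm:levi}, but follows from it by placing a figure inside $B\setminus\{p\}$ whose envelope contains $p$ (as in Proposition~\ref{prop:boitesdehartogs}), so this is a matter of citation, not a gap. Second, your final step --- that $\omega_p$ genuinely vanishes at $p$ --- cannot be deduced from the paper's literal definition of $\sing(\cal G)$ as ``some finite set off which the foliation is non-singular'': the equality $\sing(\cal G)=\cup_j\{\omega_j=0\}$ is true only if $\sing(\cal G)$ is taken to be minimal, i.e.\ if the foliation does not extend non-singularly across any of its points. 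You identify this implicit convention and use it correctly, which is the right reading of the statement rather than a defect of your proof.
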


The following proposition is immediate.

\begin{prop}\label{prop:Frobeniusholo}
Let $\cal G$ be a non-singular holomorphic foliation on $X$. Let $g_j:U_j\to\C$ be a familly of holomorphic submersions such that $g_j=H_{jk}\circ g_k$ as in Proposition \ref{prop:defparsubmersions}. Then we can take $\omega_j=\d g_j$ and $h_{jk}=H'_{jk}$ in Proposition \ref{prop:deffeuilletageformediff}.
\end{prop}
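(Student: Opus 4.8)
The plan is to verify directly that the proposed data $\omega_j=\d g_j$ and $h_{jk}=H'_{jk}\circ g_k$ satisfy the three requirements of Proposition~\ref{prop:deffeuilletageformediff}: that each $\omega_j$ is a holomorphic $1$-form with isolated zeros, that $\omega_j=h_{jk}\,\omega_k$ on the overlaps with $h_{jk}\in\cal O^*(U_j\cap U_k)$, and that $\cup_j\{\omega_j=0\}$ recovers $\sing(\cal G)$.

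First I would observe that $\omega_j=\d g_j$ is holomorphic because $g_j$ is, and that it is in fact nowhere vanishing: since $g_j\colon U_j\to\C$ is a submersion, its differential is surjective (hence nonzero) at every point of $U_j$, so $\omega_j$ has empty, and in particular isolated, zero set. This already forces $\cup_j\{\omega_j=0\}=\emptyset$, consistent with $\cal G$ being non-singular, so the last requirement holds trivially.

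Next I would differentiate the cocycle relation $g_j=H_{jk}\circ g_k$ supplied by Proposition~\ref{prop:defparsubmersions}. The chain rule gives
\[
\omega_j=\d g_j=\d(H_{jk}\circ g_k)=(H'_{jk}\circ g_k)\,\d g_k=(H'_{jk}\circ g_k)\,\omega_k,
\]
so that setting $h_{jk}:=H'_{jk}\circ g_k$ (the abbreviation $h_{jk}=H'_{jk}$ in the statement being understood as precomposition with $g_k$) yields exactly $\omega_j=h_{jk}\,\omega_k$.

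The only point requiring a short argument — and the closest thing to an obstacle here — is that $h_{jk}$ lies in $\cal O^*(U_j\cap U_k)$, i.e.\ is holomorphic and nowhere zero. Holomorphicity is clear, as $H_{jk}$ and $g_k$ are holomorphic. For the non-vanishing I would argue from the identity $\omega_j=(H'_{jk}\circ g_k)\,\omega_k$: since both $\omega_j$ and $\omega_k$ are nowhere zero, the scalar factor $H'_{jk}\circ g_k$ cannot vanish at any point of $U_j\cap U_k$. Equivalently, the $H_{jk}$ form a transition cocycle with $H_{jk}\circ H_{kj}=\mathrm{id}$, hence are local biholomorphisms and satisfy $H'_{jk}\neq 0$. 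This completes the verification and shows that the data $(\omega_j,h_{jk})$ are admissible in Proposition~\ref{prop:deffeuilletageformediff}, as claimed.
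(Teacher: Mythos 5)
Your verification is correct and is exactly the argument the paper has in mind: the paper states this proposition without proof (calling it ``immediate''), and the content is precisely the chain rule computation $\d g_j=(H'_{jk}\circ g_k)\,\d g_k$ together with the observation that submersions have nowhere-vanishing differentials, which is what you carry out. Your extra care in checking that $h_{jk}=H'_{jk}\circ g_k$ is nowhere zero (either from the identity $\omega_j=h_{jk}\,\omega_k$ between nonvanishing forms, or from the cocycle relation forcing the $H_{jk}$ to be local biholomorphisms) is sound and fills in the one detail the word ``immediate'' glosses over.
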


To finish this section we define the notion of meromorphic 1-form defining a singular foliation on a complex surface $X$.

\begin{deff}\label{def:feuilletageformemero}
Let $\cal G$ be a singular holomorphic foliation on $X$ defined by a collection of holomorphic 1-forms $\{\omega_j\}_{j\in J}$ as in Proposition \ref{prop:deffeuilletageformediff}. We say that a meromorphic 1-form $\omega$ on $X$ defines $\cal G$ if, for all $j\in J$ and for all $p\in U_j$ that is not a zero nor a pole of $\omega$, we have $\ker\omega(p)=\ker\omega_j(p)$.
\end{deff}

Finally, we explain how the existence of meromorphic sections of the conormal bundle $N_{\cal G}^*$ allows us to construct meromorphic 1-forms on $X$ that define the foliation in the sense of Definition \ref{def:feuilletageformemero}. This will be useful in Section \ref{sec:feuilletagesaffinesdeg}.

\begin{prop}\label{prop:sectmerom}
Let $X$ be a complex surface and let $\cal G$ be a singular holomorphic foliation on $X$. Let $\{U_j\}_{j\in J}$ be an open cover of $X\setminus\sing(\cal G)$ and $g_j:U_j\to\C$ be submersions that define $\cal G$ as in Proposition \ref{prop:defparsubmersions}. Let $f=(f_j)$ be a meromorphic section of $N^*_{\cal G}$. Then the collection of meromorphic 1-forms $\omega_j:=f_j\d g_j$ glues into a meromorphic 1-form on $X$ that defines the foliation $\cal G$ in the sense of Definition \ref{def:feuilletageformemero}.
\end{prop}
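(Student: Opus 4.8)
The plan is to glue the forms $\omega_j$ over the regular locus $X\setminus\sing(\cal G)$, then to extend the result meromorphically across the finitely many singular points, and finally to check the defining property of Definition \ref{def:feuilletageformemero}. First I would observe that $\d g_j$ is a nowhere-vanishing holomorphic section of $N^*_{\cal G}$ over $U_j$: since $g_j$ is a submersion whose fibres are the leaves of $\cal G$, its differential $\d g_j$ annihilates exactly the directions tangent to the foliation, so it trivialises the conormal bundle over $U_j$. Differentiating the cocycle relation $g_j=H_{jk}\circ g_k$ of Proposition \ref{prop:defparsubmersions} gives $\d g_j=(H'_{jk}\circ g_k)\,\d g_k$ on $U_j\cap U_k$, and the factors $H'_{jk}\circ g_k\in\cal O^*$ are precisely the transition functions of $N_{\cal G}$ in these frames, so that $(H'_{jk}\circ g_k)^{-1}$ are those of $N^*_{\cal G}$. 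Since $f=(f_j)$ is a section of $N^*_{\cal G}$, its components obey $f_j=(H'_{jk}\circ g_k)^{-1}f_k$; combining this with the transformation law of $\d g_j$, the two factors cancel and we get $\omega_j=f_j\,\d g_j=f_k\,\d g_k=\omega_k$. Thus the $\omega_j$ agree on every overlap and glue to a meromorphic $1$-form $\tilde\omega$ on $X\setminus\sing(\cal G)$.

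Next I would extend $\tilde\omega$ across $\sing(\cal G)$, which by definition is a finite set of points, hence an analytic subset of codimension $2$ in the surface $X$. Working in local holomorphic coordinates $(z,w)$ on a ball $B$ centred at a singular point $p$, I would write $\tilde\omega=A\,\d z+C\,\d w$ with $A,C$ meromorphic on $B\setminus\{p\}$, and invoke Levi's extension theorem — a meromorphic function on the complement of an analytic set of codimension $\geq 2$ extends meromorphically across it — to extend $A$ and $C$, and hence $\tilde\omega$, to all of $B$. By uniqueness of meromorphic continuation these local extensions are mutually compatible and compatible with $\tilde\omega$, so they produce a global meromorphic $1$-form $\omega$ on $X$.

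Finally I would verify that $\omega$ defines $\cal G$. By Proposition \ref{prop:Frobeniusholo} the foliation may be presented by the local $1$-forms $\d g_j$, and on $U_j$ the zeros and poles of $\omega=f_j\,\d g_j$ coincide with those of $f_j$, because $\d g_j$ is holomorphic and nowhere vanishing there. Consequently, at any $p\in U_j$ that is neither a zero nor a pole of $\omega$ we have $f_j(p)\in\C^*$ and $\ker\omega(p)=\ker\d g_j(p)$, which is the tangent line to the leaf of $\cal G$ through $p$; this is exactly the condition of Definition \ref{def:feuilletageformemero}. The gluing and this last verification are formal, being forced by the fact that $\d g_j$ is a conormal frame whose cocycle is built into the definition of $N^*_{\cal G}$. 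The only step carrying genuine analytic content is the extension across $\sing(\cal G)$, and the point to be careful about there is simply that the singular set has codimension two, so that Levi's theorem — rather than a one-variable removable singularity argument — is what applies.
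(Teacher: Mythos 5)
Your proof is correct. In fact the paper states this proposition without any proof (it is presented as immediate, in the same spirit as Proposition \ref{prop:Frobeniusholo}), and your argument supplies exactly the routine verification left to the reader: the cancellation of the conormal cocycle $(H'_{jk}\circ g_k)^{-1}$ against the factor $H'_{jk}\circ g_k$ relating $\d g_j$ to $\d g_k$, the meromorphic extension of the coefficients across the finite set $\sing(\cal G)$ (correctly via Levi's theorem rather than Hartogs, since the $f_j$ are only meromorphic), and the kernel identification away from the zeros and poles of $\omega$.
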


\subsection{Levi-flat hypersurfaces}\label{sec:LeviPlats}

Levi-flat hypersurfaces are real hypersurfaces of complex surfaces that are foliated by Riemann surfaces. The complex structure of the leaves is of course compatible with the complex structure of the ambient surface. Their name is justified by the vanishing of the Levi-form.

\begin{deff}\label{def:leviplat}
Let $X$ be a complex surface and $M\subset X$ be a compact analytic 3-manifold. Let $T_pM\subset T_pX$ be the tangent space to $M$ in the point $p$. We say that $M$ is a Levi-flat hypersurface if the distribution of planes $E_p:=T_pM\cap iT_pM$ on $M$ is Frobenius integrable. We note $\cal F$ the corresponding foliation by Riemann surfaces on $M$ and we call it the Cauchy-Riemann foliation, or CR foliation, of $M$.
\end{deff}

In the following, we give some examples that illustrate each of the notions of Levi-flat hypersurfaces mentioned in the introduction: periodic, quasi-periodic and chaotic.

\begin{exe}
Our first example is a periodic Levi-flat in an algebraic surface that is diffeomorphic to a hyperbolic torus bundle. Here is the idea of its construction, which is detailed in \cite{Deroin-Dupont}.

We consider a pencil of cubics in $\C\Proj^2$, for example the pencil of cubics that in affine coordinates are defined by
\[
C_t:=\overline{\{y^2=x(x-1)(x-t)\}},
\]
where $t\in\C\Proj^1$. These cubics are smooth if $t\neq\{0,1,\infty\}$. They intersect all in nine distinct points $p_1,\ldots,p_9$ of the projective complex plane. Blowing-up $\C\Proj^2$ in these nine points, we obtain a rational surface $X$ provided with a singular fibration whose fibers are the strict transforms $\hat{C_t}$ of the curves $C_t$ in $X$.

Let us now move the variable $t$ along a loop in $\C\Proj^1\setminus\{0,1,\infty\}$. The union of curves $\hat{C_t}$ in $X$ is an immersed Levi-flat hypersurface that is diffeomorphic to a torus bundle by construction. If the curve described by $t$ is complicated enough, then the monodromy of this bundle is hyperbolic. We obtain a periodic Levi-flat hypersurface diffeomorphic to a hyperbolic torus bundle in a rational surface, but a priori this hypersurface is only immersed if the curve described by $t$ is not simple.

To obtain an embedded Levi-flat hypersurface of this type, we have to take the preimage of this construction by a cover over $\C\Proj^1$, whose only ramification points are $0,1,\infty$, and in which the curve described by $t$ transforms into a simple loop.
\end{exe}

\begin{exe}
Our second example is a perturbation of the first one due to Arnol'd, see \cite{Arnold}.

In order to describe it, we will need the following notion: a curve $C$ in a complex surface $X$ is linearizable if the inclusion of $C$ in its normal bundle extends to a biholomorphism from a neighbourhood of $C$ to a neighbourhood of its image in the normal bundle. If a smooth curve with trivial self-intersection in a complex surface is linearizable, then there exist quasi-periodic Levi-flat hypersurfaces arbitrarily near $C$. Indeed, it suffices to consider the zero section of the normal bundle: we can take then the levels of the unique flat metric on it.

If we choose nine points $p_1,\ldots,p_9$ generically in $\C\Proj^2$, then there exists a unique cubic $C$ passing through these nine points. This cubic is of self-intersection 9, so the strict transform $\hat{C}$ of $C$ in the blow-up $X$ of $\C\Proj^2$ along the points $p_1,\ldots,p_9$ has trivial self-intersection. In \cite{Arnold} Arnol'd shows that this curve is linearizable. Then there are quasi-periodic Levi-flat hypersurfaces in its neighbourhood. A remark of Sad shows that these hypersurfaces are never tangent to an algebraic foliation of $X$.
\end{exe}

\begin{exe}
Let $\Sigma$ be a compact Riemann surface of genus $\geq2$. Let $\rho:\pi_1(\Sigma)\to\mathrm{PSL}(2,\R)$ be a representation of the fundamental group of $\Sigma$ in the group of real Moebius transformations. We define a complex surface $X$ as the quotient of $\D\times\C\Proj^1$ by the relation $(z,t)\sim(\gamma\cdot z,\rho(\gamma)t)$ for all $\gamma\in\pi_1(\Sigma)$. We note $X:=\Sigma\ltimes_\rho\C\Proj^1$. Let $p:\D\times\C\Proj^1\to X$ be the projection to the quotient space $X$. Then $M:=p(\D\times\s^1)=\Sigma\ltimes_\rho\R\Proj^1$ is a Levi-flat hypersurface in $X$. As we will see, this Levi-flat hypersurface is chaotic for generic representations $\rho$.
\end{exe}

\section{Dynamics of Levi-flat hypersurfaces}\label{sec:dynamique}

In this section we will prove that if $\cal F$ is a foliation by Riemann surfaces on a compact 3-manifold without transverse invariant measure, then the normal bundle $N_{\cal F}$ of the foliation has a metric with positive curvature. It is the crucial part of this work.

\subsection{Dynamics on foliated 3-manifolds}

Let $M$ be a compact 3-manifold and $\cal F$ a codimension 1 foliation on $M$. Recall that a \emph{transverse invariant measure} is a family of finite measures $\{\mu_{\cal T}\}_{\cal T}$ on each transversal $\cal T$ of the foliation $\cal F$ such that every holonomy map $h_\gamma:\mathrm{dom}(h_\gamma)\subset\cal T_0\to\cal T_1$ verifies $\mu(h_\gamma (B))=\mu(B)$ on the borelian subsets of $\mathrm{dom}(h_\gamma)$. An invariant transverse measure is \emph{ergodic} if it cannot be written as a convex non trivial combination of two different transverse invariant measures.

It is not common for a foliation to have a transverse invariant measure. We have the following result in the case of Fuchsian representations.

\begin{exe}\label{exe:fibre_plat_monodromie_reelle}
Let $\Sigma$ be a Riemann surface and $\rho:\pi_1(\Sigma)\to\mathrm{PSL}(2,\R)$ a representation of the fundamental group of $\Sigma$. Define $M:=\Sigma\ltimes_\rho\R\Proj^1$. Then there exists a transverse invariant measure on $M$ if and only if the representation $\rho$ is elementary, i.e. the image of $\pi_1(\Sigma)$ by $\rho$ is conjugated to
\begin{itemize}
\item a subgroup of $\mathrm{SO}(2)$, or
\item a subgroup of the affine group, or
\item a subgroup of the group generated by $z\mapsto\frac 1z$ and $z\mapsto az$, where $a\in\R^*$.
\end{itemize}
In the first case the invariant transverse measure is the Lebesgue measure, in the other cases it is a sum of two Dirac masses. See for example the book of Beardon \cite[§5.1]{Beardon}.
\end{exe}

One can't always find invariants measures for a foliation, but we can always find \emph{harmonic measures}, see \cite[Chapter 2.1]{FoliationsII}. These were introduced by Garnett \cite{Garnett}. If $\mathbf g$ is a metric on the tangent bundle $T_{\cal F}$ of the foliation with laplacian $\Delta_{\mathbf g}$, a probability measure $\mu$ on $M$ is $\mathbf g$-harmonic if for all $f\in\cal C_{\cal F}^\infty(M)$ we have $\int_M\Delta_{\mathbf g}f(x)\,\d\mu(x)=0.$

These measures generalize in some sense transverse invariant measures: given a harmonic measure, we can construct a family of transverse measures that are, on average, invariant by holonomies with respect to the heat distribution on the leaves. We can construct these measures by hand in some cases, see \cite{FoliationsII} for the case of hyperbolic torus bundles and \cite{Garnett} for the case of unit tangent bundle of hyperbolic Riemann surfaces.

We can associate to an ergodic harmonic measure a number, called Lyapunov exponent, which measures the exponential rate of separation of leaves. In \cite{Deroin-Kleptsyn} Deroin and Kleptsyn prove that for an ergodic $\mathbf g$-harmonic measure $\mu$ on $M$ the \emph{Lyapunov exponent} of $\mu$ is
\[\lambda(\mu)=-\int_M\varphi\,\d\mu,\]
where $\varphi:M\to\R$ is the function verifying $\Theta_{m_{\cal F}}=\varphi\cdot\vol_{\mathbf g}$ and $m_{\cal F}$ is a metric on the normal bundle $N_{\cal F}$.

In \cite{Deroin-Kleptsyn} also, they prove that the Lyapunov exponent of an invariant measure is zero and that it is negative when the foliation doesn't have an invariant measure:

\begin{thm}\cite{Deroin-Kleptsyn}\label{thm:corothmb}
Let $M$ be a compact 3-manifold foliated by Riemann surfaces and $\mathbf g$ a metric on $T_{\cal F}$. If $\cal F$ doesn't have a transverse invariant measure then $\cal F$ has a finite number of minimal sets $\cal M_1,\ldots,\cal M_r$, each supports a unique $\mathbf g$-harmonic measure $\nu_i$ and each Lyapunov exponent $\lambda(\nu_i)$ is $<0$. Every harmonic measure on $M$ is a convex combination of $\nu_1,\ldots,\nu_r$.
\end{thm}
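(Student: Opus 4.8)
The plan is to follow the dynamical approach of Deroin and Kleptsyn, whose starting point is the foliated heat diffusion. First I would settle the existence and structure of harmonic measures. By Garnett's theory the leafwise heat semigroup $D_t$ acts on the weak-$*$ compact space of probability measures on the compact manifold $M$; since $D_t$ is Feller, a standard compactness argument produces fixed points, i.e. $\mathbf g$-harmonic measures, and the set of such measures is convex and weak-$*$ compact. The ergodic decomposition theorem then lets me disintegrate any harmonic measure as an integral of \emph{ergodic} harmonic measures, so the whole problem reduces to understanding the ergodic ones. This already reduces the last claim (every harmonic measure is a convex combination of the $\nu_i$) to identifying the ergodic harmonic measures with the $\nu_i$.

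Second, and this is the heart of the matter, I would prove that every ergodic harmonic measure $\mu$ satisfies $\lambda(\mu)<0$ in the absence of a transverse invariant measure. I would use that $\lambda(\mu)$, besides being given by the formula $\lambda(\mu)=-\int_M\varphi\,\d\mu$, equals the almost-sure exponential rate $\lim_{t\to\infty}\tfrac1t\log\abs{D_t^\omega}$ of the transverse holonomy derivative along Brownian foliated trajectories $\omega$. The transverse holonomy is a one-dimensional random dynamical system, and I would first show $\lambda(\mu)\le 0$: a positive exponent would force uniform transverse expansion, and then a limit of transverse push-forwards along trajectories (a Furstenberg--Khasminskii / Ces\`aro-average construction) would yield a transverse invariant measure, contradicting the hypothesis. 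The equality case $\lambda(\mu)=0$ is then excluded by a rigidity argument: when the drift of $\log\abs{D_t^\omega}$ vanishes the associated process becomes a genuine martingale, and a symmetrization of its stationary distribution produces a holonomy-invariant measure, again a contradiction. Hence $\lambda(\mu)<0$.

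Third, I would convert negativity of the exponent into contraction: $\lambda(\mu)<0$ means that two nearby Brownian trajectories on the leaves approach each other exponentially fast almost surely (leaf synchronization). This contraction forces each ergodic harmonic measure to be concentrated on a single minimal set and to be the \emph{unique} harmonic measure carried by that minimal set, since any two stationary transverse distributions must coincide after contraction; this gives the unique $\nu_i$ on each $\cal M_i$. Finiteness then follows: each minimal set $\cal M_i$ carrying a harmonic measure attracts a set of trajectories of positive mass, with a lower bound on that mass coming from the uniform exponential contraction rate near $\cal M_i$ together with compactness of $M$; since distinct minimal sets are disjoint closed sets with disjoint basins and $M$ carries total harmonic mass one, only finitely many $\cal M_1,\ldots,\cal M_r$ can occur. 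Combining this with the ergodic decomposition of the first step yields that every harmonic measure is a convex combination of $\nu_1,\ldots,\nu_r$.

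I expect the second step, the negativity of the Lyapunov exponent, to be the main obstacle. The inequality $\lambda(\mu)\le 0$ and, above all, the rigidity of the equality case require a delicate analysis of the transverse diffusion and the explicit construction of invariant measures out of stationary data; this is precisely the technical core of the work of Deroin and Kleptsyn, and the remaining steps are comparatively formal consequences of the contraction it provides.
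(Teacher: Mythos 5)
The first thing to note is that the paper does not prove this statement at all: it is imported verbatim from \cite{Deroin-Kleptsyn} (the theorem is stated with a citation and no proof follows), and it is then used as a black box in the proof of Theorem \ref{thm:metriquecourburepositive} via Lemma \ref{lem:enleverintegrale}. So there is no internal proof to compare your proposal against; the only meaningful comparison is with the original argument of Deroin and Kleptsyn.

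Judged on that basis, your outline reproduces the architecture of their proof faithfully: Garnett's theory for existence and ergodic decomposition of harmonic measures, negativity of the Lyapunov exponent of every ergodic harmonic measure in the absence of a transverse invariant measure, and exponential contraction along Brownian paths to get uniqueness of the harmonic measure on each minimal set and finiteness of the family of minimal sets. But as a proof it has a genuine gap exactly where you flag it: the negativity $\lambda(\mu)<0$ rests on two steps (``a positive exponent would force uniform transverse expansion'', and the zero-exponent case is ``excluded by a rigidity argument'' via a martingale symmetrization) that are named rather than carried out, and neither is routine --- a positive exponent of an ergodic harmonic measure does not by itself give uniform expansion, and upgrading stationary transverse data to a genuinely holonomy-invariant measure in the zero-drift case is the hard analytic content of Deroin and Kleptsyn's work. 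Likewise, your finiteness argument needs the uniform local contraction estimates near each minimal set, which come from that same technical core. In short, your proposal is an accurate road map of the known proof rather than a proof; within the present paper that is precisely the status the statement has anyway, namely a cited external result.
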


\subsection{Dynamics and positive curvature}\label{sec:deroinmodifie}

In this section we show that if a compact 3-manifold foliated by Riemann surfaces has no transverse invariant measure then the normal bundle $N_{\cal F}$ has a metric whose curvature is positive. We begin with the following lemma, whose proof is inspired by \cite{Deroin-Kleptsyn}.

\begin{lem}\label{lem:enleverintegrale}
Let $M$ be a compact 3-manifold and $\cal F$ a foliation by Riemann surfaces. Let $\mathbf g$ be a metric on the tangent bundle $T_{\cal F}$. Let $\varphi:M\to\R$ be a continuous function such that $\int_M \varphi \, d\nu>0$ for all ${\mathbf g}$-harmonic measure $\nu$ on $M$. Then there exists $g\in\Cinf_{\cal F}(M)$ such that $\varphi-\Delta_{\mathbf g} g>0$.
\end{lem}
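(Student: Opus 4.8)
The plan is to recast the statement as a convex-separation problem in the Banach space $C(M)$ of continuous real functions on $M$ equipped with the supremum norm, and to resolve it by Hahn--Banach duality; the decisive point is that the finite positive measures annihilating every leafwise Laplacian are exactly the $\mathbf g$-harmonic measures. First I would set $V:=\{\Delta_{\mathbf g}g : g\in\Cinf_{\cal F}(M)\}\subset C(M)$, a linear subspace (here one uses that for $g$ smooth along the leaves the function $\Delta_{\mathbf g}g$ is continuous on $M$), and let $\cal P:=\{h\in C(M): h>0\text{ on }M\}$ be the convex cone of strictly positive functions. Since $M$ is compact, $h\mapsto\min_M h$ is continuous for the sup norm, so $\cal P$ is open. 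The sought $g$ exists if and only if the affine subspace $\varphi+V$ (which equals $\varphi-V$ as $V$ is a subspace) meets $\cal P$, so it suffices to rule out the disjoint case.

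Argue by contradiction: suppose $(\varphi+V)\cap\cal P=\emptyset$. Applying the geometric Hahn--Banach theorem to the disjoint convex sets $\cal P$ (open) and $\varphi+V$ yields a nonzero continuous functional $\Lambda\in C(M)^*$ and $\gamma\in\R$ with $\Lambda<\gamma$ on $\cal P$ and $\Lambda\ge\gamma$ on $\varphi+V$. Since $\cal P$ is a cone, the inequality $t\,\Lambda(u)<\gamma$ for all $t>0$ forces $\Lambda\le 0$ on $\cal P$, hence $\Lambda\le 0$ on the closed cone $\overline{\cal P}$ of nonnegative functions and $\gamma\ge\sup_{\cal P}\Lambda=0$. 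Thus $-\Lambda$ is a positive functional, so by the Riesz representation theorem $-\Lambda(h)=\int_M h\,\d\mu$ for a nonzero positive Radon measure $\mu$. Because $V$ is a linear subspace, the bound $\Lambda(\varphi)+t\,\Lambda(v)\ge\gamma$ for all $t\in\R$ and $v\in V$ forces $\Lambda|_V=0$, that is $\int_M\Delta_{\mathbf g}g\,\d\mu=0$ for every $g\in\Cinf_{\cal F}(M)$; by the definition of harmonic measure, the normalization $\nu:=\mu/\mu(M)$ (with $\mu(M)>0$ since the positive measure $\mu$ is nonzero) is $\mathbf g$-harmonic. Finally $\Lambda(\varphi)\ge\gamma\ge 0$ gives $\int_M\varphi\,\d\mu\le 0$, whence $\int_M\varphi\,\d\nu\le 0$, contradicting the hypothesis $\int_M\varphi\,\d\nu>0$.

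The substantive point is the analytic setup rather than the formal duality: one must verify that $V$ indeed lies in $C(M)$, i.e. that a leafwise-smooth $g$ produces a continuous $\Delta_{\mathbf g}g$, which is exactly where the transverse regularity built into $\Cinf_{\cal F}(M)$ is used, and one must confirm that the separating measure is genuinely nonzero and genuinely harmonic. The nonvanishing follows from $\Lambda\neq 0$ together with positivity (a positive measure of total mass $0$ is zero), and harmonicity is precisely the vanishing of $\Lambda$ on $V$. I expect no further difficulty, and in particular Theorem \ref{thm:corothmb} is not needed here: the content of the lemma is exactly the duality between being \emph{positive modulo leafwise coboundaries} and being \emph{positive against every $\mathbf g$-harmonic measure}.
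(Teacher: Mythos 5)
Your proof is correct and follows essentially the same route as the paper's: a Hahn--Banach separation against the open cone of strictly positive functions, followed by the Riesz representation theorem to produce a $\mathbf g$-harmonic probability measure annihilating the leafwise Laplacians, contradicting the hypothesis. The only difference is technical rather than substantive: the paper separates the point $\pi(\varphi)$ from $\pi(C)$ in the quotient Banach space $C^0(M)/E$, where $E=\overline{\{\Delta_{\mathbf g}g : g\in\Cinf_{\cal F}(M)\}}$ is closed (and therefore needs a final approximation step to recover $\varphi-\Delta_{\mathbf g}g>0$), whereas you separate the affine subspace $\varphi+V$ from the cone directly in $C(M)$, which avoids the quotient, the closure, and the approximation altogether.
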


\begin{proof}
We consider the space $C^0(M)$ of continuous functions $\phi:M\to\R$. It is a Banach space for the  topology of uniform convergence. We note $E$ the closed subspace
\[E :=\{\phi\in C^0(M)\mid\exists (g_n)_n \in\Cinf_{\cal F}(M)\text{ such that } \norm{\phi-\Delta_{\mathbf g} g_n}_\infty\to0\}\]
and $C$ the cone
\[C :=\{\psi\in C^0(M)\mid\psi>0 \text{ on } M\}.\]
The set $C$ is a convex cone because for every $\alpha>0$ and every function $\psi\in C$, the function $\alpha\psi$ is in $C$.
We note $F=C^0(M)/E$ and $\pi:C^0(M)\to F$ the canonical projection. Since $E$ is closed, $\norm{\pi(\phi)}:=\inf_{e\in E}\norm{\phi-e}_\infty$ defines a norm on the space $F$, hence $F$ is a Banach space. Moreover the projection $\pi$ is linear and continuous.

Let $\varphi\in C^0(M)$ such that $\int_M \varphi\d\nu>0$ for every harmonic measure $\nu$.
To show that there exists $g\in\Cinf_{\cal F}(M)$ such that $\varphi-\Delta_{\mathbf g} g>0$, it is enough to prove that $\pi(\varphi)\in\pi(C)$.

Indeed, if $\pi(\varphi)\in\pi(C)$ then there exists a function $c\in C$ and a function $e\in E$ such that $\varphi=c+e$. Since $M$ is compact and $c$ is a continuous function on $M$ there exists $\epsilon>0$ such that $c\geq\epsilon$. Therefore we have $\varphi-e\geq\epsilon$ on $M$. According to the definition of $E$ there exist $g\in\Cinf_{\cal F}(M)$ such that $\abs{e-g}_\infty<\epsilon/2$. Consequently we have $\varphi-\Delta_{\mathbf g} g\geq\epsilon/2$ as desired.

So we have to prove that $\pi(\varphi)\in\pi(C)$. By contradiction we suppose that $\pi(\varphi)\not\in\pi(C)$. The set $\pi(C)$ is convex and open in $F$. We use the classical Hahn-Banach theorem for $\pi(C)$ and the closed subset $\{\pi(\varphi)\}$ of $F$. According to this theorem there exists a linear continuous functional $L:F\to\R$ and $\alpha\in\R$ such that $L(\pi(\varphi))\leq\alpha$ and $L\geq\alpha$ on $\pi(C)$. We verify that this gives us
\begin{equation}\label{eq:alphaescero}
L(\pi(\varphi))\leq0 \text{ and } L\geq0 \text{ on } \pi(C).
\end{equation}
If we take a sequence of constant functions $\epsilon_n\in C$ such that $\epsilon_n\to0$, then since $L$ and $\pi$ are linear and continuous we have $\alpha\leq L(\pi(\epsilon_n))\to L(\pi(0))=0$. Hence $\alpha\leq0$ and then $L(\pi(\varphi))\leq0$. To verify $L\circ\pi\geq0$ on $C$, we suppose by contradiction that there exists $g\in C$ such that 
$L(\pi(g))=\beta<0$. Since $C$ is a cone, and $L$ and $\pi$ are linear, for all $\lambda\in\R$ we have
\[L(\pi(\lambda g))=\lambda L(\pi(g))=\lambda\beta\to -\infty, \text{ if } \lambda\to +\infty\]
and this is a contradiction with $L(\pi(C))\geq\alpha$. This shows \eqref{eq:alphaescero}.

We define a linear functional $\tilde L$ on the space $C^0(M)$ by $\tilde L(\phi):=L(\pi(\phi))$. It is positive since, by definition of $\tilde L$ on $C$, we have $\tilde L(\psi)=L(\pi(\psi))>0$ for every function $\psi\in C$. Since $\tilde L$ is positive, the Riesz Representation Theorem tells us that there exists a positive measure $\nu$ representing $\tilde L$, i.e. $\tilde L(\phi)=\int_M \phi\d\nu$ for all $\phi\in C^0(M)$.
This measure is $\mathbf g$-harmonic because for every $\Cinf$-function $g$ we have $\tilde L(\Delta_{\mathbf g} g)=L(\pi(\Delta_{\mathbf g} g))=0$, since $\Delta_{\mathbf g} g \in E$.

Thus, we have constructed a harmonic measure $\nu$ such that
\[\int_M \varphi \, \d\nu=L(\pi(\varphi))=\tilde L(\varphi)\leq0.\]
But by hypothesis $\int_M \varphi \, d\nu>0$ has to be positive. This contradiction shows that $\pi(\varphi)\in\pi(C)$.
\end{proof}

\begin{thm}\label{thm:metriquecourburepositive}
Let $M$ be a compact 3-manifold and let $\cal F$ be a foliation by Riemann surfaces on $M$. If $\cal F$ doesn't have a transverse invariant measure then $N_{\cal F}$ has a metric with positive curvature.
\end{thm}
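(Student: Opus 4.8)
The plan is to reduce the statement to the functional-analytic Lemma \ref{lem:enleverintegrale}, using the dynamical input of Theorem \ref{thm:corothmb} as the bridge. First I would fix an auxiliary metric $\mathbf g$ on the tangent bundle $T_{\cal F}$ and start from an \emph{arbitrary} smooth metric $m_{\cal F}$ on the normal bundle $N_{\cal F}$. Its curvature is a leafwise $(1,1)$-form, which I write as $\Theta_{m_{\cal F}} = \varphi \cdot \vol_{\mathbf g}$ for a continuous function $\varphi \colon M \to \R$ (continuous since $\Theta_{m_{\cal F}}$ is smooth and $\vol_{\mathbf g}$ is a smooth positive density). There is no reason for $\varphi$ to have constant sign, so the starting metric need not be positively curved; the whole point is to modify it conformally until the sign becomes positive everywhere.

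The link with the dynamics is the Lyapunov exponent formula of Deroin--Kleptsyn recalled above, $\lambda(\mu) = -\int_M \varphi \, d\mu$. Since $\cal F$ carries no transverse invariant measure, Theorem \ref{thm:corothmb} provides finitely many minimal sets with harmonic measures $\nu_1,\dots,\nu_r$, each satisfying $\lambda(\nu_i) < 0$, and asserts that every $\mathbf g$-harmonic measure is a convex combination of the $\nu_i$. Reading the exponent formula backwards, $\lambda(\nu_i) < 0$ is exactly $\int_M \varphi \, d\nu_i > 0$; and since a convex combination $\nu = \sum c_i \nu_i$ with $\sum c_i = 1$ has $\int_M \varphi \, d\nu = \sum c_i \int_M \varphi \, d\nu_i$, positivity is inherited by every harmonic measure $\nu$. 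This is precisely the hypothesis of Lemma \ref{lem:enleverintegrale}, which then yields a leafwise-smooth $g \in \Cinf_{\cal F}(M)$ with $\varphi - \Delta_{\mathbf g} g > 0$ on all of $M$.

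It remains to convert this pointwise inequality into a metric. I would set $m'_{\cal F} := e^{cg}\, m_{\cal F}$ for a suitable positive constant $c$; in a chart this replaces the weight $\sigma_j$ by $\sigma_j - cg$, so that $\Theta_{m'_{\cal F}} = \Theta_{m_{\cal F}} - \tfrac{c\, i}{2\pi}\partial\bar\partial_{\cal F} g$. The single computation to pin down is the pointwise identity relating the leafwise complex Hessian to the leafwise Laplacian: $\tfrac{i}{2\pi}\partial\bar\partial_{\cal F} g$ equals $(\Delta_{\mathbf g} g)\,\vol_{\mathbf g}$ up to one fixed positive constant depending only on the normalizations of $\Delta_{\mathbf g}$ and $\vol_{\mathbf g}$. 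Choosing $c$ to cancel that constant gives $\Theta_{m'_{\cal F}} = (\varphi - \Delta_{\mathbf g} g)\,\vol_{\mathbf g} > 0$, as desired. Note that $g$ is only smooth along the leaves, but this is harmless: $\Theta_{m'_{\cal F}}$ involves only the leafwise operator $\partial\bar\partial_{\cal F}$, so leafwise regularity suffices to define it, and positivity is a pointwise condition on $M$.

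The genuinely substantive input is Lemma \ref{lem:enleverintegrale} (whose Hahn--Banach argument is already supplied): conceptually, the averaged condition $\int \varphi \, d\nu > 0$ — all the dynamics provides — \emph{cannot} in general be upgraded to pointwise positivity of $\varphi$ itself, but it \emph{can} after subtracting a leafwise coboundary $\Delta_{\mathbf g} g$, which is invisible to every harmonic measure. Beyond invoking that lemma, the only step requiring care is the normalization constant in the conformal-change identity, which is routine.
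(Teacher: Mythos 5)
Your proposal is correct and follows essentially the same route as the paper's proof: the same reduction via Theorem \ref{thm:corothmb} and the Lyapunov exponent formula to the hypothesis of Lemma \ref{lem:enleverintegrale}, followed by the same conformal modification $e^{g}$ of an arbitrary starting metric on $N_{\cal F}$. Your only (harmless) refinement is to track the universal normalization constant between $\tfrac{i}{2\pi}\partial\bar\partial_{\cal F}$ and $\Delta_{\mathbf g}\,\vol_{\mathbf g}$ explicitly via the exponent $cg$, a point the paper glosses over by implicitly normalizing $\Delta_{\mathbf g}$ so that the two agree.
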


\begin{proof}
Let $\tilde m$ be a metric on the normal bundle $N_{\cal F}$ and $\mathbf g$ a metric on the tangent bundle $T_{\cal F}$. Let $\varphi:M\to\R$ be the continuous function such that $\Theta_{m_{\cal F}}=\varphi\cdot\vol_{\mathbf g}$.
According to Theorem \ref{thm:corothmb}, since the foliation $\cal F$ doesn't have a transverse invariant measure, every $\mathbf g$-harmonic measure $\nu$ supported on $M$ is a convex combination $\nu=\sum_i t_i\nu_i$, with $\nu_i$ a ergodic harmonic measure of Lyapunov exponent $\lambda(\nu_i)<0$. In particular, we have 
\[-\int_M \varphi \, \d\nu = - \sum_i t_i \int_M \varphi \, \d\nu_i = \sum_i t_i \lambda(\nu_i) < 0.\]
Then Lemma \ref{lem:enleverintegrale} gives us a $\Cinf$-function $g$ such that
\[\varphi-\Delta_{\mathbf g} g>0 \text{ on } M.\]
We define
\[m_{\cal F}= \tilde m \cdot e^g.\]
The curvature of this metric on $N_{\cal F}$ verifies
\[\Theta_{m_{\cal F}}=\Theta_{\tilde m}-\frac{i}{2\pi}\partial\bar\partial_{\cal F} g=\left(\varphi - \Delta_{\mathbf g} g\right) \vol_{\mathbf g}.\]
Hence the curvature of $m_{\cal F}$ is positive because $\varphi-\Delta_{\mathbf g} g$ is positive on $M$. 
\end{proof}

We end this section with some particular examples where we can construct explicitly a metric on the normal bundle of the CR foliation with positive curvature.

\begin{exe}\label{exe:mesure_lisse}
Let $\cal F$ be a foliation by Riemann surfaces of a compact $3$-manifold, and $\mathbf g$ a metric on $T\cal F$. Let $\mu$ be a harmonic measure. We suppose that $\mu$ is absolutely continuous with respect to the Lebesgue measure, and that its density is a continuous function that is everywhere strictly positive. In this case, we can split the harmonic measure into the product of the volume form along the leaves associated to the metric $\mathbf g$ and a transverse volume element. This transverse volume allows us to build a metric on the normal bundle of the foliation. The measure $\mu$ being harmonic, we get that the norm of a non zero plane section of the normal bundle, with respect to the Bott connection, is a positive function and it is harmonic along the leaves. Hence the curvature of this metric is non negative, since the laplacian of the logarithm of a positive harmonic function is non positive.
\end{exe}

The existence of a harmonic measure that is absolutely continuous with respect to the Lebesgue measure is not very common. Indeed, it could happen that $\cal F$ has minimal exceptional sets, and in this case $\mu$ is supported on the union of these minimals \cite{Deroin-Kleptsyn}. These minimals are conjectured to be of Lebesgue measure zero. Moreover, even in the case where the foliation is minimal, the harmonic measures will often be singular with respect to the Lebesgue measure. We refer to the papers \cite{Deroin-Kleptsyn-Navas-circle1,Deroin-Kleptsyn-Navas-circle2} for these delicate questions. However, there exist some particular cases where the harmonic measures are smooth and have a positive density. This is the case for homogeneous foliations for example. 

\begin{exe}
Let $G$ be a Lie group of dimension 3. We suppose that $G$ contains a lattice $\Gamma$ and a copy of the affine group
\[A=\{x\mapsto ux+v \mid u\in\R^*_+, v\in\R\}.\]
Let $M:=\Gamma\backslash G$ and $\cal F$ the foliation of $M$ defined by the locally free action of $A$ on $M$ by the left product given by
\[a\cdot\Gamma g := \Gamma g a^{-1} \text{ for every } a\in A, \ g\in G.\]

We consider on $A$ the metric $\mathbf g=\frac{\d u^2+\d v^2}{u^2}$ with constant curvature $-1$. We verify that it is invariant by left multiplication. We remark that $\mathbf g$ is not invariant by right multiplication, nor is its volume. If $D_a:a'\in A\mapsto a'a^{-1}\in A$ is the right product by $a^{-1}$, we have
\begin{equation}\label{eq:equivariancedroite}
D_a^*\vol_{\mathbf g}=u\vol_{\mathbf g}.
\end{equation}
The orbits of the action of $A$ on $M$ are quotients of $A$ by a discrete subgroup of $A$ acting on the left. Consequently $\mathbf g$ endows the leaves of $\cal F$ with a riemannian metric with constant curvature $-1$.

Let $\tilde{\mu}$ be a Haar measure on $G$. This measure is bi-invariant by $\Gamma$, hence it descends to a volume measure $\mu$ on $M$ that is invariant by the action of the affine group $A$. This form decomposes into the product $\mu =\d\vol_{\mathbf g}\wedge\omega$, where $\omega$ is a form that gives the foliation $\cal F$. By relation \eqref{eq:equivariancedroite}, and the invariance of $\mu$ by right multiplication, we obtain
\[D_a^*\omega=\frac 1u\omega, \text{ for every } a\in A.\]
In particular, if $s$ is a plane section of the normal bundle of $\cal F$ along the leaves, we have $\omega(s)=ku$, with $k$ a constant, for any parametrisation of the leaf seen as an orbit of the affine group. This shows that the curvature of the metric $m_\cal F:=\abs{\omega}$ on the normal bundle of $\cal F$ is given by $\Theta_{m_\cal F}=\frac{1}{4\pi}\vol_{\mathbf g}$. In particular, it is positive.

We remark that in the case where $G$ is the group $\mathrm{Sol}:=\R_{>0}\ltimes\R^2$, where $\R_{>0}$ acts on $\R^2$ by $u\cdot(v,w):=(uv,u^{-1}w)$, the manifolds $M$ that we obtain in this way are, up to a finite cover, hyperbolic torus bundles. We send the reader to \cite{Garnett} for a more geometric view-point in the case where $G=\mathrm{PSL}(2,\R)$.
\end{exe}

\section{Geometry of the complement}\label{sec:geocp}

In this section we prove the following
\begin{thm}\label{thm:complementfortementpconvLP}
Let $X$ be a compact complex surface, $M$ a compact real analytic Levi-flat hypersurface in $X$. If $M$ doesn't have a transverse invariant measure, then the connected components of $X\setminus M$ are modifications of Stein spaces.
\end{thm}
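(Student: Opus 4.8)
The plan is to combine Theorem \ref{thm:metriquecourburepositive} with Brunella's theorem cited in the introduction (see \cite{Brunella-ample}). The chaotic hypothesis—absence of a transverse invariant measure—feeds directly into Theorem \ref{thm:metriquecourburepositive} to produce a metric on the normal bundle $N_{\cal F}$ of the CR foliation with strictly positive curvature. The whole point is to transfer this positivity from the real three-dimensional object $M$ to the complex two-dimensional geometry of the exterior components $\Omega$ of $X\setminus M$, where it can be interpreted as the strict plurisubharmonicity needed for strong pseudoconvexity. By the Grauert--Narasimhan result recalled in the introduction, a strongly pseudoconvex domain is a modification of a Stein space, which is exactly the conclusion.

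\emph{First} I would fix a connected component $\Omega$ of $X\setminus M$ and examine a collar neighbourhood of its boundary $\partial\Omega\subset M$. Because $M$ is real analytic and Levi-flat, the CR foliation $\cal F$ extends, near $M$, to a holomorphic foliation $\cal G$ of a neighbourhood of $M$ in $X$ by complexifying the leaves; the real hypersurface $M$ appears as a one-real-parameter family of complex leaves of $\cal G$, and $N_{\cal F}$ is the restriction to $M$ of the holomorphic normal bundle $N_{\cal G}$. \emph{Next} I would build a defining function for $\Omega$ near $\partial\Omega$ out of the positively curved metric. Concretely, if $m_{\cal F}=e^{-\sigma}\abs{\xi}^2$ is the metric furnished by Theorem \ref{thm:metriquecourburepositive}, then the associated transverse coordinate (the signed $m_{\cal F}$-distance to $M$ along the holomorphic transversals to $\cal G$) gives a real function $u$ on a one-sided neighbourhood of $\partial\Omega$ with $\{u=0\}=M$ and $u>0$ in $\Omega$. \emph{Then} I would compute the Levi form (the complex Hessian $\partial\bar\partial_X$) of $-\log u$, or of a suitable convex increasing function of $u$, and show that its positivity along the leaf direction is automatic from the flatness of the leaves while its positivity in the transverse complex direction is precisely controlled by the curvature $\Theta_{m_{\cal F}}=\varphi\cdot\vol_{\mathbf g}>0$. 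This should yield a strictly plurisubharmonic exhaustion of the collar, hence strong pseudoconvexity of $\Omega$ near its boundary.

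\emph{Finally}, since $\Omega$ is an open subset of the \emph{compact} surface $X$, strong pseudoconvexity is only required near $\partial\Omega$: away from a compact set of $\overline\Omega$ the exhaustion must be strictly plurisubharmonic, and that is exactly what the collar construction supplies, the interior behaviour being irrelevant for the Grauert--Narasimhan criterion. Patching the collar exhaustion to an arbitrary smooth exhaustion of the interior (using a convex combination that is dominated by the collar function near $\partial\Omega$) produces an exhaustion function strictly plurisubharmonic outside a compact set, which is property \ref{itm:stpsconv}. Invoking Grauert \cite{Grauert} and Narasimhan \cite{Narasimhan} then identifies $\Omega$ as a modification of a Stein space.

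\emph{The hard part will be} the curvature computation in the collar, i.e. verifying that the sign of the transverse Levi form of the candidate defining function is genuinely governed by $\Theta_{m_{\cal F}}$ and not corrupted by cross terms between the leaf and transverse directions or by the mismatch between the real normal direction to $M$ and the complex normal direction to the leaves of $\cal G$. Getting the geometry of this real-to-complex passage right—so that the positivity of $\Theta_{m_{\cal F}}$ on $M$ really propagates to strict plurisubharmonicity on the one-sided complex neighbourhood—is the crux, and it is presumably where Brunella's theorem \cite{Brunella-ample} is invoked to package this transfer cleanly rather than performing the Hessian estimate by hand.
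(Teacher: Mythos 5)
Your overall architecture matches the paper's: metric from Theorem \ref{thm:metriquecourburepositive}, local extension of the CR foliation (Proposition \ref{prop:extentionvoisinage}), Brunella's theorem, then Grauert--Narasimhan. But there is a genuine gap exactly at the point you flag as ``the hard part'' and then defer to Brunella. Brunella's theorem, as used in the paper (Theorem \ref{thm:complementopseudoconvexo}), does \emph{not} accept as input a metric on $N_{\cal F}$ over $M$ whose curvature is positive only along the leaves; its hypothesis is a metric on the normal bundle of the extended holomorphic foliation $\cal G$, defined on a full neighbourhood $U$ of $M$ in $X$, whose curvature is positive \emph{in every complex direction}. Theorem \ref{thm:metriquecourburepositive} only produces the former: a metric on $N_{\cal F}$ over the $3$-manifold $M$, positively curved in the leaf direction (the curvature there is a leafwise $(1,1)$-form, so ``all directions'' does not even make sense on $M$). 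Bridging this is the paper's step 3, Proposition \ref{prop:extensionmetrique}, and it requires real work: one extends the metric to a neighbourhood by a partition of unity to get $\tilde m$, then twists it by $\exp(-Cd_M^2)$, where $d_M$ is the distance to $M$, and proves via the Levi-form criterion of Lemma \ref{lem:formeLeviestpositive} that for $C$ large the curvature of $\tilde m\exp(-Cd_M^2)$ is positive in all directions near $M$ --- the twist supplies transverse positivity of order $C$, while the cross terms are shown to be unaffected on $M$, so continuity closes the estimate. Your proposal has no counterpart to this step, and Brunella's theorem will not do it for you.

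A secondary, related error: you assert that positivity ``along the leaf direction is automatic from the flatness of the leaves while its positivity in the transverse complex direction is precisely controlled by the curvature $\Theta_{m_{\cal F}}$.'' This is backwards. In the paper's computation the leafwise positivity is exactly what the curvature $\Theta_{m_{\cal F}}>0$ provides (it survives the extension by continuity), whereas the transverse positivity is \emph{not} controlled by $\Theta_{m_{\cal F}}$ at all --- it is manufactured by the factor $\exp(-Cd_M^2)$, whose transverse complex Hessian on $M$ equals $\tfrac{C}{2}h>0$. Your closing step (patching the collar function $h$ of Theorem \ref{thm:complementopseudoconvexo} with an arbitrary interior exhaustion to get strict plurisubharmonicity outside a compact set, then invoking \cite{Grauert-modifications}) is fine and is essentially what the paper does implicitly; the missing metric-extension argument is the only, but essential, defect.
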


This result was known in some particular cases, in particular in the case of hypothetical Levi-flat hypersurfaces of the complex projective plane by a theorem of Takeuchi \cite{Takeuchi-projectif}, or in the case of flat $\C\Proj^1$-bundles over a curve of genus $\geq 2$ with real, faithful and discrete monodromy (i.e. associated to the uniformisation of a curve with the same genus as the base), by a theorem of Diederich and Ohsawa \cite{Diederich-Ohsawa}. It is interesting to remark that our condition, although quite general, is not optimal: in the interesting work \cite{Nemirovskii}, Nemirovski\u{\i} defines Levi-flat hypersurfaces in some elliptic surfaces, that separate the surface in Stein domains. These Levi-flat hypersurfaces contain some elliptic invariant curves, and hence they are not chaotic. However they have turbulent properties similar to those satisfied by chaotic Levi-flat hypersurfaces. It would be interesting to understand what is the optimal condition for this problem, but we don't push this question further in this work.

To prove our result, the idea is to construct an exhaustion function on the complement of the Levi-flat hypersurface that is strictly pseudoconvex outside of a compact set. Doing this we obtain that $X\setminus M$ is strongly pseudoconvex. Using a result of Grauert \cite{Grauert-modifications} we get that $X\setminus M$ is a modification of a Stein domain. We follow the following strategy:
\begin{enumerate}
\item Construct a metric with positive curvature on the normal bundle of the CR foliation.
\item Extend the CR foliation locally.
\item Extend the metric with positive curvature locally.
\item Construct a strictly pseudoconvex exhaustion function on the complement.
\end{enumerate}

The first step is Theorem \ref{thm:metriquecourburepositive}, which we proved in Section \ref{sec:dynamique}. The second step is a well known result that we can find in \cite{LinsNeto} or \cite{Cartan-LeviPlat}:

\begin{prop}\label{prop:extentionvoisinage}
Let $X$ be a complex surface, $M$ a real analytic Levi-flat hypersurface in $X$ and $\cal F$ the Cauchy-Riemann foliation of $M$. Then, there exists a neighbourhood $U$ of $M$ in $X$ and a non-singular holomorphic foliation $\cal G$ on $U$ such that $\cal G|_M=\cal F$.
\end{prop}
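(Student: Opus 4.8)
The plan is to reduce the statement to a local normal form for $M$ and then to glue the resulting local holomorphic foliations together by means of the identity principle. Concretely, I would first show that around each point $p\in M$ there are holomorphic coordinates $(z,w)$ on a neighbourhood $V_p\subset X$ in which
\[
M\cap V_p=\{\Im w=0\}.
\]
In such coordinates the leaves of $\cal F$ are the real curves $\{w=c\}$ with $c\in\R$, and the non-singular holomorphic foliation $\{w=c\}$ with $c\in\C$ restricts to $\cal F$ on $M\cap V_p$. Thus the whole difficulty is concentrated in producing these coordinates.

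To build them, I would use that $\cal F$ is a real analytic foliation of the real analytic manifold $M$: near $p$ it admits a real analytic first integral, i.e. a real analytic submersion $\pi\colon M\cap V_p\to\R$ whose fibres are the leaves. Because each leaf is a holomorphic curve of $X$, the function $\pi$ is annihilated by the tangential Cauchy--Riemann operator, hence is a (real analytic) CR function on $M$. A real analytic CR function on a real analytic hypersurface extends to a holomorphic function on a full neighbourhood in $X$: writing a real analytic local defining function $\rho$ of $M$, the CR condition $\bar\partial\pi=\lambda\,\bar\partial\rho$ along $M$ (with $\lambda$ real analytic) forces, after complexifying the variables $(z,\bar z)$, the extension of $\pi$ to be independent of the antiholomorphic variables along $\{\rho=0\}$, and this produces a holomorphic submersion $w$ with $w|_M=\pi$. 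Since $\pi$ is real valued and $w$ is a submersion, $M$ is contained in $\{\Im w=0\}$ near $p$, and a dimension count (both are smooth real hypersurfaces through $p$) gives equality, yielding the normal form.

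For the gluing, let $w_j$ and $w_k$ be two such local holomorphic first integrals on overlapping neighbourhoods $V_j$ and $V_k$. On $M\cap V_j\cap V_k$ the restrictions $w_j|_M$ and $w_k|_M$ are two real analytic first integrals of the same foliation $\cal F$, hence $w_j=g_{jk}(w_k)$ on $M$ for a real analytic diffeomorphism $g_{jk}$ between intervals of $\R$. Complexifying $g_{jk}$ to a biholomorphism $H_{jk}$ between neighbourhoods in $\C$ and invoking the identity principle for the holomorphic functions $w_j$ and $H_{jk}\circ w_k$, which agree on the maximally real set $M$, I obtain $w_j=H_{jk}\circ w_k$ on a neighbourhood of $M\cap V_j\cap V_k$ in $X$. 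Since $H'_{jk}\neq0$, the level sets of $w_j$ and $w_k$ define the same foliation on the overlap; the local foliations $\{w_j=c\}$ therefore glue into a single holomorphic foliation $\cal G$ on a neighbourhood $U=\bigcup_j V_j$ of $M$, with $\d w_j\neq0$ guaranteeing that $\cal G$ is non-singular. By construction $\cal G|_M=\cal F$.

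The main obstacle is the extension step, i.e. the passage from the real analytic first integral $\pi$ defined only on $M$ to a genuinely holomorphic submersion $w$ on an open set of $X$. This is exactly where the real analyticity of $M$ is indispensable: for a merely smooth Levi-flat hypersurface the CR first integral need not extend holomorphically, and the local normal form $\{\Im w=0\}$ can fail. The remaining points --- checking the CR condition, verifying that the extension is a submersion, and gluing via the identity principle --- are routine once this complexification is in hand.
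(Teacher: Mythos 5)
The paper itself does not prove this proposition: it quotes it as a well-known result and refers to \cite{LinsNeto} and \cite{Cartan-LeviPlat}, and it records the local statement you establish along the way as Lemma \ref{lem:formelocaleLeviPlat} (again by citation). Your argument is essentially the classical one behind those references: produce a real analytic local first integral of $\cal F$, observe that it is a CR function, extend it holomorphically by Severi's theorem on real analytic CR functions on real analytic hypersurfaces (this is indeed the crux, and you correctly identify it as the only place where real analyticity of $M$ is indispensable), deduce the normal form $M=\{\Im w=0\}$, and glue the local foliations through the identity principle. The structure is sound and each step, including the dimension count giving $M=\{\Im w=0\}$ locally and the fact that the holomorphic extension is a submersion near $M$, is correct.

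Two points need repair, neither fatal. First, $M$ is not a ``maximally real'' set: a real hypersurface of a complex surface has real dimension $3$ and is generic, not totally real. The identity principle still applies, but for the right reason: the zero set of a holomorphic function that is not identically zero on a connected open set is a complex curve, of real dimension $2$, so it cannot contain a piece of $M$; hence $w_j-H_{jk}\circ w_k$ vanishes identically on every connected component of its domain of definition that meets $M$. Second, precisely because of this, the equality $w_j=H_{jk}\circ w_k$ is guaranteed only on a neighbourhood of $M\cap V_j\cap V_k$ (as you in fact state), not on all of $V_j\cap V_k$: the overlap may have connected components disjoint from $M$ on which the two local foliations need not agree. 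Consequently the glued foliation is not defined on all of $U=\bigcup_j V_j$ as written, but only on a sufficiently small neighbourhood of $M$ inside this union; this costs a standard shrinking argument (of exactly the kind the paper carries out in Lemmas \ref{lem:extennsionuniquessensintesection}, \ref{lem:extensionVepsilon} and \ref{lem:voisinage} when it globalizes extensions across level sets). Since the proposition only asks for some neighbourhood of $M$, the conclusion is unaffected once this is fixed.
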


The last step is a result of Brunella:

\begin{thm}\label{thm:complementopseudoconvexo}\cite{Brunella-ample}
Let $X$ be a compact complex surface and $M$ a real analytic Levi-flat hypersurface in $X$. We suppose that $M$ is invariant by a foliation $\cal G$ defined on some neighbourhood $U$ of $M$ in $X$. Moreover, we suppose that the normal bundle $N_{\cal F}$ of the foliation has a metric with positive curvature. Then there exists a neighbourhood $U'\subset U$ of $M$ in $X$ and a strictly plurisubharmonic function $h:U'\to(-\infty,+\infty]$ such that
\begin{enumerate}
\item $h(p)\to+\infty$ when $p\to M$.
\item $h$ is an exhaustion function on $U'\setminus M$.
\end{enumerate}
\end{thm}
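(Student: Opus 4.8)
The plan is to realise $h$ as $-\log$ of a weighted transverse distance to $M$, the weight being the given positively curved metric on the normal bundle. Heuristically the $-\log(\text{distance})$ term forces plurisubharmonicity, and blow-up, in the direction transverse to the leaves of $\cal G$, while positivity of the curvature forces strict plurisubharmonicity along the leaves; the point is that these two positivities live in complementary directions and combine to give strict plurisubharmonicity in all directions near $M$. To set this up I would work in foliation charts $(z_j,w_j)$ for $\cal G$, with leaves $\{w_j=\text{const}\}$ and transition maps $w_j=H_{jk}(w_k)$, where $H_{jk}$ is holomorphic in the transverse variable alone. Since $M$ is real analytic and $\cal G$-invariant it is, in each chart, a union of leaves, hence $M=\{w_j\in\Gamma_j\}$ for a real analytic arc $\Gamma_j\subset\C$; straightening $\Gamma_j$ to $\R$ by a holomorphic change of the transverse coordinate (which preserves $\cal G$) we may assume $M=\{\Im w_j=0\}$. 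The positively curved metric on $N_{\cal F}$ is given by local potentials $\sigma_j$ with $\partial_z\partial_{\bar z}\sigma_j=:\varphi>0$; I would extend each $\sigma_j$ to $U$ preserving the cocycle $\sigma_j-\sigma_k=2\log\abs{H'_{jk}}$ (so that $e^{-\sigma_j}\abs{\cdot}^2$ is a genuine metric on $N_{\cal G}$), and by continuity and compactness of $M$ keep $\varphi\geq 2\epsilon>0$ near $M$.

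Next I would set $h_j:=\tfrac12\sigma_j-\log\abs{\Im w_j}$. The key gluing computation is that
\[
h_j-h_k=\tfrac12(\sigma_j-\sigma_k)+\log\frac{\abs{\Im w_k}}{\abs{\Im w_j}}
\]
is a function of the transverse coordinate \emph{alone} (each of $\sigma_j-\sigma_k$, $\Im w_j$, $\Im w_k$ is), and tends to $0$ on approaching $M$: indeed $\abs{\Im w_j}/\abs{\Im w_k}\to\abs{H'_{jk}}$ while $\tfrac12(\sigma_j-\sigma_k)\to\log\abs{H'_{jk}}$. I then glue by a partition of unity $\{\chi_j\}$, setting $h:=\sum_j\chi_j h_j$.

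The heart of the matter is to check $i\partial\bar\partial h>0$ near $M$. In fixed local coordinates the leading term $\sum_j\chi_j\, i\partial\bar\partial h_j$ has, schematically, Hessian
\[
\begin{pmatrix} \tfrac12\partial_z\partial_{\bar z}\sigma & \ast \\ \ast & \tfrac12\partial_w\partial_{\bar w}\sigma+\frac{1}{4(\Im w)^2}\end{pmatrix},
\]
using $\partial_w\partial_{\bar w}(-\log\abs{\Im w})=\frac{1}{4(\Im w)^2}$; its $(z,\bar z)$ entry is $\geq\epsilon$ and its $(w,\bar w)$ entry blows up like $(\Im w)^{-2}$, the off-diagonal staying bounded. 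For the partition-of-unity corrections I would use $\sum_j\chi_j\equiv1$ to replace $h_j$ by $h_j-h_{j_0}$ in the terms involving $\partial\chi_j$ and $\partial\bar\partial\chi_j$; since $h_j-h_{j_0}$ depends only on the transverse variable and vanishes at $M$, these corrections contribute \emph{nothing} to the $(z,\bar z)$ entry and only bounded amounts elsewhere. Thus near $M$ the Hessian is $\left(\begin{smallmatrix} a & b\\ \bar b & c\end{smallmatrix}\right)$ with $a\geq\epsilon/2$, $\abs{b}$ bounded and $c\to+\infty$, whence $a>0$ and $ac-\abs{b}^2\to+\infty$, so it is positive definite. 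Finally $h\to+\infty$ at $M$, and taking $U':=\{h>c_0\}\cup M$ for $c_0$ large makes $h$ a strictly plurisubharmonic exhaustion of $U'\setminus M$ with value $+\infty$ on $M$.

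The delicate point, and the expected main obstacle, is exactly this gluing: a priori the chart changes could spoil the leafwise positive curvature. What rescues the argument is the observation that the transition discrepancies of $h$ are leaf-independent and die at $M$, so they never touch the $(z,\bar z)$ second derivative, while the transverse positivity diverges like $(\text{distance})^{-2}$ and dominates every bounded error. Making the ``higher order in $\Im w$'' estimates uniform, via compactness of $M$ and the uniform lower bound $\varphi\geq2\epsilon$, is the technical crux.
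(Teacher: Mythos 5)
The paper does not prove this statement: it is imported from Brunella \cite{Brunella-ample}, with only the remark that the proof constructs several local strictly plurisubharmonic exhaustion functions on the complement of $M$ and that the difficult part is to glue them. Your proposal is a correct reconstruction along exactly those lines — local functions $h_j=\tfrac12\sigma_j-\log\abs{\Im w_j}$ in straightened foliation charts, glued by a partition of unity, with the discrepancies $h_j-h_k$ controlled because they are functions of the transverse variable alone that vanish on $M$ — so it follows essentially the same route the paper attributes to Brunella. One small correction: the term $\sum_j(h_j-h_{j_0})\,i\partial\bar\partial\chi_j$ \emph{does} contribute to the $(z,\bar z)$ entry of the Hessian (transversality of $h_j-h_{j_0}$ kills only the $\partial\chi_j\wedge\bar\partial(h_j-h_{j_0})$ terms there), but this contribution is $O(\abs{\Im w})$ near $M$ and is absorbed by your $\epsilon/2$ margin, so the argument stands.
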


We remark that Brunella proves this result in $\cal C^{2,\alpha}$ regularity. The idea is to construct several local strictly plurisubharmonic exhaustion functions on the complement of $M$ and the difficult part is to glue them together.

The only step that we are left then in order to prove Theorem \ref{thm:complementfortementpconvLP} is the third. We will extend the metric on $N_{\cal F}$ with positive curvature along the leaves to a metric on the normal bundle of the extended foliation with positive curvature in every direction. The continuity of the curvature will be needed.

But before doing this, let us give some examples of applications of Theorem \ref{thm:complementfortementpconvLP}.

\newpage

\subsection{Examples of modified Stein complements}\label{sec:exemples}

\mbox{}\vspace{1em}

\paragraph{\textbf{$\C\Proj^1$-bundles with real monodromy}}
In this paragraph we consider the flat bundles over a curve $\Sigma$ associated to the real representation $\rho:\pi_1(\Sigma)\to\mathrm{PSL}(2,\R)$ that we introduced in Example \ref{exe:fibre_plat_monodromie_reelle}.

In this case, the hypersurface $M:=\Sigma\ltimes_\rho\R\Proj^1$ of $X:=\Sigma\ltimes_\rho\C\Proj^1$ is a Levi-flat hypersurface and cuts $X$ into two domains $D^\pm:=\Sigma\ltimes_\rho\Hip^\pm$, where $\Hip^\pm:=\{z\in\C\mid\pm\Im z>0\}$ are the upper and lower half-planes.\footnote{Remark that we could adapt our arguments to the case of representations with values in $\mathrm{PGL}(2,\R)$, which would allow us to produce examples of non orientable Levi-flat hypersurfaces that don't cut $X$ into two connected components $D^\pm$ but define a unique component $D$ instead. We will not be doing this in order to simplify the exposition.} We have seen that $M$ admits a transverse invariant measure only in the case where $\rho$ is elementary, see Example \ref{exe:fibre_plat_monodromie_reelle}. We obtain then

\begin{coro}\label{cor:exemples}
If $\rho$ is non elementary, then the domains $D^\pm$ are modifications of Stein domains.
\end{coro}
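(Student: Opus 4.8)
The plan is to apply the main Theorem \ref{thm:complementfortementpconvLP} directly to this concrete family of examples, so the bulk of the work is identifying the hypotheses and verifying that the relevant objects are analytic and compact. First I would observe that $X = \Sigma \ltimes_\rho \C\Proj^1$ is a compact complex surface: it is the total space of a flat $\C\Proj^1$-bundle over the compact Riemann surface $\Sigma$, constructed as the quotient of $\D \times \C\Proj^1$ by the properly discontinuous, free, holomorphic $\pi_1(\Sigma)$-action $(z,t) \mapsto (\gamma \cdot z, \rho(\gamma) t)$. Since $\rho$ takes values in $\mathrm{PSL}(2,\R)$, the real circle $\R\Proj^1 = \s^1 \subset \C\Proj^1$ is preserved by the monodromy, so $M := \Sigma \ltimes_\rho \R\Proj^1 = p(\D \times \s^1)$ is a well-defined compact real analytic hypersurface in $X$, and it is Levi-flat with CR foliation $\cal F$ coming from the horizontal flat structure (the leaves are the images of the graphs of the flat sections). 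This is exactly the setting already recorded in the third example of Section \ref{sec:LeviPlats}.

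Next I would invoke the dynamical dichotomy recorded in Example \ref{exe:fibre_plat_monodromie_reelle}: the hypersurface $M = \Sigma \ltimes_\rho \R\Proj^1$ admits a transverse invariant measure if and only if the representation $\rho$ is elementary. The content of the corollary is precisely the non-elementary case, so by that example $\cal F$ has \emph{no} transverse invariant measure; that is, $M$ is chaotic. All the hypotheses of Theorem \ref{thm:complementfortementpconvLP} are then met: $X$ is a compact complex surface, $M \subset X$ is a compact real analytic Levi-flat hypersurface, and $M$ carries no transverse invariant measure. Applying that theorem yields that the connected components of $X \setminus M$ are modifications of Stein spaces.

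Finally I would match this conclusion to the statement about $D^\pm$. The hypersurface $M$ separates $X$ into the two half-plane bundles $D^\pm = \Sigma \ltimes_\rho \Hip^\pm$, and these are exactly the connected components of $X \setminus M$: indeed $\C\Proj^1 \setminus \s^1 = \Hip^+ \sqcup \Hip^-$ and the monodromy $\rho(\gamma) \in \mathrm{PSL}(2,\R)$ preserves each half-plane, so each $\Hip^\pm$-bundle is connected, open, and $\pi_1(\Sigma)$-invariant, making $D^+$ and $D^-$ the two components of the complement. Therefore each $D^\pm$ is a modification of a Stein domain, which is the assertion.

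I do not expect a genuine obstacle here, since the corollary is a direct specialization of the main theorem; the only points requiring care are purely verificatory. The step most deserving of attention is confirming that the connected components of $X \setminus M$ are precisely $D^\pm$ (rather than some finer decomposition) and that $M$ is embedded and real analytic so that Theorem \ref{thm:complementfortementpconvLP} applies verbatim; both follow from the explicit flat-bundle description, but it is worth stating them. Everything dynamical—the absence of a transverse invariant measure in the non-elementary case—is already supplied by Example \ref{exe:fibre_plat_monodromie_reelle}, so no new dynamical input is needed.
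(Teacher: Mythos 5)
Your proof is correct and is essentially identical to the paper's own argument: the corollary is stated there as an immediate consequence of Theorem \ref{thm:complementfortementpconvLP} combined with the dichotomy of Example \ref{exe:fibre_plat_monodromie_reelle} (transverse invariant measure exists if and only if $\rho$ is elementary). Your additional verifications --- that $X$ is compact, $M$ is embedded real analytic, and the components of $X\setminus M$ are exactly $D^\pm$ --- are the same facts the paper takes for granted from the flat-bundle description.
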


This result was shown by Diederich and Ohsawa in \cite{Diederich-Ohsawa} in the case where $\rho$ is a faithful and discrete representation with values in $\mathrm{PSL}(2,\R)$, i.e. corresponding to the uniformisation of a curve of the same genus as $\Sigma$. In an earlier article \cite{Diederich-Ohsawa-weak} they have shown, without conditions on $\rho$, that the domains $D^\pm$ are always weakly pseudoconvex. In the two cases, the argument lies on the existence of harmonic equivariant maps. In the case where the monodromy is faithful and discrete, the argument to show strongly pseudoconvexity lies on the Schoen-Yau theorem, which says that a harmonic equivariant map between two hyperbolic compact surfaces is a diffeomorphism \cite{Schoen-Yau}.

It is interesting to remark that the components $D^\pm$ are not always Stein surfaces, even though they are minimal (i.e. they do not contain a rational curve). Indeed, it can happen that these domains contain sections of the natural fibration $X\to\Sigma$.

To see this, take an integer $0\leq k<2g-2$, where $g$ is the genus of $\Sigma$, and $k$ points $p_1,\ldots,p_k$ of $\Sigma$ that we assume distinct for simplicity. A theorem of Troyanov assures then that there exists a unique hermitian metric $\mathbf{g}$ on $\Sigma\setminus\{p_1,\ldots,p_k\}$ of curvature $-1$ with conic singularities of angle $4\pi$ on the $p_i$'s. See \cite{Troyanov} for more details.

The fact that these conic angles are multiples of $2\pi$ shows that on the universal cover $\widetilde{\Sigma}$ of $\Sigma$, the metric $\mathbf{g}$ is the preimage of the Poincaré metric $\frac{\abs{\d z}^2}{(\Im z)^2}$ on the half-plane $\Hip^+$ by a holomophic map $D:\widetilde{\Sigma}\to\Hip^+$, i.e.
\[\pi^*\mathbf{g} = D^*\frac{\abs{\d z}^2}{(\Im z)^2}\]
where $\pi:\widetilde{\Sigma}\to\Sigma$ is the cover map. Since the metric $\pi^*\mathbf{g}$ is invariant by the fundamental group of $\Sigma$, the map $D$ is equivariant by some representation
\[\rho:\pi_1(\Sigma)\to\mathrm{Isom}^+\left(\Hip^+,\frac{\abs{\d z}}{(\Im z)^2}\right)\simeq\mathrm{PSL}(2,\R),\]
called the holonomy of the conic metric.

The $\rho$-equivariant map $D:\widetilde{\Sigma}\to\Hip^+$ defines a holomorphic section $\Sigma\to D^+=\Sigma\ltimes_\rho\Hip^+$. The representation $\rho$ is of Euler class $k-2g+2$ and by consequence it is non elementary. Hence, the domain $D^+$ is an example of a modification of a Stein domain that is minimal, but admitting an exceptional set containing a curve of genus $g\geq2$.

\vspace{1em}

\paragraph{\textbf{Torus bundles}}
This example, given by Nemirovski\u{\i} \cite{Nemirovskii}, is a Levi-flat hypersurface with Stein complement in a complex surface obtained by taking the quotient of a line bundle over an elliptic curve. What is interesting is that this Levi-flat has algebraic curves, showing that our Theorem \ref{thm:complementfortementpconvLP} is not optimal.

Let $\Sigma$ be a compact Riemann surface and $L\to\Sigma$ a holomorphic line bundle. We consider a meromorphic section $s:\Sigma\to L$ with only simple zeros and poles. We note $Z$ the zeroes and $P$ the poles and we suppose that there exists at least a zero or a pole. Let $\Sigma^*=\Sigma\setminus(Z\cup P)$. We note $L^*$ the fiber bundle $L$ without the zero section. On each fiber $L^*_x$ of $L^*$ over $x\in\Sigma^*$, we consider the real line passing by $s(x)$ that we note $l_x$. The set $\R s=\{l_x\}_{x\in\Sigma^*}$ of all lines is an analytic Levi-flat whose leaves are biholomorphic to $\Sigma^*$. The set $E:=\overline{\R s}$ is an analytic Levi-flat in the bundle $L^*$.

We consider the equivalence relation on $L^*$ given by $p\sim 2p$. We note $X:=L^*/\sim$ the quotient space that is then a torus bundle over $\Sigma$. The quotient of $\overline{\R s}$ by this relation is a Levi-flat hypersurface that cuts $X$ into two Stein domains. Indeed, every connected component of the complement is a trivial fibration in annuli over $\Sigma^*$. The annulus and the surface $\Sigma^*$ are both open Riemann surfaces and hence they are Stein by a theorem of Behnke and Stein. Finally, the product of two Stein spaces is Stein.

\subsection{Local extension of the metric with positive curvature }

To extend a metric with positive curvature along the leaves on $N_{\cal F}$ to a metric with positive curvature in every direction on $N_{\cal G}$, where $\cal F$ is the CR foliation and $\cal G$ is the local extension of $\cal F$, we will use the following lemmas. The first one is a classical result that we can find in \cite{LinsNeto}:

\begin{lem}[Local description of Levi-flats]\label{lem:formelocaleLeviPlat}
Let $X$ be a complex surface and $M$ a real analytic Levi-flat hypersurface in $X$. For all $p\in M$ there exist a neighbourhood $U_p$ of $p$ homeomorphic to a ball, and a holomorphic function $H=u+iv$ defined in $U_p$ such that $\d H(p)\neq0$ and $M\cap U_p=\{v=0\}$.
\end{lem}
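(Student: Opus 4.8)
\textit{Proof proposal.} The plan is to manufacture, in a neighbourhood of $p$, a holomorphic first integral of the CR foliation whose fibres are the leaves, and then to straighten the image of $M$ under this integral to the real axis; the imaginary part of the resulting function will cut out $M$. First I would invoke Proposition \ref{prop:extentionvoisinage} to extend $\cal F$ to a non-singular holomorphic foliation $\cal G$ on a neighbourhood $U$ of $p$ with $\cal G|_M=\cal F$. By the holomorphic Frobenius theorem in the form of Proposition \ref{prop:defparsubmersions}, after shrinking $U$ there is a holomorphic submersion $H_0\colon U\to\C$ whose fibres are the leaves of $\cal G$; normalising, $H_0(p)=0$ and $\d H_0(p)\neq0$. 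Since $\cal G|_M=\cal F$, the function $H_0$ is constant along each leaf of $\cal F$.

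The heart of the argument is to identify the image $\gamma:=H_0(M\cap U)$. This is exactly where Levi-flatness is used: the leaves of $\cal F$ are \emph{complex} curves, hence they are entire fibres of the submersion $H_0$, so $H_0$ is injective on the (one-real-dimensional) leaf space $M/\cal F$. Concretely, I would fix a real-analytic arc $\tau\subset M$ through $p$ transverse to $\cal F$ (a local transversal meeting each nearby leaf once); then $H_0|_\tau\colon\tau\to\C$ is real analytic and, because $\tau$ is transverse to $\ker\d H_0\supset T_pL_p$, its derivative at $p$ is nonzero. Thus $H_0|_\tau$ is a real-analytic immersion of a $1$-manifold, and $\gamma=H_0(M\cap U)=H_0(\tau)$ is a regular real-analytic arc through $0$ in $\C$.

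Next I would straighten $\gamma$. Writing $\gamma$ as the image of a real-analytic immersion $\psi\colon(-\varepsilon,\varepsilon)\to\C$ with $\psi(0)=0$, $\psi'(0)\neq0$, the map $\psi$ extends holomorphically to a local biholomorphism of $(\C,0)$, and $\phi:=\psi^{-1}$ sends $\gamma$ into $\R$. Setting $H:=\phi\circ H_0$, I obtain a holomorphic function on (a possibly smaller) $U$ with $\d H(p)=\phi'(0)\,\d H_0(p)\neq0$ and $\Im H\equiv0$ on $M\cap U$. Writing $H=u+iv$, the Cauchy–Riemann equations give that $\d u(p)$ and $\d v(p)$ are linearly independent, so in particular $\d v(p)\neq0$ and $\{v=0\}$ is a smooth real hypersurface through $p$ containing $M\cap U$. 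Since $M$ is itself a smooth real hypersurface, a connected hypersurface contained in another of the same dimension coincides with it near $p$; choosing $U_p$ a small ball inside $U$ then yields $M\cap U_p=\{v=0\}$, as required.

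I expect the main obstacle to be the passage in the second paragraph: showing that $\gamma=H_0(M\cap U)$ is genuinely one-real-dimensional and \emph{regular}, rather than spreading out into an open set or degenerating to a point. This is precisely the step that encodes the Levi-flat hypothesis, since it relies on the leaves being holomorphic (hence full fibres of $H_0$) and on transversality between $M$ and the leaves; once this is established, the real-analytic straightening and the final comparison of hypersurfaces are routine. (If one prefers to avoid invoking Proposition \ref{prop:extentionvoisinage}, the same submersion $H_0$ can be produced directly by complexifying a real-analytic defining function of $M$ and using that, for a Levi-flat hypersurface, the Segre varieties through points of a leaf coincide, which furnishes the holomorphic first integral; the remainder of the argument is unchanged.)
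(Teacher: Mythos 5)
Your proposal is correct, but note that the paper itself offers no proof of this lemma: it is quoted as a classical result with a pointer to \cite{LinsNeto}, so there is no internal argument to compare against. Your write-up is essentially the standard proof behind that citation: extend the CR foliation holomorphically, take a local holomorphic first integral $H_0$, show that $H_0(M)$ is a regular real-analytic arc, and straighten that arc into $\R$ by the local biholomorphism obtained by complexifying its parametrization; the final open-and-closed comparison of the two hypersurfaces is also the standard conclusion. Two minor points deserve a word. First, the claim that leaves of $\cal F$ are \emph{entire} fibres of $H_0$ is a priori too strong: a plaque of $\cal F$ is only an open subset of a fibre, and upgrading this requires an open-and-closed argument inside the fibre (openness because a plaque of $\cal F$ passes through every point of $M$, closedness because $M$ is closed); however, your argument does not actually need this, since the inclusion $H_0(M\cap U')\subset H_0(\tau)$ already follows from $H_0$ being constant on plaques together with the fact that every plaque in a small enough neighbourhood of $p$ meets the transversal $\tau$. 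Second, regarding possible circularity: in the source literature the local normal form and the local extension of the CR foliation are closely intertwined, so one should be careful when invoking Proposition \ref{prop:extentionvoisinage} here; within this paper this is harmless, as that proposition is quoted as an independent external result, and your parenthetical alternative via complexification of a defining function shows the argument can be made self-contained in any case.
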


The second one gives a condition for the Levi form of a function to be positive. We recall that the Levi form of a function $f$ defined on an open set $U\subset\C^2$ is the hermitian form over $\C^2$ defined by
\[L_f(p)(\zeta_1,\zeta_2)=
\frac{\partial^2f(p)}{\partial z\partial\bar z}\zeta_1\bar \zeta_1
+\frac{\partial^2f(p)}{\partial z\partial\bar w}\zeta_1\bar \zeta_2
+\frac{\partial^2f(p)}{\partial w\partial\bar z}\zeta_2\bar \zeta_1
+\frac{\partial^2f(p)}{\partial w\partial\bar w}\zeta_2\bar \zeta_2.\]
We say that $L_f$ is positive on $U$ if $L_f(p)(\zeta_1,\zeta_2)>0$ for all $p\in U$ and every $(\zeta_1,\zeta_2)\in\C^2\setminus\{(0,0)\}$. Moreover we have the relation
\[i\partial\bar\partial f\wedge i\alpha\wedge\bar\alpha=L_f(\alpha_1.\alpha_2)\vol_X\]
for every 1-form $\alpha=-\alpha_2\d z+\alpha_1\d w$. So the Levi form of $f$ is positive if and only if the 
(1,1)-form $i\partial\bar\partial f$ is positive.

\begin{lem}\label{lem:formeLeviestpositive}
If $\frac{\partial^2f(p)}{\partial z\partial\bar z} > 0$ and if $\abs{\frac{\partial^2f(p)}{\partial z\partial\bar w}}^2 < \frac{\partial^2f(p)}{\partial z\partial\bar z}\frac{\partial^2f(p)}{\partial w\partial\bar w}$ for all $p \in U$, then the (1,1)-form $i\partial\bar\partial_X f$ is positive on $U$.
\end{lem}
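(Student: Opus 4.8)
The plan is to recognize $L_f(p)$ as a Hermitian quadratic form on $\C^2$ and to show it is positive definite under the two hypotheses, since positivity of $i\partial\bar\partial_X f$ is equivalent to positivity of the Levi form by the relation recalled just above the statement. Writing $a=\frac{\partial^2 f(p)}{\partial z\partial\bar z}$, $b=\frac{\partial^2 f(p)}{\partial z\partial\bar w}$ and $c=\frac{\partial^2 f(p)}{\partial w\partial\bar w}$, and using that $f$ is real-valued so that $\frac{\partial^2 f(p)}{\partial w\partial\bar z}=\bar b$, the Levi form reads
\[
L_f(p)(\zeta_1,\zeta_2)=a\abs{\zeta_1}^2+b\,\zeta_1\bar\zeta_2+\bar b\,\zeta_2\bar\zeta_1+c\abs{\zeta_2}^2,
\]
which is the Hermitian form associated to the matrix $\left(\begin{smallmatrix} a & b \\ \bar b & c\end{smallmatrix}\right)$.

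First I would reduce the positivity of this form to a completion of squares. Since $a>0$ by the first hypothesis, one can write
\[
L_f(p)(\zeta_1,\zeta_2)=a\,\abs{\zeta_1+\tfrac{\bar b}{a}\zeta_2}^2+\Big(c-\tfrac{\abs b^2}{a}\Big)\abs{\zeta_2}^2.
\]
The first term is $\geq 0$ and vanishes exactly when $\zeta_1=-\tfrac{\bar b}{a}\zeta_2$, while the coefficient $c-\abs b^2/a$ of the second term is $>0$ precisely because the second hypothesis $\abs b^2<ac$ holds and $a>0$.

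It then follows that both terms are nonnegative and vanish simultaneously only when $\zeta_2=0$ and $\zeta_1=0$; hence $L_f(p)(\zeta_1,\zeta_2)>0$ for every $(\zeta_1,\zeta_2)\neq(0,0)$, and this holds at every $p\in U$. By the equivalence recalled before the statement, this is exactly the positivity of $i\partial\bar\partial_X f$ on $U$.

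I do not expect any genuine obstacle here: the statement is the two-variable instance of Sylvester's criterion for a Hermitian matrix, the two hypotheses being precisely the positivity of its two leading principal minors $a$ and $\det=ac-\abs b^2$. The only point deserving a word of care is that $L_f$ is genuinely Hermitian, which uses that $f$ is real so that the two mixed derivatives are complex conjugate; apart from that, the argument is the elementary completion of squares above.
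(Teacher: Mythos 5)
Your proof is correct, and it differs in execution from the paper's, though both amount to the $2\times 2$ Sylvester criterion. The paper also reduces to positive definiteness of $L_f(p)$, but argues by cases: for $\zeta_1=0$ the form equals $\abs{\zeta_2}^2\frac{\partial^2 f}{\partial w\partial\bar w}>0$; for $\zeta_1\neq0$ it uses homogeneity to reduce to $L_f(p)(1,\alpha)$, bounds this below by the real quadratic $\frac{\partial^2 f}{\partial z\partial\bar z}-2\abs{\frac{\partial^2 f}{\partial w\partial\bar z}}\abs{\alpha}+\frac{\partial^2 f}{\partial w\partial\bar w}\abs{\alpha}^2$ in the variable $\abs{\alpha}$, and concludes because the leading coefficient is positive and the discriminant is negative, the latter being exactly the second hypothesis. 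Your completion of squares replaces this dehomogenization-plus-discriminant step by an exact identity: no case split, no lossy estimate on the cross term, and the directions where the form would degenerate are exhibited explicitly; the paper's version only requires reasoning about a one-variable real quadratic, but is otherwise equivalent, since the negativity of its discriminant is the same inequality as your $c-\abs{b}^2/a>0$. One detail you handle more carefully: you state explicitly that the Hermitian symmetry $\frac{\partial^2 f}{\partial w\partial\bar z}=\overline{\frac{\partial^2 f}{\partial z\partial\bar w}}$ comes from $f$ being real-valued. The paper uses this silently, both when writing the cross terms of $L_f(p)(1,\alpha)$ as $2\Re\left(\frac{\partial^2f}{\partial w\partial\bar z}\alpha\right)$ and when passing between $\abs{\frac{\partial^2 f}{\partial z\partial\bar w}}$ (in the hypothesis) and $\abs{\frac{\partial^2 f}{\partial w\partial\bar z}}$ (in the proof); since the statement of the lemma does not say that $f$ is real, flagging this hypothesis is a small but genuine improvement --- in the paper's application, $f=-\log\left(\tilde m\exp(-Cd_M^2)\right)$, it is of course satisfied.
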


\begin{proof}
We remark that with these hypothesis $\frac{\partial^2f(p)}{\partial z\partial\bar z}>0$ if and only if $\frac{\partial^2f(p)}{\partial w\partial\bar w}>0$.
To prove the lemma it suffices to show that $L_f(p)>0$ on $\C^2\setminus\{(0,0)\}$. If $\alpha_1=0$ and $\alpha_2\neq0$ then $L_f(p)(\alpha_1,\alpha_2)=\abs{\alpha_2}^2\frac{\partial^2f}{\partial w\partial\bar w}$ and we are done. Now, if $\alpha_1\neq0$ then $L_f(p)(\alpha_1,\alpha_2)=\abs{\alpha_1}^2L_f(p)(1,\frac{\alpha_2}{\alpha_1})$. It is sufficient then to prove
\[L_f(p)(1,\alpha) = \frac{\partial^2f}{\partial z\partial\bar z}+2\Re\left(\frac{\partial^2f}{\partial w\partial\bar z}\alpha\right)+\frac{\partial^2f}{\partial w\partial\bar w}\abs{\alpha}^2 > 0\]
for all $\alpha\in\C$. We remark that
\[L_f(p)(1,\alpha)\geq\frac{\partial^2f}{\partial z\partial\bar z}-2\abs{\frac{\partial^2f}{\partial w\partial\bar z}}\abs{\alpha}+\frac{\partial^2f}{\partial w\partial\bar w}\abs{\alpha}^2.\]
This quadratic polynomial is positive because its first coefficient is and its discriminant is negative.
\end{proof}

Now we can extend our metric:

\begin{prop}\label{prop:extensionmetrique}
Let $X$ be a complex surface, $M$ a real analytic Levi-flat hypersurface in $X$ and $\cal F$ its Cauchy-Riemann foliation. We suppose that the normal bundle $N_{\cal F}$ has a metric with positive curvature in the direction of the leaves. Then there exists a neighbourhood $U'$ of $M$ in $X$ and a non-singular holomorphic foliation $\cal G$ on $U'$ such that $\cal G|_M=\cal F$ and the normal bundle $N_{\cal G}$ has a metric with positive curvature in every direction.
\end{prop}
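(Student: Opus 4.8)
The plan is to first extend the foliation and then correct the given metric so that its curvature becomes positive transversally as well as along the leaves. By Proposition \ref{prop:extentionvoisinage} there is a neighbourhood $U$ of $M$ and a non-singular holomorphic foliation $\cal G$ on $U$ with $\cal G|_M=\cal F$; this is the foliation we keep, possibly shrinking $U$ later. Transporting the given metric on $N_{\cal F}$ to a metric $m_0=e^{-\sigma_0}\abs{\xi}^2$ on $N_{\cal G}$ over $U$, by extending the local weight $\sigma_0(z,w)$ smoothly off $M$, its curvature $\Theta_{m_0}=\frac{i}{2\pi}\partial\bar\partial_X\sigma_0$ already satisfies $\frac{\partial^2\sigma_0}{\partial z\partial\bar z}>0$ on $M$: the leaf coordinate directions $z,\bar z$ are tangent to $M$, so this leafwise Hessian is intrinsic to $M$ and equals the positive leaf curvature provided by the hypothesis, regardless of the chosen extension. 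What is missing is positivity in the transverse direction and control of the mixed term, which is exactly the content of Lemma \ref{lem:formeLeviestpositive}. Thus the whole problem reduces to adding to $\sigma_0$ a correction that is strictly plurisubharmonic in the transverse direction but invisible along the leaves on $M$.

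For the local model I would use Lemma \ref{lem:formelocaleLeviPlat}: around each $p\in M$ there is a holomorphic $H_j=u_j+iv_j$ with $M\cap U_j=\{v_j=0\}$ and $\d H_j(p)\neq0$. Since the leaves of $\cal F$ are the complex curves $\{H_j=\text{const}\}$ with $H_j$ real, the function $H_j$ is a holomorphic submersion defining $\cal G$ on $U_j$, and I take $w_j=H_j$ as a transverse holomorphic coordinate, so that the leaves are $\{w_j=\text{const}\}$ and $v_j=\Im w_j$ is a defining function for $M$. The correction I propose is $\sigma_C:=\sigma_0+C\phi$ with $\phi:=\sum_j\chi_j\,v_j^2$, for a partition of unity $\{\chi_j\}$ subordinate to a finite subcover (available since $M$ is compact) and a large constant $C$. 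The virtue of squaring $v_j$ is that $v_j=\Im H_j$ is pluriharmonic and constant along the leaves, so a direct computation gives $\partial\bar\partial(v_j^2)=2\,\partial v_j\wedge\bar\partial v_j$, a positive semidefinite $(1,1)$-form that vanishes in the leaf direction and is strictly positive in the transverse direction $\partial/\partial w_j$.

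I would then verify the two inequalities of Lemma \ref{lem:formeLeviestpositive} for $\sigma_C$ on $M$. Because every $v_j$ vanishes on $M$, along $M$ we have $\partial(v_j^2)=2v_j\partial v_j=0$, so all the terms coming from the derivatives of the cut-offs $\chi_j$, as well as the $v_j^2\,\partial\bar\partial\chi_j$ terms, vanish on $M$; hence on $M$ one gets $\partial\bar\partial\phi=\sum_j\chi_j\,\partial\bar\partial(v_j^2)$, which is zero in the leaf direction and uniformly positive in the transverse one. Consequently $\frac{\partial^2\sigma_C}{\partial z\partial\bar z}=\frac{\partial^2\sigma_0}{\partial z\partial\bar z}>0$ on $M$ independently of $C$, while $\frac{\partial^2\sigma_C}{\partial w\partial\bar w}$ grows linearly in $C$ and the mixed term $\frac{\partial^2\sigma_C}{\partial z\partial\bar w}$ stays bounded. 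Thus for $C$ large enough the inequality $\abs{\partial^2\sigma_C/\partial z\partial\bar w}^2<(\partial^2\sigma_C/\partial z\partial\bar z)(\partial^2\sigma_C/\partial w\partial\bar w)$ holds on $M$, and Lemma \ref{lem:formeLeviestpositive} makes $\Theta_{m_C}=\frac{i}{2\pi}\partial\bar\partial_X\sigma_C$ positive at every point of $M$. Since positivity is an open condition and $M$ is compact, it persists on a neighbourhood $U'\subset U$, and the metric $m_C=m_0\,e^{-C\phi}$ on $N_{\cal G}|_{U'}$ is the one we want.

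I expect the main obstacle to be precisely the interplay between the two directions of positivity: the correction needed to create transverse positivity is large ($C\to+\infty$), and one must guarantee that it neither spoils the leafwise positivity, which is all that is assumed, nor that the globalising partition of unity introduces uncontrolled mixed or leafwise second derivatives. The mechanism resolving both difficulties is the vanishing of $v_j$ and of $\partial v_j$ along the relevant directions on $M$: it makes $\partial\bar\partial\phi$ purely transverse on $M$ and kills all the partition-of-unity error terms there, so that the large parameter $C$ only ever acts transversally. Making this vanishing precise, and checking that the leafwise Hessian of $\sigma_0$ really is the intrinsic leaf curvature so that the hypothesis feeds directly into the first inequality of Lemma \ref{lem:formeLeviestpositive}, is the technical heart of the argument.
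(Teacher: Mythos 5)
Your proof is correct and takes essentially the same route as the paper: extend the foliation by Proposition \ref{prop:extentionvoisinage}, extend the metric so that its leafwise curvature on $M$ is the intrinsic one, then multiply by $e^{-C\varphi}$ with $\varphi$ a nonnegative function vanishing to second order exactly on $M$, and check the two inequalities of Lemma \ref{lem:formeLeviestpositive} on $M$ (leafwise Hessian and mixed term unchanged there, transverse Hessian growing with $C$), concluding by continuity and compactness. The only cosmetic difference is your choice of $\varphi=\sum_j\chi_j(\Im H_j)^2$ glued by a partition of unity, where the paper uses the squared distance $d_M^2$, which in the charts of Lemma \ref{lem:formelocaleLeviPlat} reads $\Im(w_j)^2h(z_j,w_j)$ and leads to the identical local computation.
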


\begin{proof}
We note $m_{\cal F}$ a metric on the normal bundle $N_{\cal F}$ on $M$ with positive curvature in the direction of the leaves.

By Proposition \ref{prop:extentionvoisinage} there exists a neighbourhood $U$ of $M$ in $X$ and a non-singular holomorphic foliation $\cal G$ defined on $U$ such that $\cal G|_M=\cal F$.

We take a cover of $U$ by charts $(U_j,(z_j,w_j))$ such that $M\cap U_j=\{\Im(w_j)=0\}$ as in Lemma \ref{lem:formelocaleLeviPlat}. In these charts we note $m_j:= m_{\cal F}|_{U_j}$, $m_j(z_j,t_j,\xi_j)=e^{-\sigma_j(z_j,t_j)}\abs{\xi_j}^2$.

We extend $m_j$ to $U_j$ by $\tilde m_j(z_j,w_j,\tilde\xi_j):=m_j(z_j,\Re(w_j))\abs{\tilde\xi_j}^2$. We consider a partition of unity $\{\phi_j\}_{j\in J}$ subordinate to the cover $\{U_j\}_{j\in J}$ and we define $\tilde m=\sum_j\phi_j\tilde m_j$. By continuity, this extends the metric $m_{\cal F}$ to a metric $\tilde m$ on the bundle $N_{\cal G}$ that has positive curvature in the direction of the leaves. To obtain a positive curvature in every direction we set
\[m_{\cal G}:=\tilde m\exp(-Cd_M^2)\]
where $d_M$ is the distance to $M$ (with respect to a fixed metric on $X$) and $C$ is a constant. The curvature of $m_{\cal G}$ is then equal to
\[\Theta_{m_{\cal G}}=\Theta_{\tilde m}+C{\frac{i}{2\pi}}\partial\bar\partial_X d_M^2,\]
that we will write in the following way
\[\frac{i}{2\pi}\partial\bar\partial_X\left[-\log(m_{\cal G})\right] = \frac{i}{2\pi}\partial\bar\partial_X\left[-\log(\tilde m\exp(-C d_M ^2))\right].\]
We will show that this $(1,1)$-form is positive when $C$ is big enough.
By Lemma \ref{lem:formeLeviestpositive} it is sufficient to verify the inequality 
\begin{multline*}
\abs{\frac{\partial^2}{\partial w\partial\bar z}\left[-\log(\tilde m)+Cd_M(\cdot)^2\right]}^2\\
< \left(\frac{\partial^2}{\partial z\partial\bar z}\left[-\log(\tilde m)
  +Cd_M(\cdot)^2\right]\right)\left(\frac{\partial^2}{\partial w\partial\bar w}\left[-\log(\tilde m)
  +Cd_M(\cdot)^2\right]\right)
\end{multline*}
on a neighbourhood of $M$. By continuity of these derivatives, it is sufficient to verify this inequality on $M$. Let $p\in U$ and $U_j$ be an open set of the cover of $U$ containing $p$. On $U_j$ we can write the distance $d_M$ in the following way
\[d_M^2(z_j(p),w_j(p))=\Im(w_j(p))^2 h(z_j(p),w_j(p))\]
where $h$ is smooth and positive. The laplacian in the direction of the leaves is then
\[\frac{\partial^2}{\partial z\partial\bar z}\left[-\log(\tilde m)+Cd_M(\cdot)^2\right]
= \frac{\partial^2}{\partial z\partial\bar z}[-\log(\tilde m)]
  +C\Im(w_j)^2\frac{\partial^2}{\partial z\partial\bar z}h(z_j,w_j).\]
In the point $p\in M$ the first term on the left hand side is a positive $(1,1)$-form, it is in fact the curvature of $\tilde m$ in the direction of the leaves. The second term is zero in $p\in M$ because $\Im(w_j)=0$. The laplacian in the direction of the leaves is then positive. Now, the transverse laplacian is equal to
\begin{multline*}
\frac{\partial^2}{\partial w\partial\bar w}\left[-\log(\tilde m)+Cd_M(\cdot)^2\right]
= \frac{\partial^2}{\partial w\partial\bar w}[-\log(\tilde m)]\\
  +\frac{C}{4}\left(2h(z_j,t_j)+(2w_j-2\bar w_j)\frac{\partial}{\partial w_j}h(z_j,w_j)
  -(2w_j-2\bar w_j)\frac{\partial}{\partial\bar w_j}h(z_j,w_j)\right.\\
  \left.+4\Im(w_j)^2\frac{\partial^2}{\partial w_j\partial\bar w_j}h(z_j,w_j)\right)
\end{multline*}
On $M$ we have $\Im(w_j)=0$, hence the transverse laplacian is equal to
\[
\frac{\partial^2}{\partial w\partial\bar w}\left[-\log(\tilde m)+Cd_M(\cdot)^2\right]
=\frac{\partial^2}{\partial w\partial\bar w}[-\log(\tilde m)]+\frac{C}{2}h(z_j,t_j)
\]
that is positive and big if $C$ is big enough. We calculate finally the mixed derivative
\begin{multline*}
\frac{\partial^2}{\partial w\partial\bar z}\left[-\log(\tilde m)+Cd_M(\cdot)^2\right]
=\frac{\partial^2}{\partial w\partial\bar z}[-\log(\tilde m)]\\
+C\left(\frac{-1}{4}(2w_j-2\bar w_j)\frac{\partial}{\partial\bar z}h(z_j,w_j)
+\Im(w_j)^2 \frac{\partial^2}{\partial w\partial\bar z} h(z_j,w_j)\right).
\end{multline*}
On $M$ this mixed derivative is equal to
\[
\frac{\partial^2}{\partial w\partial\bar z}\left[-\log(\tilde m)+Cd_M(\cdot)^2\right]
=\frac{\partial^2}{\partial w\partial\bar z}[-\log(\tilde m)].
\]
We see then that if we take a big constant $C$ the desired inequality is verified on $M$.
\end{proof}

\section{Extension of foliations}\label{sec:extensionglobale}

We show a result on the extension of foliations to strongly pseudoconvex domains in complex algebraic surfaces. We extend the foliation little by little through the levels of the strictly plurisubharmonic exhaustion function. Passing through the non critical levels is classic and consists in the construction of well placed Hartogs figures. We detail the gluing of extensions in the different Hartogs figures on a given level of the exhaustion function. We also detail how to pass through the critical levels of the exhaustion function, which doesn't use the delicate construction of Hartogs figures near critical points, see \cite{Siu-Trautman, LinsNeto, Merker-Porten, Ivashkovich-bochner}.

All this will allow us to extend the CR foliation of a chaotic analytic Levi-flat hypersurface in a complex algebraic surface to a complex analytic global foliation. This has been used by Lins Neto \cite{LinsNeto} to show the non existence of Levi-flat hypersurfaces in complex projective spaces of dimension $\geq 3$. In this case, the connected components of the complement of the Levi-flat hypersurface are Stein by a theorem of Takeuchi \cite{Takeuchi-projectif}.

\begin{thm}\label{thm:extension}
Let $X$ be a complex algebraic surface and $V\Subset X$ a strongly pseudoconvex domain. Let $K\subset V$ be a compact containing the exceptional set $A$ of $V$ and such that $U=V\setminus K$ is connected. Then every holomorphic foliation $\cal G$ on $U$ extends to a holomorphic foliation on $V$.
\end{thm}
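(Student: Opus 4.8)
The plan is to exhaust $V$ by superlevel sets of a strictly plurisubharmonic function and to push the foliation inward level by level, treating noncritical levels, critical levels, and the exceptional set separately. Since $V$ is strongly pseudoconvex, its Remmert reduction $\pi:V\to\hat V$ contracts the exceptional set $A$ to finitely many points of a Stein space $\hat V$; pulling back a strictly plurisubharmonic exhaustion of $\hat V$ yields a proper exhaustion $\rho:V\to\R$ that is strictly plurisubharmonic on $V\setminus A$ and locally constant on $A$. After a generic small perturbation away from $A$ we may assume that $\rho|_{V\setminus A}$ is Morse. Enlarging $K$ to the complement of a superlevel set, we may assume $\cal{G}$ is defined on $\{\rho>c_0\}$ for some $c_0$; by uniqueness of analytic continuation on the connected set $U$ it suffices to extend this restriction, since the resulting global foliation will automatically agree with $\cal{G}$ on all of $U$. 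We then consider the set of $c$ for which $\cal{G}$ extends to a holomorphic foliation on $\{\rho>c\}$ and show that it descends all the way down to $A$.

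First I would treat the noncritical descent. Describe $\cal{G}$ locally by holomorphic first integrals $g_j$ as in Proposition \ref{prop:defparsubmersions}, or by $1$-forms with $\omega_j=h_{jk}\omega_k$ as in Proposition \ref{prop:deffeuilletageformediff}. At a smooth point $p$ of a noncritical level $\{\rho=c\}$, strict plurisubharmonicity allows one to place a Hartogs figure straddling the level, with its smaller part inside $\{\rho>c\}$, so that the holomorphic coefficients of $\omega_j$ (equivalently, the first integral $g_j$) extend across the level by the continuity principle. The local extensions glue: on overlaps the extended forms satisfy the same multiplicative relations by analytic continuation of $\omega_j=h_{jk}\omega_k$, hence they define a single holomorphic foliation on $\{\rho>c-\epsilon\}$, possibly acquiring isolated singular points where the extended $\omega_j$ vanish. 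This yields extension across every noncritical value.

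The delicate point, and the main obstacle, is the passage through a critical value $c^*$ with unique Morse critical point $p^*\in V\setminus A$, where the level set is singular at $p^*$. Rather than constructing Hartogs figures adapted to the critical geometry, I would cross the level only at its smooth points: for small $\delta>0$ the level $\{\rho=c^*\}$ is smooth off $p^*$, so the noncritical argument extends $\cal{G}$ to $\{\rho>c^*-\epsilon\}\setminus\overline{B(p^*,\delta)}$, and in particular to a full spherical shell $B(p^*,\delta')\setminus\overline{B(p^*,\delta)}$ surrounding the ball $B(p^*,\delta)$. Since $\dim_\C V=2$, the classical Hartogs phenomenon extends the holomorphic coefficients of the defining forms from this shell across the ball, and the cocycle relations persist by analytic continuation; thus $\cal{G}$ extends to all of $\{\rho>c^*-\epsilon\}$, again as a holomorphic foliation with at most isolated singularities. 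Iterating the noncritical and critical passages over the finitely many critical values extends $\cal{G}$ to $V\setminus A$.

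It remains to extend across the compact exceptional set $A$, which has codimension $\geq 1$. Here I would view the foliation on $V\setminus A$ as a holomorphic section of the projectivized tangent bundle $\Proj(TV)\to V$, whose fibers are compact. A holomorphic map from the complement of an analytic set into a fiber space with compact (projective) fibers extends meromorphically across $A$ by the Levi--Remmert--Stein type extension theorems invoked above, see \cite{Siu-Trautman, Ivashkovich-bochner, Merker-Porten}; the indeterminacy locus of the resulting meromorphic section is contained in $A$ and of codimension $2$, hence a finite set of points. Equivalently, the defining forms $\omega_j$ extend meromorphically across $A$ and, after clearing the common divisorial factor, give holomorphic $1$-forms with isolated zeros satisfying the cocycle relation of Proposition \ref{prop:deffeuilletageformediff}. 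This produces a (possibly singular) holomorphic foliation on all of $V$ restricting to $\cal{G}$ on $U$, completing the proof.
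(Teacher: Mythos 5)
Your proof follows the same skeleton as the paper's (exhaustion by superlevel sets of a strictly plurisubharmonic function, Hartogs/Levi extension across noncritical levels, separate treatment of critical levels and of the exceptional set), and your noncritical step is essentially the paper's argument, modulo one gloss: the local forms $\omega_j$ do not assemble into a single form on the Hartogs figure, so the object that actually extends is the global meromorphic \emph{slope} function, via Levi's theorem for meromorphic functions rather than the continuity principle for holomorphic coefficients. The genuine gap is in your passage through critical levels, and it sits exactly at the point the paper flags as delicate. For a Morse critical point $p^*$ of index $1$ or $2$ (both occur for strictly plurisubharmonic functions; only indices $3,4$ are excluded), $\rho$ decreases quadratically in the negative directions of the Hessian. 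The noncritical argument applied to the smooth part of $\{\rho=c^*\}$ only yields a collar whose depth degenerates as one approaches $p^*$, since the Hartogs figures must shrink with the geometry of the level: at distance $\delta$ from $p^*$ the gain in depth is at best of order $\delta^2$, with a constant you do not control. On the other hand, every sphere of radius $\delta'$ around $p^*$ contains points, in the downward cone of directions, where $\rho\approx c^*-\delta'^2$. So your claim that the extended domain contains a full spherical shell $B(p^*,\delta')\setminus\overline{B(p^*,\delta)}$ is unjustified: in the negative directions the complement of the extended domain reaches the boundary of \emph{every} small ball around $p^*$, the ``hole'' is not compactly contained in any neighbourhood of $p^*$, and the Kugelsatz does not apply. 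Rescuing this would require precisely the quantitative construction of Hartogs figures near critical points (Siu--Trautman, Merker--Porten) that the paper deliberately avoids. The paper's substitute is the perturbation trick of Lemma \ref{lem:perturbation}: replace $\rho$ near $p_*$ by a strictly plurisubharmonic $\rho'$ whose unique critical point has a strictly lower critical value, so that $p_*$ lies in the interior of the superlevel set $\{\rho'>t'_*\}$; one then extends along the $\rho'$-levels, which are noncritical where they pass $p_*$, and glues the two extensions using the connectedness of $\{\rho>t_*\}\cap\{\rho'>t'_*\}$ --- no quantitative estimate is ever needed.

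Your last step also invokes a false principle: a holomorphic map from the complement of an analytic set into a space with compact fibers does \emph{not} in general extend meromorphically across that set when the set has codimension $1$ --- and here $A$ is a compact curve, not a finite set of points. Already $e^{1/z}$, viewed as a map $\D^2\setminus\{z=0\}\to\C\Proj^1$, has no meromorphic extension across $\{z=0\}$. What makes the extension possible is not the analyticity of $A$ but its exceptionality: the paper passes to the Remmert reduction $\pi:V\to Y$ (Proposition \ref{prop:reduccionderemmert}), where $\pi(A)$ \emph{is} a finite set of points in a normal Stein surface, transports the global meromorphic slope function $F$ (built from a projective embedding of $X$, writing the foliation as $\Omega_0+F\,\Omega_1$) to $Y\setminus\pi(A)$, extends it there by Ivashkovich's lemma (Lemma \ref{lem:extensionfonctionspacenormal}), and pulls back. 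Your projectivized-tangent-bundle formulation is fine, but the extension across $A$ must be routed through this contraction; as written, that step fails.
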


By Theorem \ref{thm:complementfortementpconvLP}, the connected components of the complement of a Levi-flat hypersurface whose normal bundle $N_{\cal G}$ has a metric with positive curvature are modifications of Stein domains. By a result of Coltoiu and Mihalache \cite{Coltoiu-Mihalache}, such a connected component has a continuous exhaustion function $\rho:V\to[-\infty,\infty)$, with value $-\infty$ on the exceptional set $A$, that is $\Cinf$ and strictly plurisubharmonic outside $A$. Moreover, perturbing $\rho$ on the set $V\setminus A$ in the topology $\cal C^2$, we can suppose that $\rho|_{V\setminus A}$ is a Morse function, i.e. its critical points are non degenerate and the levels $\rho^{-1}(t)$ have at most one critical point for $t\in\R$. This function will help us to extend the foliation, defined on a neighbourhood of the Levi-flat hypersurface, to the entire complex surface.

We will start by extending the foliation to $V$ without its exceptional set $A$ (sections \ref{sec:passageniveauNC} and \ref{sec:passageniveauC}). Next, we will extend the foliation to $A$ (section \ref{sec:extensionsurA}). Using Remmert's Reduction \cite[Section 2.1]{Peternell}, this will be done by extending a meromorphic function (representing the slope of the leaves) on a singular space.
To do this it will be fundamental to use Hartogs figures and Levi's Extension Theorem \cite{Levi} for meromorphic functions, which we recall now:

\begin{thm}[Levi's Extension Theorem]\label{thm:levi}
Let $X$ be a complex surface. Let $H$ be a Hartogs figure in $X$ and $f$ a meromorphic function on $H$. Then $f$ can be extended to a meromorphic function on $\hat H$.
\end{thm}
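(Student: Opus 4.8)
The plan is to prove Levi's Extension Theorem by reducing the meromorphic extension problem to the Riemann extension theorem via a two-sheeted argument: first extend the denominator and numerator data as holomorphic functions across the Hartogs figure, then reassemble the quotient. Recall that a Hartogs figure $H$ in $X$ (locally biholomorphic to a bidisc $\D^2$) and its envelope $\hat H\cong\D^2$ differ by the \emph{Hartogs phenomenon}: every \emph{holomorphic} function on $H$ extends holomorphically to $\hat H$. So the heart of the matter is that the poles of $f$ may approach the missing part of $\hat H$, and one cannot naively clear denominators. My first step would be to fix coordinates $(z,w)$ on $\D^2$ in which $H$ has the standard form $H=\{|z|<1,\ r<|w|<1\}\cup\{|z|<\epsilon,\ |w|<1\}$ for suitable $0<r<1$, $0<\epsilon<1$, so that for each fixed $z$ with $|z|<\epsilon$ the full disc $\{|w|<1\}$ lies in $H$, while for $|z|\geq\epsilon$ only the annulus is available.

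Next I would analyze the pole set $P$ of $f$ on $H$, which is a (possibly empty) analytic hypersurface. The key observation is that for $z$ near the central slice $|z|<\epsilon$, the restriction $f(z,\cdot)$ is meromorphic on the full disc $|w|<1$, so the number of poles of $f(z,\cdot)$ in $|w|<\rho$ (counted with multiplicity) is finite and, by the argument principle together with continuity, \emph{locally constant} in $z$. The plan is to use this to show the poles stay in a compact part of the $w$-disc uniformly in $z$ over a neighborhood of the central slice, which prevents poles from escaping to the boundary as $z$ moves. Concretely, I would write $f=g/h$ locally with $g,h$ holomorphic and relatively prime, and track the zero divisor of $h$; the argument principle applied to $w\mapsto f(z,w)$ on suitable circles $|w|=\rho$ gives a holomorphic-in-$z$ (by the Cauchy integral representation of the symmetric functions of the poles) control, so the polar locus extends as an analytic set $\hat P$ to $\hat H$. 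This is the step I expect to be the main obstacle: one must rule out the scenario in which a pole of $f(z,\cdot)$ on the annulus (for $|z|$ large) fails to be the limit of poles coming from the central slice, i.e. one must establish that the polar divisor propagates coherently across the figure rather than appearing and disappearing; this is exactly where the geometry of the Hartogs figure (the connectedness of $H$ and the full slices over the central region) is used.

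Once the polar set is extended to an analytic hypersurface $\hat P\subset\hat H$, I would produce the meromorphic extension as follows. Away from $\hat P$ the function $f$ is holomorphic, and the Hartogs extension theorem extends it holomorphically across $\hat H\setminus\hat P$ once we know $\hat H\setminus\hat P$ is connected and contains $H\setminus P$; thus $f$ extends holomorphically to $\hat H\setminus\hat P$. It remains to check that this extension is genuinely meromorphic along $\hat P$, i.e. that it has poles of finite order and not an essential singularity there. For this I would choose locally a holomorphic defining function $u$ for $\hat P$ and show, using the bounded pole-order information inherited from $H$ (the multiplicities being locally constant by the argument-principle bookkeeping of the previous step), that $u^k f$ is holomorphic across $\hat P$ for $k$ large. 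Then $f=(u^k f)/u^k$ exhibits $f$ as a quotient of holomorphic functions on $\hat H$, hence as a meromorphic function on $\hat H$ restricting to the original $f$ on $H$, which completes the proof.
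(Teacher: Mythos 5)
The paper does not prove this statement: Theorem \ref{thm:levi} is recalled as a classical result with a citation to \cite{Levi}, so your proposal has to stand on its own against the classical argument, and measured that way it has a genuine gap exactly at the step you yourself flag as ``the main obstacle''. In your normalized coordinates, $H=\{|z|<1,\ r<|w|<1\}\cup\{|z|<\epsilon,\ |w|<1\}$ and the unknown region of $\hat H$ is the hole $\{\epsilon\leq|z|<1,\ |w|\leq r\}$. Every tool you invoke to control the polar set there --- the argument principle on circles $|w|=\rho$, the Cauchy-integral representation of the power sums $\sum_j w_j(z)^k$ of the poles in $|w|<\rho$ --- requires the full slice $\{z\}\times\{|w|\leq\rho\}$ to lie in the domain where $f$ is known, hence is available only for $|z|<\epsilon$. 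You therefore obtain data holomorphic on the small disc $|z|<\epsilon$, and nothing in the proposal extends it across $\epsilon\leq|z|<1$: a holomorphic function on $\D_\epsilon$ does not automatically extend to $\D$, and ``local constancy of the pole count'' is meaningless for $|z|\geq\epsilon$ because the poles inside the hole are invisible from $H$. So the assertion that the polar locus extends to an analytic hypersurface $\hat P\subset\hat H$ is assumed rather than proved; it is essentially Rothstein's extension theorem for hypersurfaces in Hartogs figures, a result of the same depth as Levi's theorem itself, so invoking it makes the argument circular in spirit.

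The missing propagation mechanism, in Levi's classical proof, is an identity-theorem argument on data computable in the annulus. Set $G(z,w):=\frac{1}{2\pi i}\oint_{|w'|=\rho}\frac{f(z,w')}{w'-w}\,\d w'$ for $|w|<\rho$; this is holomorphic in $(z,w)$ for all $z$ off the thin set where a pole of $f(z,\cdot)$ meets the circle (and one varies $\rho$ to cover those $z$). For $|z|<\epsilon$ the difference $R:=G-f$ is the sum of the principal parts of the poles of $f(z,\cdot)$ in $|w|<\rho$, i.e.\ a rational function of $w$ of some bounded degree vanishing at infinity. Kronecker's criterion detects such rationality through the vanishing of Hankel determinants formed from the expansion coefficients of $R$ in $w$, and --- this is the point --- those coefficients are holomorphic in $z$ on the \emph{whole} disc $|z|<1$, because they are computed by integrals over circles lying in the annulus $r<|w|<1$ where $f$ is known. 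The Hankel determinants vanish for $|z|<\epsilon$, hence identically by the identity theorem; so $R(z,\cdot)$ is rational of bounded degree for every $z$, its coefficients are meromorphic in $z$ (quotients of determinants of holomorphic functions), and $f=G-R$ extends meromorphically across the hole. With that in hand your concluding steps are fine in outline, though two secondary slips should be repaired: extending $f$ holomorphically to $\hat H\setminus\hat P$ ``because it is connected'' is not a valid invocation of the bidisc Hartogs theorem (extension across a region bounded by a hypersurface needs a continuity-principle or denominator-clearing argument, e.g.\ solving Cousin II on the polydisc as the paper does in the proof of Lemma \ref{lem:extensionvoisinage}), and you must separately handle vertical slices contained in the polar set and bound pole orders component by component along $\hat P$, discarding components of $\hat P$ disjoint from $H$.
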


We recall that a Hartogs figure $H\subset X$ is an open set biholomorphic to 
\begin{equation}\label{eq:Hartogs}
\{(z_1,z_2)\in\D(R_1)\times\D(R_2) \mid \abs{z_1}>R_1-r_1 \text{ or } \abs{z_2}<r_2)\}\subset\C^2 \, ,
\end{equation}
with $0<r_k<R_k$ for $k=1,2$. Its holomorphic envelope is $\hat H\simeq\D(R_1)\times\D(R_2)$.

For $t\in\R$, we define
\[K_t:=\{\rho\leq t\} \, ,\, V_t:=V\setminus K_t\, ,\]
and
\[E=\{t\in\R \mid \text{there exists a foliation } \cal G_t \text{ on } V_t \text{ such that } \cal G_t|_{U\cap V_t}=\cal G\}.\]
We remark that if $t\in E$, then the extension $\cal G_t$ of $\cal G|_{U\cap V_t}$ to $V_t$ is unique. This is because the exhaustion function $\rho$ doesn't have a local maximum (because it is plusisubharmonic) and hence each connected component of $V_t$ intersects $U$. In particular, for all $t,t'\in E$, with $t<t'$, we have $(\cal G_t)|_{V_{t'}}=\cal G_{t'}$. Hence, if $t_*$ is the infimum of $E$, there exists a singular holomorphic foliation $\cal G_{t_*}$ on $V_{t_*}$ that extends $\cal G_t$ for all $t\in E$. We have to show that $t_*=-\infty$.

\subsection{Passing through non critical levels}\label{sec:passageniveauNC}

In this paragraph we show that $t_*$ can't be a non critical value of $\rho$. This follows from the following classical result.

\begin{lem}\label{lem:passageniveaunoncritique}
If $t$ is a non critical value of $\rho$ and if $\cal G_t$ is a singular holomorphic foliation defined on $V_t$, then there exists $\epsilon>0$ and a foliation $\cal G_{t-\epsilon}$ defined on $V_{t-\epsilon}$ extending $\cal G_t$.
\end{lem}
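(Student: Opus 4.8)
The plan is to show that if $t$ is a non-critical value of $\rho$, then the foliation extends across the level $\rho^{-1}(t)$, i.e. the set $E$ is open to the left at non-critical values. The key geometric fact is that at a non-critical level $\{\rho = t\}$, the hypersurface is smooth and strictly pseudoconvex (since $\rho$ is strictly plurisubharmonic outside $A$). This is precisely the situation where Hartogs extension applies: near any point $p$ of the level set, the strict pseudoconvexity lets us place a Hartogs figure $H$ whose holomorphic envelope $\hat H$ reaches just below the level $t$, with $H$ itself contained in $V_t$. I would make this precise by choosing local holomorphic coordinates in which $\rho$ looks like a small perturbation of $\abs{z_1}^2 + \abs{z_2}^2$ (Morse lemma for strictly plurisubharmonic functions in the complex sense), so that the sublevel sets are locally convex and a standard Hartogs figure sits correctly with respect to the level.

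**Extending the foliation locally via Levi's theorem.** On the Hartogs figure $H \subset V_t$, the foliation $\cal G_t$ is defined, hence so is the meromorphic function giving the slope of its leaves in the chosen coordinates (the ratio of the components of a defining $1$-form, as in Proposition~\ref{prop:deffeuilletageformediff}). By Levi's Extension Theorem~\ref{thm:levi}, this meromorphic slope function extends to $\hat H \simeq \D(R_1) \times \D(R_2)$. The extended slope function defines a meromorphic $1$-form on $\hat H$, hence a singular holomorphic foliation there, which agrees with $\cal G_t$ on $H$. Covering the compact level set $\rho^{-1}(t)$ by finitely many such Hartogs figures, and choosing $\epsilon > 0$ small enough that $V_{t-\epsilon}$ is contained in the union $V_t \cup \bigcup_k \hat H_k$, I obtain a local extension of the foliation near each point of the level.

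**Gluing the local extensions.** The remaining issue — and the one I expect to be the main obstacle — is that the extensions obtained in different overlapping Hartogs envelopes $\hat H_j$ and $\hat H_k$ must agree, so that they glue into a single foliation $\cal G_{t-\epsilon}$ on all of $V_{t-\epsilon}$. On the overlap region that already lies in $V_t$, both extensions restrict to $\cal G_t$, so they coincide there. The point is that two meromorphic foliations defined on a connected open set and agreeing on a nonempty open subset must agree everywhere, by the identity principle for meromorphic functions. I would verify that the overlaps $\hat H_j \cap \hat H_k$ are connected (shrinking the cover if necessary) and meet $V_t$, so that agreement on the part inside $V_t$ propagates to the whole overlap. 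This uniqueness of meromorphic extension is what makes the gluing automatic, and it is exactly the delicate bookkeeping that the paper flags as the nontrivial content of passing through a level. Once the pieces glue, they combine with $\cal G_t$ to give $\cal G_{t-\epsilon}$ on $V_{t-\epsilon}$ with $\cal G_{t-\epsilon}|_{U \cap V_{t-\epsilon}} = \cal G$, showing $t - \epsilon \in E$ and completing the proof.
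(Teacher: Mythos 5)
Your local extension step (placing a Hartogs figure below the level via strict pseudoconvexity, extending the meromorphic slope of the leaves by Levi's Theorem~\ref{thm:levi}) is exactly the paper's argument (Proposition~\ref{prop:boitesdehartogs} and Lemma~\ref{lem:extensionvoisinage}). But the gluing step, which you yourself identify as the main obstacle, is where you have a genuine gap: you propose to apply the identity principle on the overlaps $\hat H_j\cap\hat H_k$ after ``verifying that they are connected (shrinking the cover if necessary) and meet $V_t$.'' This verification is not available. The envelopes $\hat H_j$ are biholomorphic images of polydiscs in \emph{different} coordinate charts, so their pairwise intersections need not be connected, and a connected component of $\hat H_j\cap\hat H_k$ may lie entirely in the sublevel set $\{\rho\leq t\}$; on such a component the two extended foliations are not forced to agree, since the identity principle only propagates agreement from components that touch $V_t$. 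Shrinking does not obviously help, because each envelope must still protrude below the level (that is the whole point of the construction), so the bad configuration can persist. A second, related omission: you need the extension on $\hat H$ to coincide with $\cal G_t$ on \emph{all} of $\hat H\cap V_t$, not merely on $H$; this already requires the connectedness of $\hat H\cap V_t$, which must be engineered into the placement of the Hartogs figure (this is exactly property (3) of Proposition~\ref{prop:boitesdehartogs}, proved there by a slicing argument using convexity of the sublevel set).

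The paper resolves the gluing differently, and this is the real content of the proof. Rather than trying to control the geometry of the overlaps of the envelopes, it applies your identity-principle argument only on sets where connectedness and intersection with $V_t$ are automatic: small metric balls $W'_p=B_p(r_p/3)$ centered at points $p$ of the level set, with radius one third of the distance $r_p$ from $p$ to the boundary of the intersection of all envelopes containing $p$ (Lemma~\ref{lem:extennsionuniquessensintesection}). The factor $1/3$ then yields, by a triangle-inequality argument, the set identity
\[
W_j'\cap W_k'=\bigcup_{q\in W_j\cap W_k\cap\rho^{-1}(t)}W_q',
\]
i.e. every point of an overlap of the shrunken sets lies in a ball centered at a level-set point on which \emph{both} extensions are already known to agree; this is what makes the transition functions well defined and the glued foliation exist on $W'=\bigcup_p W'_p$ (Lemma~\ref{lem:extensionVepsilon}). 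Finally, compactness of $\rho^{-1}(t)$ gives $\rho^{-1}(]t-\epsilon,t+\epsilon[)\subset W'$ (Lemma~\ref{lem:voisinage}), hence the extension to $V_{t-\epsilon}$. So your outline needs to be repaired by replacing the gluing-on-envelope-overlaps step with an argument of this kind; as written, it assumes precisely the connectedness statements that fail in general.
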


In order to prove this, we will apply the extension theorems of Hartogs and Levi, that respectively extend holomorphic and meromorphic functions defined on a Hartogs figure to its convex envelope.

We will need the following result, which gives the normal form of strictly plurisubharmonic functions on a neighbourhood of a non critical point. For a proof, see the book of Henkin and Leiterer \cite[Theorem 1.4.14]{Henkin-Leiterer}.

\begin{thm}\label{thm:formenormalenoncritique}
Let $\rho$ be a strictly plurisubharmonic function of class $\cal C^2$ defined on a neighbourhood of 0 in $\C^2$. If $\d\rho(0)\neq0$, then there exists a biholomorphism $h:U\to V$, where $U$ and $V$ are neighbourhoods of 0 in $\C^2$, such that the function $\rho\circ h^{-1}$ is strictly convex (in the real sense) on $V$.
\end{thm}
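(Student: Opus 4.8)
The plan is to reduce the statement to the biholomorphic invariance of the Levi form by absorbing the purely holomorphic part of the second-order jet of $\rho$ into a holomorphic change of coordinates. First I would Taylor-expand $\rho$ to second order at $0$. Written in complex variables, the terms split into a pluriharmonic part $\Re P(z)$, where
\[
P(z) = 2\sum_i \frac{\partial \rho}{\partial z_i}(0)\,z_i + \sum_{i,j}\frac{\partial^2\rho}{\partial z_i\partial z_j}(0)\,z_iz_j
\]
is a holomorphic polynomial of degree $\le 2$, and the Levi form $L(z,\bar z)=\sum_{i,j}\frac{\partial^2\rho}{\partial z_i\partial\bar z_j}(0)\,z_i\bar z_j$, which is positive definite because $\rho$ is strictly plurisubharmonic. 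The real Hessian of $\rho$ at $0$, evaluated on a real tangent vector $\zeta\in\C^2$, is $2\Re\big(\sum_{i,j}\frac{\partial^2\rho}{\partial z_i\partial z_j}(0)\,\zeta_i\zeta_j\big) + 2L(\zeta,\bar\zeta)$; so strict plurisubharmonicity controls only the Hermitian summand, and the obstruction to convexity is exactly the holomorphic Hessian $\partial^2\rho/\partial z_i\partial z_j(0)$, i.e. the quadratic part of $P$.

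The key step is to kill this holomorphic Hessian by a biholomorphic change of variables, and this is precisely where the non-critical hypothesis enters. Since $\d\rho(0)\neq 0$ we have $\partial\rho(0)\neq 0$, hence the linear part of $P$ is nonzero; after a preliminary linear change of coordinates I may assume $\partial P/\partial z_1(0)\neq 0$. I then set $h(z):=(P(z),z_2)$, whose Jacobian at $0$ is triangular with nonzero diagonal, so $h$ is a biholomorphism of a neighbourhood $U$ of $0$ onto a neighbourhood $V$. In the new coordinates $w=h(z)$, the function $\tilde\rho:=\rho\circ h^{-1}$ satisfies $w_1 = P(z)$ exactly, so its second-order jet reads $\rho(0) + \Re(w_1) + \tilde L(w,\bar w) + o(\abs{w}^2)$, where $\tilde L$ is the Levi form of $\tilde\rho$ at $0$ and is still positive definite by the biholomorphic invariance of plurisubharmonicity. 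Since both $\Re(w_1)$ and the Hermitian form $\tilde L$ have vanishing $\partial_{w_i}\partial_{w_j}$ at $0$, the entire holomorphic Hessian of $\tilde\rho$ vanishes at the origin.

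Consequently the real Hessian of $\tilde\rho$ at $0$ equals $2\tilde L$, which is positive definite. Because $\rho$ is $\cal C^2$ and $h$ is polynomial, $\tilde\rho$ is $\cal C^2$ and its second derivatives are continuous; positive-definiteness of the real Hessian is an open condition (the least eigenvalue varies continuously), so it persists on a neighbourhood of $0$ in $V$. Thus $\rho\circ h^{-1}=\tilde\rho$ is strictly convex there, which is the assertion. I expect the only genuine subtlety to be the bookkeeping in the coordinate change: one must check that putting all of $\Re P$ into the single real-linear term $\Re(w_1)$ leaves a still-positive Hermitian remainder, which follows from the invariance of the Levi form under biholomorphisms rather than from any new estimate.
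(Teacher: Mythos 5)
Your proof is correct, and it is essentially the canonical argument: the paper itself gives no proof of this statement but defers to Henkin--Leiterer \cite[Theorem 1.4.14]{Henkin-Leiterer}, whose proof is exactly your construction --- absorb the pluriharmonic part $\Re P$ of the second-order jet into the first coordinate of a biholomorphism $h(z)=(P(z),z_2)$, which is invertible near $0$ precisely because $\d\rho(0)\neq0$, and then conclude from positivity of the Levi form plus continuity of the second derivatives. No gaps: the identification of the real Hessian as $2\Re\bigl(\sum\rho_{z_iz_j}\zeta_i\zeta_j\bigr)+2L(\zeta,\bar\zeta)$, the vanishing of the holomorphic Hessian of $\rho\circ h^{-1}$ at $0$, and the openness argument are all handled correctly.
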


This theorem allows us to place Hartogs figures near the levels of a strictly plurisubharmonic function:

\begin{prop}\label{prop:boitesdehartogs}
For every $t\in\R$ and every non critical $p\in\rho^{-1}(t)$, there exists an embedding $\iota:\hat{H}\to V$, such that
\begin{enumerate}
\item $\iota(H)\subset V_t,$
\item $\iota(\hat{H})$ contains a neighbourhood of $p$,
\item $\iota(\hat{H})\cap V_t$ is connected.
\end{enumerate}
\end{prop}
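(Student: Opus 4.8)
The plan is to reduce the construction of the Hartogs figure to the normal form furnished by Theorem~\ref{thm:formenormalenoncritique}. Since $p$ is a non critical point of $\rho$, we have $\d\rho(p)\neq 0$, so that theorem gives a biholomorphism $h$ from a neighbourhood of $p$ onto a neighbourhood $W$ of the origin in $\C^2$ such that $\rho\circ h^{-1}$ is strictly convex in the real sense, with $h(p)=0$. In these new coordinates, the level set $\{\rho=t\}$ becomes a strictly convex real hypersurface $S=\{\rho\circ h^{-1}=t\}$ through the origin, and $V_t=\{\rho>t\}$ becomes the convex side $\{\rho\circ h^{-1}>t\}$. It therefore suffices to embed a Hartogs figure $\hat H$ near the origin so that $H$ lands on the convex side, $\hat H$ contains a neighbourhood of $0$, and $\hat H\cap\{\rho\circ h^{-1}>t\}$ is connected; pulling everything back by $h^{-1}$ then produces the desired $\iota$.

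First I would straighten $S$ to first order. After a complex-linear change of coordinates we may assume the real tangent hyperplane to $S$ at $0$ is $\{\Im z_2=0\}$, and that the strict convexity makes $\{\rho\circ h^{-1}>t\}$ locally of the form $\{\Im z_2>\Psi(z_1,\Re z_2)\}$ where $\Psi$ is a smooth, strictly convex function vanishing to second order at the origin. The key geometric observation is that a complex disc $\{z_1=\text{const}\}$ that is sufficiently far out (large $|z_1|$) meets the convex side along a large portion of its boundary circle, while a small transverse disc $\{|z_1|\ \text{small},\ |z_2|<r_2\}$ stays inside the convex region once it is pushed slightly in the $+\Im z_2$ direction. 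This is exactly the configuration encoded in the definition \eqref{eq:Hartogs} of a Hartogs figure: the part $\{|z_1|>R_1-r_1\}$ should sit on the convex side, while the core polydisc region supplies the neighbourhood of $0$.

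Concretely, I would take the model Hartogs figure in coordinates $(z_1,z_2)$ with $z_2$ shifted: replace $z_2$ by $z_2-i\delta$ for a small $\delta>0$ and scale the radii $R_1,R_2,r_1,r_2$ to be small. Because $\Psi$ vanishes to second order, for radii small enough compared to $\delta$ the whole translated polydisc lies in $\{\Im z_2>\Psi\}$ except for a controlled region, and one checks the two defining pieces of $H$ both lie in the convex side: the piece $\{|z_1|>R_1-r_1\}$ because there the boundary of the bidisc is already well inside $\{\Im z_2>\Psi\}$ after the $i\delta$ shift, and the piece $\{|z_2|<r_2\}$ similarly. This gives conditions (1) and (2), the latter because the envelope $\hat H\cong\D(R_1)\times\D(R_2)$ contains a full neighbourhood of the center, which we arrange to be $0$. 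For (3), connectedness of $\iota(\hat H)\cap V_t$ follows from the convexity of the region $\{\Im z_2>\Psi\}$: the intersection of a polydisc with a convex set is connected, and this property is inherited after intersecting with the convex epigraph.

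The main obstacle I expect is making the quantitative choice of $\delta$ and the radii $R_k,r_k$ uniform and compatible: one must ensure simultaneously that $H$ lands strictly inside $V_t$ (requiring the shift $\delta$ and the bound $\Psi=O(|z|^2)$ to dominate the curvature terms), that the envelope $\hat H$ still covers a neighbourhood of $p$, and that no spurious second component of $\hat H\cap V_t$ is created. All of these are consequences of strict convexity in the normal form, so the estimates are elementary once the model is set up correctly; the delicate point is purely the bookkeeping of the scales, not any deep analytic input.
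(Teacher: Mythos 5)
Your intended strategy (the normal form of Theorem~\ref{thm:formenormalenoncritique}, a complex-linear straightening of the tangent plane, then a small model Hartogs figure shifted in the normal direction) is the same as the paper's, but you have placed $V_t$ on the wrong side of the level hypersurface, and this is not a repairable sign slip. By definition $V_t=V\setminus K_t$ with $K_t=\{\rho\le t\}$; since $\rho\circ h^{-1}$ is strictly convex, the locally convex set is the \emph{sublevel} set $K_t$, and $V_t=\{\rho>t\}$ is its complement, i.e.\ the \emph{concave} side. In your straightened coordinates $V_t$ is locally $\{\Im z_2>\psi(z_1,\Re z_2)\}$ with $\psi$ strictly \emph{concave}, $\psi\le -c(|z_1|^2+(\Re z_2)^2)$, and not $\{\Im z_2>\Psi\}$ with $\Psi$ convex. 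The inversion is fatal because Hartogs extension is only possible from the concave side: the convex hull of the ring $\{|z_1|>R_1-r_1\}\times\D(R_2)$, which is contained in $H$, is already the whole polydisc $\hat H\simeq\D(R_1)\times\D(R_2)$, so if $V_t$ were locally convex then condition (1), $H\subset V_t$, would force $\hat H\subset V_t$, making condition (2) impossible for the boundary point $p$. You can also see the collapse in your own scales: keeping the ring inside $\{\Im z_2>\Psi\}$ with $\Psi\approx c|z_1|^2$ requires $\delta-R_2>c(R_1-r_1)^2$, while $0\in\hat H+(0,i\delta)$ requires $\delta<R_2$. So the ``bookkeeping of scales'' you deferred as elementary is, in your set-up, contradictory: no choice of $\delta,R_k,r_k$ works.

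In the correct picture the strict convexity works \emph{for} you, and in fact your heuristic about discs far out in $z_1$ is the right one --- but it is a property of the concave side: the hypersurface bends away from $V_t$, leaving room of depth $\sim c(R_1-r_1)^2$ \emph{below} the tangent plane over the ring, so taking the shift strictly between $r_2$ and $R_2$ (the paper uses $\frac{r_2+R_2}{2}$) with $R_2-r_2$ much smaller than $(R_1-r_1)^2$ gives (1) and (2) simultaneously. Your argument for (3) inherits the same inversion: $\hat H\cap V_t$ is a polydisc \emph{minus} a locally convex set, not a polydisc intersected with one, and deleting a convex set can disconnect an open set, so connectedness genuinely needs proof. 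The paper supplies it by slicing along $\{z_1=\mathrm{const}\}$: each slice of $\hat H\cap V_t$ is a disc of radius $R_2$ minus a convex set disjoint from the concentric disc of radius $\frac{r_2+R_2}{2}$, hence is connected and contains a common annulus, so the union over $z_1$ is connected. After fixing the orientation of the convexity, you would still need to add this (or an equivalent) argument to obtain (3).
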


\begin{proof}
We choose coordinates $z_1,z_2$ centered at $p$ such that the function $\rho$ is a strictly convex function (in the real sense). Up to a linear change of coordinates, we may assume that the kernel of $d\rho(0,0)$ contains $\C\times\{0\}$.

We consider the Hartogs figure $H$ defined by Equation \eqref{eq:Hartogs} with $R_1,R_2$ very small and $R_2$ much smaller than $r_1$. By the strict convexity of $\rho$, the Hartogs figure $H'=H+(0,\frac{r_2+R_2}{2})$ is completely contained in $V_t$ and its convex envelope $\hat{H'}=\hat{H}+(0,\frac{r_2+R_2}{2})$ contains the origin.

To show that the intersection $\hat{H'}\cap V_t$ is connected, we see that the slices $(\{z_1\}\times\C)\cap(\hat{H'}\cap V_t)$ are, in the coordinate $z_2$, discs $\abs{z_2}<R_2$ without a convex set of $\C$ that do not intersect the disc $\abs{z_2}<\frac{r_2+R_2}{2}$ (because the level $\rho\leq t$ is convex). Such a set is connected and contains the annulus $r_2<|z_2|<\frac{r_2+R_2}{2}$. It follows that the fibered union of these sets (i.e. $\hat{H}+(0,\frac{r_2+R_2}{2})\cap V_t$) is connected.
\end{proof}

\begin{proof}[Proof of Lemma \ref{lem:passageniveaunoncritique}]
We start with the proof of the following result.

\begin{lem}\label{lem:extensionvoisinage}
Let $t\in\R$. For every regular point $p\in\rho^{-1}(t)$ there exists a neighbourhood $W_p$ of $p$ in $V$ such that $\cal G_t$ extends to $V_t\cup W_p$.
\end{lem}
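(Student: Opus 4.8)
The goal of Lemma~\ref{lem:extensionvoisinage} is purely local: near a regular point $p\in\rho^{-1}(t)$, I want to enlarge the domain on which the foliation $\cal G_t$ is defined so as to cover a full neighbourhood $W_p$ of $p$, in particular pushing across the level $\rho=t$ into the region $\{\rho\le t\}$. The key geometric input is already assembled in Proposition~\ref{prop:boitesdehartogs}: by the normal form of Theorem~\ref{thm:formenormalenoncritique}, I may choose coordinates in which $\rho$ is strictly convex near $p$, and then produce an embedded Hartogs figure $\iota:\hat H\to V$ with $\iota(H)\subset V_t$, with $\iota(\hat H)$ a neighbourhood of $p$, and with $\iota(\hat H)\cap V_t$ connected. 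This is exactly the configuration needed to apply an extension theorem.

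\medskip
\noindent\textbf{Extending the defining data across the Hartogs figure.} On the Hartogs figure $H$, which sits inside $V_t$, the foliation $\cal G_t$ is given. The plan is to encode $\cal G_t$ by a single meromorphic object on $\iota(H)$ and then extend it to $\iota(\hat H)$ via Levi's Extension Theorem~\ref{thm:levi}. Concretely, in the chosen coordinates $(z_1,z_2)$ a non-singular holomorphic foliation is cut out by a holomorphic $1$-form $\omega=a\,\d z_1+b\,\d z_2$ with isolated zeros; dividing by one of the coefficients gives the meromorphic ``slope'' function $f=a/b$ (or $b/a$), which determines the foliation on the set where it is defined. I apply Levi's theorem to $f$ on the Hartogs figure $H$ to obtain a meromorphic extension $\hat f$ on $\hat H$. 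The extended $\hat f$ defines a foliation $\hat{\cal G}$ on $\iota(\hat H)$ agreeing with $\cal G_t$ on $\iota(H)$; since $\iota(\hat H)\cap V_t$ is connected and $\hat{\cal G}$ coincides with $\cal G_t$ on the nonempty open subset $\iota(H)$, the identity principle forces $\hat{\cal G}=\cal G_t$ on all of $\iota(\hat H)\cap V_t$. Thus $\hat{\cal G}$ glues with $\cal G_t$ to a foliation on $V_t\cup\iota(\hat H)$, and taking $W_p=\iota(\hat H)$ proves the lemma.

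\medskip
\noindent\textbf{The main obstacle.} The delicate point is the compatibility, i.e.\ that the extension produced on the Hartogs figure genuinely \emph{agrees} with the pre-existing foliation $\cal G_t$ rather than merely extending it on one sheet. This is where connectedness of $\iota(\hat H)\cap V_t$ in Proposition~\ref{prop:boitesdehartogs}(3) is essential: it guarantees that the germ of $\cal G_t$ along the overlap is connected, so that the unique meromorphic continuation $\hat f$ cannot disagree with $\cal G_t$ on a second component of the overlap. A subsidiary technical care is that the foliation $\cal G_t$ may a priori be singular, so I must track that the zero set of $\omega$ (equivalently the pole/indeterminacy locus of the slope $f$) is an analytic set of codimension $\ge 2$, and that Levi's theorem applies to the meromorphic slope despite these singularities; this is standard but worth stating explicitly.

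\medskip
\noindent Once Lemma~\ref{lem:extensionvoisinage} is established, the proof of Lemma~\ref{lem:passageniveaunoncritique} is concluded by a compactness argument: the level set $\rho^{-1}(t)$ is compact (as $\rho$ is an exhaustion) and, $t$ being a non critical value, consists entirely of regular points. Covering $\rho^{-1}(t)$ by finitely many neighbourhoods $W_{p_1},\dots,W_{p_N}$ as above, each extension agrees with $\cal G_t$ on its overlap with $V_t$; by the uniqueness of the extension (the exhaustion $\rho$ has no local maxima, so every component of $V_t$ meets $U$, forcing uniqueness) the finitely many local extensions agree on all pairwise overlaps and glue to a foliation defined on an open neighbourhood of $\rho^{-1}(t)$ together with $V_t$. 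This neighbourhood contains $V_{t-\epsilon}$ for some $\epsilon>0$, which yields the desired extension $\cal G_{t-\epsilon}$ and shows $t-\epsilon\in E$, contradicting the minimality of $t_*$ at a non critical value.
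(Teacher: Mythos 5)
Your proposal follows essentially the same route as the paper's proof: place the Hartogs figure of Proposition~\ref{prop:boitesdehartogs} inside $V_t$, encode the foliation by its meromorphic slope, extend the slope by Levi's Extension Theorem~\ref{thm:levi}, and use the connectedness of $\iota(\hat H)\cap V_t$ to force agreement of the extension with $\cal G_t$ on the overlap. The only details the paper spells out that you gloss over are the gluing of the local slopes $g^2_j/g^1_j$ from the flow-box $1$-forms into a single meromorphic function on $H$, and the final clearing of denominators on the polydisc $\hat H$ to get a holomorphic $1$-form with isolated zeros; both are routine and your argument is correct.
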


\begin{proof}
Let $(U_p,\phi_p)$ be a chart of $X$ centered at $p$. The function $\tilde\rho:=\rho\circ\phi_p^{-1}$ is strictly plurisubharmonic on a neighbourhood of 0 in $\C^2$. We consider the function $\iota$ of Proposition \ref{prop:boitesdehartogs} that places a Hartogs figure $H$ in $\phi_p(V_t\cap U_p)$. Let $\cal G'$ be the restriction of ${\phi_p}_*(\cal G_t)$ to $H\subset\C^2$. We will prove that there exists a differential 1-form $\omega$ with isolated zeroes on $\hat H$ that defines $\cal G'$ on $H$.

Let $\{U_j\}_{j\in J}$ be a cover of $H$ by flow boxes and $\{\omega_j\}_{j\in J}$ be 1-forms defined on the open sets $U_j$ that give the foliation $\cal G'$ as in Proposition \ref{prop:deffeuilletageformediff}. Taking $(z,w)$ as coordinates in the chart $\phi_p$, we can write
\[\omega_j=g^1_j\d z+g^2_j\d w ,\]
where $g^1_j,g^2_j\in\cal O(U_j)$. By Proposition \ref{prop:deffeuilletageformediff} there exist non vanishing holomorphic functions $h_{jk}$ defined on $U_j\cap U_k$ such that
\[\omega_j=h_{jk}\omega_k.\]
The two last expressions imply that if $U_j\cap U_k\neq\emptyset$ then
\begin{equation}\label{eq:cambiocoordenadas}
g^1_j=h_{jk}g^1_k\quad,\quad g^2_j=h_{jk}g^2_k.
\end{equation}

If $g^l_j$ is identically zero for a $j\in J$, the last equation and the fact that $H$ is connected tell us that $g^l_j$ is zero for all $j\in J$. Since the forms $\omega_j$ are not equally zero, the function $g^l_j$ is not identically zero for at least one $l\in\{1,2\}$ and all $j\in J$. We suppose that $g^1_j$ is not identically zero. Then $\frac{g^2_j}{g^1_j}$ defines a meromorphic function $f_j$ on $U_j$. Equation \eqref{eq:cambiocoordenadas} implies that if $U_j\cap U_k\neq\emptyset$ then $f_j=f_k$ on $U_j\cap U_k$. Hence there exists a meromorphic function $f$ defined on $H$ such that $f|_{U_j}=f_j$. By Levi's Extension Theorem \ref{thm:levi}, this function $f$ extends to a meromorphic function on $\hat H$ that we call $\hat f$. We define a meromorphic 1-form on $\hat H$ by
\[\eta:=\d z+\hat f\d w.\]

Since $\hat H$ is a polydisc, there exists a function $h\in\cal O(\hat H)$ and a holomorphic differential 1-form $\omega$ defined on $\hat H$ with isolated zeroes such that $h\eta=\omega$. We see that for all $j\in J$ there exists $g_j\in\cal O^*(U_j)$ such that $\omega|_{U_j}=g_j\omega_j$. Then the foliation $\tilde{\cal G}$ defined by $\omega$ on $\hat H$ extends $\cal G'$. This extension coincides with ${\phi_p}_*(\cal G_t)$ on the set $\{\tilde\rho>0\}\cap\hat H$ since, by construction of our Hartogs figure, this intersection is connected.

This extension is unique because for any other differential 1-form $\omega'$ on $\hat H$ whose foliation extends ${\phi_p}_*(\cal G_t)$, there exists a non vanishing holomorphic function $a$ on $\phi_p(V_t\cap U_p)$ such that $\omega=a\omega'$. The open set $\hat H$ being connected, the form $\omega\wedge\omega'$ is zero on $\hat H$. Hence there exists a non vanishing holomorphic function $\tilde a$ on $\hat H$ such that $\omega=\tilde a\omega'$. To finish we pullback the foliation $\tilde{\cal G}$ on $V$ by the chart map $\phi_p$. The open set $W_p$ is then $\phi_p^{-1}(\hat H)$.
\end{proof}

By Lemma \ref{lem:extensionvoisinage}, for all $p\in\rho^{-1}(t)$, we have an open set $W_p$ of $V$ containing $p$ such that $\cal G$ extends to $W_p$. Its intersection with $\rho^{-1}(t)$ defines an open set $V'_p$ of $\rho^{-1}(t)$ containing $p$. The family $\{V'_p\}$, for $p$ in $\rho^{-1}(t)$, is then a cover of $\rho^{-1}(t)$. Since this hypersurface is compact, we can choose a finite subcover $\{V'_j\}_{j=1,\ldots,n}$, where $V'_j=V'_{p_j}$.

For $p\in\rho^{-1}(t)$, we define $J_p\subset\{1,\ldots,n\}$ the set of indexes $j$ such that $p\in W_{p_j}$. The intersection $W_{0,p}:=\bigcap_{j\in J_p}W_{j}$ is an open neighbourhood of $p$. We note $r_p:=\mathrm{dist}(p,\partial W_{0,p})$ and we define
\[W_p':=B_p(r_p/3),\qquad W':=\bigcup_{p\in\rho^{-1}(t)}W'_p.\]
We note $W_j:=W_{p_j}$ and $\cal G_j$ the extension of $\cal G$ to $W_j$ given by Lemma \ref{lem:extensionvoisinage}. Let $\omega_j$ be a holomorphic 1-form on $W_j$ that defines the foliation $\cal G_j$.

\begin{lem}\label{lem:extennsionuniquessensintesection}
For all $p\in\rho^{-1}(t)$ and all $j\in J_p$, the foliations $\cal G_j|_{W'_p}$ coincide. More precisely, there exists a holomorphic function $h$, not vanishing on $W'_p\subset W_j\cap W_k$, such that $\omega_j=h_{jk}\omega_k$ on $W_p'$.
\end{lem}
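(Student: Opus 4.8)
The plan is to exploit the uniqueness of analytic continuation together with the fact that, on a complex surface, the polar and zero sets of a meromorphic function are of pure codimension one, so neither can be supported on the isolated zeros of the defining forms.

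First I would record the inclusions: for $j,k\in J_p$ we have $W'_p\subset W_{0,p}=\bigcap_{i\in J_p}W_i\subset W_j\cap W_k$ by the very definition of $W_{0,p}$ and $W'_p$, so the holomorphic $1$-forms $\omega_j$ and $\omega_k$ (with isolated zeros, as produced by Lemma \ref{lem:extensionvoisinage}) are both defined on the ball $W'_p$. Since $\cal G_j$ and $\cal G_k$ are extensions of the \emph{same} foliation $\cal G_t$ defined on $V_t$, they coincide on $V_t\cap W'_p$. This intersection is a non-empty open set: $p$ lies on the non-critical level $\rho^{-1}(t)$, and because $\rho$ is plurisubharmonic it has no local maximum, so arbitrarily close to $p$ there are points with $\rho>t$. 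Hence $\omega_j\wedge\omega_k=0$ on $V_t\cap W'_p$. Now $\omega_j\wedge\omega_k$ is a holomorphic $2$-form on the connected ball $W'_p$ vanishing on a non-empty open subset, so by the identity theorem it vanishes identically on $W'_p$.

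Writing $\omega_j=g^1_j\,\d z+g^2_j\,\d w$ and likewise for $\omega_k$, the vanishing of the wedge reads $g^1_jg^2_k=g^2_jg^1_k$; using whichever coefficient of $\omega_k$ is not identically zero, this produces a meromorphic function $h_{jk}$ on $W'_p$ with $\omega_j=h_{jk}\,\omega_k$. The one genuinely delicate point, and the main obstacle, is to upgrade $h_{jk}$ from meromorphic to a \emph{non-vanishing holomorphic} function. For this I would observe that $h_{jk}$ is already holomorphic on $W'_p$ minus the zero set of $\omega_k$, which consists of isolated points and hence has codimension two. Since the polar set of a meromorphic function on a two-dimensional complex manifold is either empty or of pure codimension one, it cannot be contained in this codimension-two set, so $h_{jk}$ has no poles and is holomorphic on all of $W'_p$. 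Applying the same remark to $1/h_{jk}=\omega_k/\omega_j$, whose only possible poles lie on the isolated zeros of $\omega_j$, shows that $1/h_{jk}$ is holomorphic as well, so $h_{jk}\in\cal O^*(W'_p)$. This gives $\omega_j=h_{jk}\,\omega_k$ on $W'_p$ with $h_{jk}$ holomorphic and non-vanishing, which is precisely the asserted coincidence of the foliations $\cal G_j|_{W'_p}$.
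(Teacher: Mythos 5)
Your proof is correct and follows essentially the same route as the paper's: both observe that $\cal G_j$ and $\cal G_k$ agree with $\cal G_t$ on the non-empty open set $W'_p\cap V_t$, deduce that $\omega_j\wedge\omega_k$ vanishes identically on the connected set $W'_p$ by the identity theorem, and then divide one form by the other. The only difference is that you spell out the final step --- upgrading the meromorphic ratio $h_{jk}=\omega_j/\omega_k$ to a non-vanishing holomorphic function via the pure codimension-one nature of polar sets against the isolated zeros of the defining forms --- which the paper asserts without justification.
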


\begin{proof}
The foliations $\cal G_j|_{W'_p}$ and $\cal G_k|_{W'_p}$ coincide on $W'_p\cap V_t$ by construction. Hence $\omega_j\wedge\omega_k=0$ restricted to $W'_p\cap V_t$. Since $W'_p$ is connected, the product of these two forms is still zero on $W'_p$. So there exists a holomorphic function $h$, not vanishing on $W'_p\subset W_j\cap W_k$, such that $\omega_j=h_{jk}\omega_k$ on $W_p'$.
\end{proof}

\begin{lem}\label{lem:extensionVepsilon}
The foliation $\cal G_t$ extends in a unique way to $W'$.
\end{lem}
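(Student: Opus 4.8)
The plan is to glue the local extensions $\cal G_j$ into a single foliation on $W'$, using Lemma \ref{lem:extennsionuniquessensintesection} for a fixed centre $p$ and a short combinatorial argument based on the radii $r_p$ to reconcile distinct overlapping balls. First I would define a candidate foliation on $W'$ by declaring, on each ball $W'_p=B_p(r_p/3)$, the foliation to be $\cal G_j|_{W'_p}$ for an arbitrary $j\in J_p$. This is legitimate because $W'_p\subseteq B_p(r_p)\subseteq W_{0,p}\subseteq W_j$ for every $j\in J_p$, so each $\cal G_j$ is defined on $W'_p$, and it is independent of the choice of $j$ precisely by Lemma \ref{lem:extennsionuniquessensintesection}. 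The entire content of the statement is then to verify that these local definitions agree on the overlaps $W'_p\cap W'_q$.

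The key step will be to show that two overlapping balls share a common index. Assume $W'_p\cap W'_q\neq\emptyset$ and, without loss of generality, $r_p\le r_q$. Choosing $x$ in the overlap, the triangle inequality yields
\[
\mathrm{dist}(p,q)\le\mathrm{dist}(p,x)+\mathrm{dist}(x,q)<\frac{r_p}{3}+\frac{r_q}{3}\le\frac{2r_q}{3}<r_q.
\]
Since $B_q(r_q)\subseteq W_{0,q}=\bigcap_{k\in J_q}W_k$, the estimate $\mathrm{dist}(p,q)<r_q$ forces $p\in W_k$ for every $k\in J_q$, i.e. $J_q\subseteq J_p$. As $q\in\rho^{-1}(t)$ is covered by the finite subcover, $J_q\neq\emptyset$, so one may fix a common index $k\in J_q\subseteq J_p$. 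Then $\cal G_k$ is a single foliation defined on $W_k$, which contains $W'_p\cup W'_q$; by Lemma \ref{lem:extennsionuniquessensintesection} the foliation assigned to $W'_p$ equals $\cal G_k|_{W'_p}$ and the one assigned to $W'_q$ equals $\cal G_k|_{W'_q}$. Being restrictions of one and the same $\cal G_k$, they coincide on $W'_p\cap W'_q$. Hence the local definitions patch into a well-defined foliation $\cal G'$ on $W'$.

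Finally I would check that $\cal G'$ extends $\cal G_t$ and is unique. Each $\cal G_j$ restricts to $\cal G_t$ on $V_t$, so $\cal G'|_{W'\cap V_t}=\cal G_t$ at once. For uniqueness, let $\cal G''$ be any foliation on $W'$ extending $\cal G_t$. On each $W'_p$ both $\cal G'$ and $\cal G''$ restrict to $\cal G_t$ on the nonempty open set $W'_p\cap V_t$, which is nonempty because $p$ is a non-critical point of $\rho$ and so $W'_p$ meets $\{\rho>t\}$. Representing the two foliations by holomorphic $1$-forms $\omega',\omega''$ on $W'_p$, the wedge $\omega'\wedge\omega''$ is holomorphic and vanishes on $W'_p\cap V_t$, hence vanishes on the connected set $W'_p$ by the identity principle, exactly as in Lemmas \ref{lem:extensionvoisinage} and \ref{lem:extennsionuniquessensintesection}; therefore $\cal G'=\cal G''$ on each $W'_p$, and so on all of $W'$. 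The only genuinely delicate point is the combinatorial claim that overlapping balls share an index: this is where the factor $1/3$ in the definition of $W'_p$ is used, as it makes the triangle-inequality estimate strict and thus forces $J_q\subseteq J_p$.
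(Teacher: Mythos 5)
Your proof is correct, and its existence step is organized differently from the paper's. The paper does not glue the local foliations ball by ball: it covers $W'$ by the finitely many sets $W'_j:=\bigcup_{q\in W_j\cap\rho^{-1}(t)}W'_q$ and glues the defining $1$-forms, which requires exhibiting non-vanishing holomorphic transition functions $h_{jk}$ with $\omega_j=h_{jk}\omega_k$ on $W'_j\cap W'_k$; the combinatorial heart there is the cover identity $W'_j\cap W'_k=\bigcup_{q\in W_j\cap W_k\cap\rho^{-1}(t)}W'_q$, proved by the same $1/3$-radius triangle-inequality trick you use (the paper only deduces that, of two centres whose small balls overlap, at least one lies in $W_j\cap W_k$), after which $h_{jk}$ is defined pointwise from the local functions of Lemma \ref{lem:extennsionuniquessensintesection} and checked to be well defined. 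Your shared-index lemma --- if $W'_p\cap W'_q\neq\emptyset$ and $r_p\le r_q$ then $J_q\subseteq J_p$ --- is a cleaner and strictly stronger statement, and it lets you glue the foliations themselves over the fine cover $\{W'_p\}$ with no cocycle to construct; that is a genuine simplification. What the paper's route buys in exchange is explicit data in the format of Proposition \ref{prop:deffeuilletageformediff}: defining forms and a multiplicative cocycle over a finite cover. Two points in your write-up deserve a word of care, both at the same level of informality as the paper itself: the inclusions $W'_p\subseteq W_{0,p}$ and $B_q(r_q)\subseteq W_{0,q}$ use that the ambient distance is a length (Riemannian) metric, so that a point at distance less than $\mathrm{dist}(q,\partial W_{0,q})$ from $q$ is joined to $q$ by a path that cannot cross $\partial W_{0,q}$; and in the uniqueness step an arbitrary extension of $\cal G_t$ need not admit a single defining $1$-form on all of $W'_p$, so the wedge/identity-principle argument should be run with local defining forms together with an open--closed argument on the connected ball $W'_p$.
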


\begin{proof}
First we show the uniqueness. Let $\cal G_1$ and $\cal G_2$ be two extensions of $\cal G_t$ to $W'$. Since by definition the sets $W'_p$ cover $W'$, it is sufficient to show that $\cal G_1|_{W'_p}=\cal G_2|_{W'_p}$ for all $p\in\rho^{-1}(t)$. The arguments used in the proof of Lemma \ref{lem:extennsionuniquessensintesection} give us the result.

Now we show the existence. We cover $W'$ with the open sets
\[W'_j:=\bigcup_{q\in W_j\cap\rho^{-1}(t)}W_q'\subset W_j.\]
Since $\cal G_t$ extends to $W_j$ then $\cal G_t$ extends to $W'_j$. It is sufficient to show then, that for all $j\neq k\in\{1,\ldots,n\}$ we have $\omega_j=h_{jk}\omega_k$ on $W'_j\cap W'_k$ with $h_{jk}$ a non vanishing holomorphic function.

To do this, we verify that
\[W'_j\cap W'_k=\bigcup_{q\in W_j\cap W_k\cap \rho^{-1}(t)}W_q'.\]
The inclusion $\supset$ come from the definition of $W_j'$. In the other sense, if $x\in W_j'\cap W_k'$, then there exists $q_j,q_k\in\rho^{-1}(t)$ such that $q_j\in W_j$, $q_k\in W_k$ and $x\in W_{q_j}'\cap W_{q_k}'$. To obtain the conclusion, it suffices that $q_j$ or $q_k$ be in $W_j\cap W_k$. If this is not the case, then $d(q_j,q_k)\geq d(q_j,\partial W_j)\geq r_{q_j}$ and also $d(q_j,q_k)\geq d(q_k,\partial W_k)\geq r_{q_k}$. Since by definition $W_{p}' = B_p(r_p/3)$, we would obtain that $W_{q_j}'$ and $W_{q_k}'$ are disjoints, which is a contradiction.

Let $q_0\in W_j'\cap W_k'$. Then there exists $q_1\in W_j\cap W_k\cap \rho^{-1}(t)$ such that $q_0\in W_{q_1}'\subset W_j\cap W_k$. By Lemma \ref{lem:extennsionuniquessensintesection} there exists a holomorphic function $h^1_{jk}$, not vanishing on $W_{q_1}'$, such that $\omega_j=h^1_{jk}\omega_k$ on $W_{q_1}'$. Now, if $q_2\in W_j\cap W_k\cap\rho^{-1}(t)$ and $q_0\in W_{q_2}'$, we also have $\omega_j=h^2_{jk}\omega_k$ on $W_{q_2}'$. We can deduce then
\[h^1_{jk}(q_0)=h^2_{jk}(q_0).\]
For all $q_0\in W_j'\cap W_k'$ we can then define $h_{jk}(q_0):=h^1_{jk}(q_0)$, and this does not depend on the choice of $q_1$. The function $h_{jk}$ is a non vanishing holomorphic function because it satisfies these properties everywhere locally. Since $\omega_j=h_{jk}\omega_k$ on $W_j'\cap W_k'$, this ends the proof.
\end{proof}

\begin{lem}\label{lem:voisinage}
$W'$ is an open neighbourhood of $\rho^{-1}(t)$ in $X$. In particular, there exists $\epsilon>0$ such that $\rho^{-1}(]t-\epsilon,t+\epsilon[)\subset W'$.
\end{lem}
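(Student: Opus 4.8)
The plan is to prove the two assertions separately, the first being purely formal and the second a standard tube argument resting on the compactness of the level set $\rho^{-1}(t)$.

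First I would observe that $W'$ is \emph{open}: by construction $W'=\bigcup_{p\in\rho^{-1}(t)}W'_p$ with $W'_p=B_p(r_p/3)$, and each $W'_p$ is an open ball of strictly positive radius, since $r_p=\mathrm{dist}(p,\partial W_{0,p})>0$ because $W_{0,p}$ is an open neighbourhood of $p$. Moreover $W'$ \emph{contains} $\rho^{-1}(t)$: for every $p\in\rho^{-1}(t)$ one has $p\in W'_p\subset W'$. This already settles the first assertion.

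For the ``in particular'' part, the key point is that $\rho^{-1}(t)$ is compact: since $\rho$ is an exhaustion function, its sublevel sets $K_c=\{\rho\le c\}$ are compact, so the level $\rho^{-1}(t)=K_t\cap\{\rho\ge t\}$ is closed inside the compact set $K_t$, hence compact. I would then argue by contradiction. If no $\epsilon>0$ worked, then for each $n\in\N$ there would exist a point $x_n\in\rho^{-1}(]t-\tfrac1n,t+\tfrac1n[)$ with $x_n\notin W'$. For $n\ge1$ all these points lie in $\rho^{-1}([t-1,t+1])=\{\rho\le t+1\}\cap\{\rho\ge t-1\}$, which is compact, being closed inside the compact sublevel set $\{\rho\le t+1\}$. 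Extracting a convergent subsequence $x_{n_k}\to x_*$ and using $\abs{\rho(x_{n_k})-t}<\tfrac1{n_k}\to0$ together with the continuity of $\rho$, I would get $\rho(x_*)=t$, i.e. $x_*\in\rho^{-1}(t)\subset W'$. Since $W'$ is open, $x_{n_k}\in W'$ for $k$ large, contradicting $x_{n_k}\notin W'$. Hence such an $\epsilon$ exists and $\rho^{-1}(]t-\epsilon,t+\epsilon[)\subset W'$.

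The argument has essentially no serious obstacle; it is the elementary tube lemma for the compact level $\rho^{-1}(t)$ sitting inside the open set $W'$. The only points deserving care are (i) checking that each ball $W'_p$ has positive radius, so that $W'$ is genuinely a neighbourhood of $\rho^{-1}(t)$ and not merely a set containing it, and (ii) ensuring that the sequence of bad points stays in a fixed compact set, which is guaranteed by the exhaustion property of $\rho$; note in particular that for finite $t$ the compact slab $\rho^{-1}([t-1,t+1])$ stays away from the exceptional set $A=\{\rho=-\infty\}$, so no difficulty arises there.
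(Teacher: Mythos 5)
Your proof is correct, and it reaches the conclusion by a route that differs in its details from the paper's. The paper's proof introduces, for $q\in\rho^{-1}(t)$, the quantity $d_{\partial W'}(q):=\inf_{q'\in\partial W'}\abs{\rho(q)-\rho(q')}$, i.e.\ the distance from $q$ to $\partial W'$ measured through the values of $\rho$; it then takes the minimum of this continuous function over the compact level set $\rho^{-1}(t)$, sets $\epsilon$ equal to that minimum, and justifies its positivity by appealing to the discreteness of the critical points of $\rho$ in $V$. You instead run the elementary tube lemma: you first check that $W'$ is open (positivity of the radii $r_p/3$) and contains the level set, then use the exhaustion property of $\rho$ to confine any hypothetical sequence of bad points to the compact slab $\rho^{-1}([t-1,t+1])$, and conclude by sequential compactness and continuity of $\rho$. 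The underlying mechanism, compactness of $\rho^{-1}(t)$, is the same in both proofs, but your implementation is more self-contained: it nowhere invokes critical points of $\rho$, whereas in the paper that invocation is the (rather oblique) justification that the minimum is nonzero; what really makes both arguments work is exactly what you isolate, namely that $W'$ is an open set containing the compact level $\rho^{-1}(t)$. Your extra observation that the slab stays away from the exceptional set $A=\{\rho=-\infty\}$ is a sensible precaution, though not strictly needed once compactness of the slab is established.
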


\begin{proof}
For all $q\in\rho^{-1}(t)$ we define
\[d_{\partial W'}(q):=\inf_{q'\in\partial W'}\abs{\rho(q)-\rho(q')}.\]
It is the distance between $q$ and the border of $W'$ measured with the function $\rho$. This function is continuous and defined on the compact $\rho^{-1}(t)$, so it reaches its minimum at a point $q_0\in\rho^{-1}(t)$. This minimum is not zero because the set of critical points of $\rho$ is discrete in $V$. Finally, $W'$ contains $\rho^{-1}(]t-\epsilon,t+\epsilon[)$, where $\epsilon=d_{\partial W'}(q_0)$.
\end{proof}

Lemmas \ref{lem:extensionVepsilon} and \ref{lem:voisinage} conclude the proof of Lemma \ref{lem:passageniveaunoncritique}. 
\end{proof}

\subsection{Passing through critical levels}\label{sec:passageniveauC}

In this paragraph we show that $t_*$ cannot be a critical value of the restriction of $\rho$ to $V\setminus A$. This follows essentially from the fact that the indexes of the function $\rho$ in its critical points can take only the values $0, 1$ or $2$. Remember that the index of a critical point is the maximal dimension of a subspace of the tangent space at this point such that the Hessian of the function is negative. For a strictly plurisubharmonic function on a complex surface such a space cannot have dimension $\geq 3$, otherwise it would contain a complex line on which the Levi form of the function would be negative.

Formally, the passage of critical values will be possible thanks to the following perturbation result.

\begin{lem}\label{lem:perturbation}
Let $p_*$ be a critical point of the exhaustion function $\rho|_{V\setminus A}$ and $I_*$ be a neighbourhood of $t_*=\rho(p_*)$ such that $\rho^{-1}(I_*)$ does not contain any other critical points. Then there exists a continuous function $\rho':V\to[-\infty,+\infty)$ such that
\begin{enumerate}
\item $\rho'$ coincides with $\rho$ on $V\setminus\rho^{-1}(I_*)$.
\item $\rho'$ is smooth and strictly plurisubharmonic on $V\setminus A$.
\item $\rho'$ admits a unique critical point $p'_*$ in $\rho^{-1}(I_*)$ and $\rho'(p_*)>\rho'(p'_*)$.
\item The critical levels $\rho^{-1}(]t_*,+\infty))$ and $\rho'^{-1}(]t'_*, + \infty))$ have connected intersection, where $t'_*=\rho(p'_*)$.
\end{enumerate}
\end{lem}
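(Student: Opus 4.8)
The plan is to produce $\rho'$ as a $C^2$-small, compactly supported modification of $\rho$ localized near $p_*$, and then to verify the four requirements separately, keeping the soft analytic properties (strict plurisubharmonicity and the control of critical points) apart from the genuinely topological one (connectedness of the intersection of super-level sets), which is the only place where the bound on the index is used. Note that Theorem \ref{thm:formenormalenoncritique} is unavailable here, since $p_*$ is exactly a critical point; the analysis must be done by hand.

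First I would fix local holomorphic coordinates $(z,w)$ centered at $p_*$ together with a ball $B=B(p_*,\delta)\subset\rho^{-1}(I_*)$, so that any perturbation supported in $B$ automatically yields property (1), namely $\rho'=\rho$ on $V\setminus\rho^{-1}(I_*)$. I would then set
\[\rho':=\rho-\lambda\,\chi\bigl(1-\Re(az+bw)\bigr),\]
where $\chi\ge 0$ is a smooth bump equal to $1$ near $p_*$ and supported in $B$, the term $\Re(az+bw)$ is a small pluriharmonic tilt whose role is to displace the critical point, and $\lambda>0$ is a small parameter. Property (2), that $\rho'$ is still strictly plurisubharmonic on $V\setminus A$, is then a purely soft matter: the Levi form of $\rho$ is bounded below by a positive constant on the compact set $\overline B$, the pluriharmonic piece contributes nothing to the Levi form while the bump contributes a term of size $O(\lambda)$, so for $\lambda$ small the Levi form of $\rho'$ stays positive; outside $B$ one has $\rho'=\rho$. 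This uses only that strict plurisubharmonicity is an open condition in the $C^2$ topology.

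Property (3) is local and Morse-theoretic. Since $p_*$ is the unique critical point of $\rho$ in $\rho^{-1}(I_*)$ and is non-degenerate, $\nabla\rho$ is bounded below in norm off any fixed neighbourhood of $p_*$ in $\overline B$; choosing $\lambda$ small compared with this lower bound excludes critical points of $\rho'$ in the transition region, while the implicit function theorem applied to $\nabla\rho'=0$ furnishes a single non-degenerate critical point $p'_*$ near $p_*$, depending smoothly on $\lambda$. Expanding $\rho'(p'_*)$ to second order in $\lambda$, and using that the real Hessian $H$ of $\rho$ at $p_*$ is non-degenerate with a positive direction (the index is at most $2$, in fact it suffices that $H$ is not negative definite, which is excluded since $\rho$ is plurisubharmonic), one sees that the tilt can be oriented so that the quantity controlling the new value is negative, giving $\rho'(p'_*)<\rho'(p_*)$, that is property (3) with $t'_*=\rho'(p'_*)$. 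Without the tilt the critical point would not move and (3) would fail, so this displacement is essential: it is what lets the later argument genuinely pass the critical point.

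The main obstacle is property (4), the connectedness of $\rho^{-1}(]t_*,+\infty))\cap\rho'^{-1}(]t'_*,+\infty))$, and this is exactly where the hypothesis that the index of $p_*$ is at most $2$ enters. Outside $B$ the two functions coincide and, as $t'_*<t_*$, the intersection reduces there to $\{\rho>t_*\}$, so the whole question is local near $p_*$ and concerns how the local picture attaches to $\{\rho>t_*\}$. I would analyze this through the second-order model $\rho=t_*+Q$ at $p_*$, where $Q$ has at most two negative directions: writing $Q=-\abs{x^-}^2+\abs{x^+}^2$, the positive locus $\{Q>0\}=\{\abs{x^+}>\abs{x^-}\}$ deformation retracts onto the unit sphere of the positive-definite subspace, which is connected exactly when that subspace has real dimension at least two, i.e. when the index is at most $2$ (for index $3$ the sphere is two points and the region disconnects). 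The delicate point — which I expect to be the real content of the lemma rather than any single estimate — is to run this connectivity count while staying compatible with the holomorphic coordinates used for (2) and (3), and to check that the region cut out by $\rho'$ glues onto $\{\rho>t_*\}$ along an open connected set because the perturbation only lowered values near $p_*$.
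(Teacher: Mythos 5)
Your handling of properties (1)--(3) is essentially sound and parallels the paper's proof in spirit: the paper also makes a $\cal C^2$-small perturbation supported near $p_*$ that displaces the critical point along a positive direction of the Hessian, deduces (2) from the openness of strict plurisubharmonicity in the $\cal C^2$ topology, and gets (3) by direct evaluation. (The paper's device is different in detail: it uses the real Morse normal form $\rho=x_1^2+x_2^2\pm y_1^2\pm y_2^2$, which exists with at least two plus signs precisely because the index is $\le 2$, and replaces $x_1$ by $x_1-\delta(y_1,y_2)$ with suitable cutoffs, instead of your constant drop plus pluriharmonic tilt; either works for (1)--(3).)

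The genuine gap is property (4), and it is not a side issue: it is the whole point of the lemma, since it is what allows the two partial extensions of the foliation to be glued. What you actually prove --- that $\{Q>0\}$ retracts onto the unit sphere of the positive subspace, which is connected when the index is $\le 2$ --- concerns a \emph{single} superlevel set of the \emph{quadratic model}. Property (4) concerns the intersection of superlevel sets of \emph{two different functions}, each taken just above its own critical value, and it is a statement in all of $V$, not near $p_*$ only. Near the critical levels the sets are defined by the functions being barely positive, so the cubic remainders of the Taylor model cannot be absorbed and the second-order picture alone is inconclusive; worse, your tilt is tied to holomorphic coordinates, which are in general incompatible with the Morse coordinates in which the connectivity count is transparent (the paper explicitly notes that the Morse coordinates have no reason to be holomorphic). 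The paper's proof is organized exactly to close this gap: (i) it works in exact Morse coordinates, so $\rho$ \emph{is} its quadratic model; (ii) the perturbation is the translation $x_1\mapsto x_1-\delta(y_1,y_2)$, cut off on a bidisc $\D_{\epsilon_*}\times\D_\epsilon$ adapted to the positive/negative splitting with $\epsilon$ much smaller than $\epsilon_*^2$; (iii) connectedness of $\{\rho>0\}\cap\{\rho'>0\}\cap W_*$ is proved by slicing along $(y_1,y_2)=(c_1,c_2)$ --- in each slice both sets are complements of small discs inside the disc $x_1^2+x_2^2<\epsilon_*^2/3$, hence their intersection is connected and contains a common annulus --- and (iv) the global statement follows by retracting $\{\rho>0\}$ onto $\{\rho>0\}\setminus W_*$ and writing $\{\rho>0\}\cap\{\rho'>0\}$ as the union of $\{\rho>0\}\setminus W_*$ with the connected local piece. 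An argument of this type (or a substitute adapted to your specific perturbation) must be supplied; as written, the key assertion of the lemma is flagged as ``delicate'' but in fact assumed.
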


Before proving this lemma, let us explain why it gives us a contradiction if $t_*$ is supposed to be $>-\infty$ and a critical value of $\rho$. In fact, applying the techniques of Section \ref{sec:passageniveauNC} to the function $\rho'$, we would be able to extend the foliation $\cal G$ to a singular holomorphic foliation $(\cal G)'_{t'_*}$ defined on the critical level $\rho'>t'_*$ (indeed, if we note $t_*^+$ the border of $I_*$ bigger than $t_*$, the foliation $\cal G_{t_*}$ is defined on $(\rho')^{-1}(t_*^+)$ and therefore we could extend it to the next critical level of $\rho'$, i.e. the level $(\rho')^{-1}(t_*')$).

Since the intersection of the levels $\rho^{-1}(]t_*,+\infty))$ and $\rho'^{-1}(]t'_*,+\infty))$ is connected, the foliations $\cal G_{t_*}$ and $(\cal G)'_{t'_*}$ coincide on the intersection of their definition domains. We can then extend the foliation $\cal G$ to a foliation defined on $\rho^{-1}(]t_*,+\infty))\cup\rho'^{-1}(]t'_*,+\infty))$ that contains a neighbourhood of $p_*$. Applying the techniques of Section \ref{sec:passageniveauNC}, this allows us to show that $\cal G$ extends to a foliation on a level of type $\rho^{-1}( ]t_*-\epsilon,+\infty))$ where $\epsilon>0$. This contradicts the minimality of $t_*$.

\begin{proof}[Proof of Lemma \ref{lem:perturbation}]
Since the index of $\rho$ in $p_*$ is $\leq 2$, the Morse Lemma gives us coordinates $(x_1,x_2,y_1,y_2)$ centered at $p_0$ such that \footnote{These real coordinates have no reason to be holomorphic!}
\begin{equation}\label{eq:Morse}
\rho=x_1^2+x_2^2\pm y_1^2\pm y_2^2.
\end{equation}
We consider a neighbourhood $W_*$ of $p_*$ and a number $\epsilon_*>0$ such that the coordinates $(x_1,x_2,y_1,y_2)$ take $W_*$ to the bidisc
\[\D_{\epsilon_*}\times\D_\epsilon:=\{x_1^2+x_2^2\leq\epsilon_*^2\text{ and }y_1^2+y_2^2\leq\epsilon^2\}.\]
We will choose $\epsilon_*>\epsilon>0$ small enough for $W_*$ to be contained in $\rho^{-1}(I_*)$, and we will suppose in what follows that $\epsilon$ is very small with respect to $\epsilon_*^2$.

To simplify notations, we can suppose that $t_*=0$ (we can consider the function $\rho-t_*$).
We introduce
\begin{itemize}
\item a smooth function $\delta:\D_{\epsilon_*}\to\R_+$ such that $\delta(0,0)>0$, $\delta(y_1,y_2)=0$ if $y_1^2+y_2^2\geq\epsilon^2/2$, and with $\cal C^2$-norm smaller than $\epsilon$ 
\item a smooth function $\phi:[0,\epsilon^2_*]\to[0,1]$ that satisfies $\phi=1$ on $[0,\epsilon_*^2/3]$ and $\phi=0$ on $[2\epsilon_*^2/3,\epsilon_*^2]$.
\end{itemize}
We define then
\[\rho''=(x_1-\delta(y_1,y_2))^2+x_2^2\pm y_1^2\pm y_2^2,\]
and 
\[\rho'=(1-\phi(x_1^2+x_2^2))\rho+\phi(x_1^2+x_2^2)\rho''.\]

The function $\rho'$ coincides with $\rho$ outside $W_*$, therefore it satisfies condition one of the lemma. Moreover, it is $\epsilon$ close to $\rho$ in the $\cal C^2$ norm, so that it satisfies condition 2 if $\epsilon$ is small enough. In this case the point $p'_*$ of coordinates $(\delta(0,0),0,0,0)$ is the only critical point of $\rho'$ in $W_*$, therefore in $\rho^{-1}(I_*)$. We have then $\rho'(p_*)=\delta(0,0)^2>0$, hence condition 3 is equally satisfied.

We have to prove condition 4. For this we will take slices $(y_1,y_2)=(c_1,c_2)$ as in the proof of Proposition \ref{prop:boitesdehartogs}, where $c_1^2+c_2^2\leq\epsilon^2$. If $x_1^2+x_2^2\geq\epsilon_*^2/3$ then we have
\[\rho=x_1^2+x_2^2\pm y_1^2\pm y_2^2\geq\epsilon_*^2/3-\epsilon^2>0\] 
and
\[\rho''=(x_1-\delta(y_1,y_2))^2+x_2^2\pm y_1^2\pm y_2^2=\epsilon_*^2/3+O(\epsilon)>0\]
therefore $\rho'>0$. If $x_1^2+x_2^2<\epsilon_*^2/3$, then $\phi=1$, and hence $\rho>0$ and $\rho'>0$ are equations of the exterior of small discs contained in $x_1^2+x_2^2<\epsilon_*^2/3$. In summary, in the slice $(y_1,y_2)=(c_1,c_2)$ the place where $\rho$ and $\rho'$ are $>0$ is described by the exterior of the union of two small discs contained in the disc centered in the origin and of radius $\epsilon_*/\sqrt{3}$. In particular, this shows that the set 
\[\{\rho>0\}\cap\{\rho'>0\}\cap W_*\]
is connected. Now, the set $\{\rho>0\}$ retracts by deformation to $\{\rho>0\}\cap V\setminus W_*$ (by a radial retraction in coordinates $x$). The equality
\[\{\rho>0\}\cap\{\rho'>0\}=\left(\{\rho>0\}\setminus W_*\right)\cup\left(\{\rho>0\}\cap\{\rho'>0\}\cap W_*\right)\]
shows that this set is connected and concludes the proof of Lemma \ref{lem:perturbation}.
\end{proof}

\subsection{Extension through the exceptional set}\label{sec:extensionsurA}

We will need the following two results:

\begin{lem}\cite[Corollary 1.5]{Ivashkovich}\label{lem:extensionfonctionspacenormal}
Let $Y$ be a reduced and normal complex surface. Let $B\subset Y$ be a finite set of points. Then every meromorphic function $f$ on $Y\setminus B$ extends to a meromorphic function on $Y$.
\end{lem}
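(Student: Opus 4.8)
The plan is to make the statement local and to extend the \emph{graph} of $f$ as an analytic set rather than to extend $f$ directly. Since $B$ is finite, extension is a question local around each of its points, so it suffices to fix $b\in B$, to replace $Y$ by a small (connected) neighbourhood of $b$, and to extend a meromorphic function $f$ on $Y\setminus\{b\}$ across the single point $b$. First I would recall Remmert's description of a meromorphic function on the reduced space $Y\setminus\{b\}$ by its graph: the closure $\Gamma_f$ of $\{(y,f(y))\}$ inside $(Y\setminus\{b\})\times\Proj^1$ is an analytic subset of pure dimension $2=\dim Y$, whose first projection to $Y\setminus\{b\}$ is proper and is biholomorphic away from the nowhere dense, analytic polar set of $f$.

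The key step is to continue this analytic set across $b$. View $\Gamma_f$ inside the reduced complex space $Z:=Y\times\Proj^1$ of dimension $3$, from which we have removed the analytic subset $S:=\{b\}\times\Proj^1$ of dimension $1$. Since $\dim S=1<2=\dim\Gamma_f$, the Remmert--Stein extension theorem (valid on reduced complex spaces, not only on manifolds) guarantees that the closure $\overline{\Gamma_f}$ of $\Gamma_f$ in $Z$ is again analytic of pure dimension $2$. No component of $\overline{\Gamma_f}$ can lie over $b$, since such a component would be contained in $S$, which is only one-dimensional; hence $\overline{\Gamma_f}$ is irreducible (the base $Y\setminus\{b\}$ being connected), dominates $Y$, and its projection $p:\overline{\Gamma_f}\to Y$ is proper (because $\Proj^1$ is compact) and generically one-to-one. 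Note that the Remmert--Stein step needs only that $b$ be a point, so it is insensitive to whether $b$ is a smooth or a singular point of $Y$.

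It remains to recover a genuine meromorphic function, and this is exactly where normality intervenes. The pure-dimensional analytic set $\overline{\Gamma_f}$, proper and generically one-to-one over $Y$, is precisely the datum of a meromorphic function on the normalization $\nu:\hat Y\to Y$ extending $\nu^*f$. Because $Y$ is normal, $\nu$ is a biholomorphism, so $\overline{\Gamma_f}$ is directly the graph of a meromorphic function $\bar f$ on $Y$ that restricts to $f$ on $Y\setminus\{b\}$; equivalently, normality provides the second Riemann extension theorem across the point $b$ (of codimension $2$), so that a local representation $f=g/h$ away from $b$ extends holomorphically to $b$. The main obstacle is therefore not the analytic continuation of the graph, which Remmert--Stein handles cleanly, but this final descent from the extended analytic set to an honest section of the sheaf of meromorphic functions on $Y$. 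The hypothesis is sharp here: on a non-normal surface there already exist holomorphic functions on a punctured neighbourhood of a point that fail to extend, the failure of Riemann extension being equivalent to non-normality, so the statement would be false without it.
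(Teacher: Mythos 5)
The paper itself contains no proof of this lemma: it is imported verbatim as Corollary 1.5 of Ivashkovich's paper, so there is no internal argument to measure yours against, and your proposal has to be judged on its own. Up to the Remmert--Stein step your argument is sound and is indeed the natural graph-theoretic strategy (close in spirit to Ivashkovich's own techniques): localizing at a point $b\in B$, replacing $f$ by the closure of its graph $\Gamma_f\subset(Y\setminus\{b\})\times\Proj^1$, extending that closure across the one-dimensional set $S=\{b\}\times\Proj^1$, and checking purity of dimension, irreducibility, properness and generic injectivity of the projection $p:\overline{\Gamma_f}\to Y$ are all correct (note that $Y\setminus\{b\}$ is normal, which is what the graph description really requires, rather than merely reduced).

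The gap is in the final descent, which is exactly where the content of the lemma sits. Remmert--Stein only hands you a meromorphic \emph{map} from $Y$ to $\Proj^1$ in Remmert's sense, and $b$ is precisely a potential indeterminacy point of it (fibre $\{b\}\times\Proj^1$); to get a meromorphic \emph{function}, i.e.\ a section of the sheaf of quotients, you must exhibit $g,h\in\mathcal{O}(U)$ on some neighbourhood $U$ of $b$ with $f=g/h$ on $U\setminus\{b\}$. Your ``equivalently'' sentence does not achieve this: the representations $f=g/h$ you possess on $Y\setminus\{b\}$ are local, on small open sets avoiding $b$, so there is no pair of functions on a punctured neighbourhood of $b$ to which the second Riemann extension theorem could be applied. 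The missing step is the construction of a global denominator near $b$. For instance: the polar set $P$ of $f$ has analytic closure $\overline{P}$ in $U$ (Remmert--Stein again, now for the curve $P$ across the point $b$); choose $h'\in\mathcal{O}(U)$ vanishing on $\overline{P}$ and put $h=(h')^N$ with $N$ at least the maximal pole order of $f$ along the finitely many branches of $\overline{P}$ at $b$. Shrinking $U$ so that $Y$ is smooth off $b$ (possible, since a normal surface has isolated singularities), $hf$ has no poles in codimension one on $U\setminus\{b\}$ and is therefore holomorphic there; only now does the second Riemann extension theorem across the codimension-two set $\{b\}$ apply --- this is where normality genuinely intervenes --- producing $G\in\mathcal{O}(U)$ with $f=G/h$ on $U\setminus\{b\}$, which is the desired meromorphic extension. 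Alternatively you may quote Remmert's theorem that meromorphic maps into $\Proj^1$ from a normal space are exactly its meromorphic functions; but the nontrivial direction of that theorem at indeterminacy points is proved by the very construction above, so asserting it as self-evident (``is precisely the datum of''), and backing it with the incorrect Riemann-extension shortcut, conceals the only step where the hypothesis of normality actually does its work.
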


\begin{prop}[Remmert's Reduction]\cite[Section 2.1]{Peternell}\label{prop:reduccionderemmert}
Let $V$ be a holomorphically convex complex surface. Then there exists a normal Stein space $Y$ and a proper and surjective holomorphic function $\pi:V\to Y$ such that:
\begin{enumerate}
\item The fibers of $\pi$ are connected.
\item $\pi_\ast(\cal O_V)=\cal O_Y$.
\item The canonical map $\cal O_Y(Y)\to\cal O_V(V)$ is an isomorphism.
\item For every holomorphic map $\sigma:V\to Z$ with $Z$ a Stein space there exists a unique holomorphic map $\tau:Y\to Z$ such that $\sigma=\tau\circ\pi$.
\end{enumerate}
\end{prop}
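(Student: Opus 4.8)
The plan is to realize $Y$ explicitly as the quotient of $V$ that identifies points not separated by global holomorphic functions, and then to show that this quotient carries the structure of a normal Stein space with the stated universal property. Concretely, I would introduce on $V$ the equivalence relation $x\sim y$ defined by $f(x)=f(y)$ for every $f\in\cal O(V)$, set $Y:=V/\!\sim$ with the quotient topology, and let $\pi\colon V\to Y$ be the canonical projection. The structure sheaf on $Y$ is forced by property (2): I would \emph{define} $\cal O_Y:=\pi_\ast\cal O_V$, that is $\cal O_Y(W):=\cal O_V(\pi^{-1}(W))$ for $W\subset Y$ open. With these definitions properties (2) and (3) hold by construction, since $\cal O_Y(Y)=\cal O_V(\pi^{-1}(Y))=\cal O_V(V)$, and global functions separate the fibres of $\pi$ from one another. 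The entire difficulty is then to prove that $(Y,\cal O_Y)$ is a complex space, that it is Stein and normal, that $\pi$ is proper with connected fibres, and finally to deduce the universal property.

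First I would establish that the fibres of $\pi$ are compact. If $F=\pi^{-1}(\pi(x))$ is the class of $x$, then $f(y)=f(x)$ for all $f\in\cal O(V)$ forces $\abs{f(y)}\le\abs{f(x)}$, so $F$ is contained in the holomorphic hull $\widehat{\{x\}}$; holomorphic convexity of $V$ makes this hull compact, hence $F$ is a compact analytic subset on which every element of $\cal O(V)$ is constant. Properness of $\pi$ then follows from compactness of the fibres together with the fact that a holomorphically convex $V$ admits a plurisubharmonic exhaustion whose sublevel sets are holomorphically convex, so that preimages of compacta in $Y$ are compact.

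The analytically deep step is to endow $Y$ with the structure of a reduced complex space locally modelled on an analytic set. Here the coherence of $\pi_\ast\cal O_V$ (Grauert's direct image theorem for the proper map $\pi$), together with the Cartan--Serre finiteness theorem, provides around each fibre $F$ finitely many functions $f_1,\dots,f_N\in\cal O(V)$ that separate $\pi(F)$ from nearby fibres and realise a neighbourhood of $\pi(F)$ in $Y$ as an analytic subset of a polydisc in $\C^N$; compatibility of these local charts is guaranteed because $\cal O_Y=\pi_\ast\cal O_V$ pins down the sheaf of regular functions. This is exactly the point that requires coherence of direct images of coherent sheaves under proper holomorphic maps, and I expect it to be the main obstacle of the whole argument. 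Granting it, $Y$ is a reduced complex space, holomorphically separable by construction, and holomorphically convex because $\pi$ is proper and $V$ is holomorphically convex; by Grauert's characterisation of Stein spaces (a reduced, holomorphically convex, holomorphically separable complex space is Stein) the space $Y$ is Stein.

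It remains to obtain connectedness of the fibres, normality, and the universal property. For connectedness I would apply the Stein factorisation $V\xrightarrow{g}Y'\xrightarrow{h}Y$ of the now-established proper holomorphic map $\pi$, in which $g$ has connected fibres and satisfies $g_\ast\cal O_V=\cal O_{Y'}$, while $h$ is finite; the identity $\cal O_Y=\pi_\ast\cal O_V=h_\ast g_\ast\cal O_V=h_\ast\cal O_{Y'}$ shows that the finite map $h$ induces an isomorphism of structure sheaves, hence is biholomorphic, so $\pi=g$ already has connected fibres, giving property (1). Normality of $Y$ then follows because $V$ is smooth (in the present situation $V$ is a strongly pseudoconvex domain in a complex surface) and the direct image of the structure sheaf of a normal space under a proper map with connected fibres remains integrally closed, hence normal. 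Finally, for property (4), given a holomorphic $\sigma\colon V\to Z$ with $Z$ Stein, every $h\in\cal O(Z)$ pulls back to $\sigma^\ast h\in\cal O(V)$, which is constant on the connected compact fibres of $\pi$; since holomorphic functions separate points of the Stein space $Z$, the map $\sigma$ is itself constant on fibres and descends to a continuous $\tau\colon Y\to Z$ with $\sigma=\tau\circ\pi$. Holomorphy of $\tau$ follows from $\tau^\ast\cal O_Z\subset\cal O_Y=\pi_\ast\cal O_V$, and uniqueness is immediate from surjectivity of $\pi$. This establishes all four assertions.
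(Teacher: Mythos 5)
The paper offers no proof of this proposition: it is quoted as a classical result from the reference \cite[Section 2.1]{Peternell}, so your proposal can only be measured against the standard proof of Remmert's reduction (Remmert, Grauert--Remmert). Your overall architecture is the right one --- quotient $V/\!\sim$ by the relation defined by global holomorphic functions, structure sheaf $\pi_\ast\mathcal{O}_V$, Steinness via Grauert's characterisation (holomorphically convex and holomorphically separable), connectedness of fibres via Stein factorisation, and the universal property by descent of $\sigma$ along $\pi$. The end-game steps are sound: a finite map $h$ with $h_\ast\mathcal{O}_{Y'}=\mathcal{O}_Y$ is indeed biholomorphic, and your descent argument for property (4) is the standard one.

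There is, however, a genuine circularity exactly at the point you yourself flag as the main obstacle. You invoke Grauert's direct image theorem for the proper map $\pi\colon V\to Y$ in order to manufacture the charts that make $Y$ a complex space --- but Grauert's theorem applies to a proper \emph{holomorphic} map between \emph{complex spaces}, and at that stage $Y$ is only a topological quotient: neither ``holomorphic map to $Y$'' nor ``coherent sheaf on $Y$'' is yet defined. For the same reason your properness argument is premature: compactness of the equivalence classes (which you prove correctly via the hull of a point) does not by itself give that the quotient is Hausdorff and locally compact, nor that preimages of compacta are compact; one must show the decomposition into fibres is upper semicontinuous, and in the classical proof this comes out of the local models, not before them. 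The correct replacement is Remmert's original route: every compact fibre $F$ admits arbitrarily small holomorphically convex saturated neighbourhoods on which finitely many \emph{global} functions $f_1,\dots,f_N\in\mathcal{O}(V)$ separate the nearby fibres; the map $(f_1,\dots,f_N)$ restricted to such a neighbourhood has the equivalence classes as fibres and, by Remmert's proper mapping theorem, its image is an analytic subset of a domain in $\C^N$, which then serves as the local model for $Y$. All coherence and finite-mapping input is applied to maps into $\C^N$, never to $\pi$ itself; only after $Y$ is thereby a complex space do Grauert's theorem and Stein factorisation become available for your remaining steps. (Incidentally, the Cartan--Serre finiteness theorem, a statement about compact complex spaces, is not the relevant tool here, and your normality argument, while essentially correct in spirit, needs the fact that weakly holomorphic functions on $Y$ pull back to weakly holomorphic functions on the normal space $V$ and then descend through $\pi_\ast\mathcal{O}_V=\mathcal{O}_Y$.)
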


Now we can proceed to the last step of our proof.

\begin{prop}\label{prop:extensionsurA}
Let $X$ be a compact complex surface, let $V\Subset X$ be a strongly pseudoconvex domain and let $A$ be the exceptional set of $V$. Let $\cal G$ be a holomorphic foliation on $V\setminus A$. Then $\cal G$ extends to a holomorphic foliation $\tilde{\cal G}$ on $V$.
\end{prop}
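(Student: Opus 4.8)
The plan is to descend the foliation to the normal Stein reduction of $V$, extend it there across a finite set by meromorphic continuation, and pull the result back.

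First I would apply Remmert's reduction. Since $V$ is strongly pseudoconvex it is holomorphically convex by the theorem of Grauert and Narasimhan, so Proposition \ref{prop:reduccionderemmert} provides a normal Stein space $Y$ and a proper surjective map $\pi\colon V\to Y$ with connected fibres. By definition of the exceptional set, $\pi$ contracts $A$ to a finite set $B:=\pi(A)$ and restricts to a biholomorphism $V\setminus A\to Y\setminus B$. Since $Y$ is normal of dimension $2$, its singular locus is finite and contained in $B$, so $Y\setminus B$ is a smooth connected surface. Transporting $\cal G$ by $\pi$ yields a non-singular holomorphic foliation $\cal G_0$ on $Y\setminus B$. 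It therefore suffices to extend $\cal G_0$ to a singular holomorphic foliation $\cal G_Y$ on all of $Y$: pulling $\cal G_Y$ back by $\pi$ and dividing out the divisorial component supported on $A$ will then give a singular holomorphic foliation $\tilde{\cal G}$ on $V$ coinciding with $\cal G$ on $V\setminus A$, as required.

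To extend $\cal G_0$ I would represent it by a meromorphic ``slope'' function, following the strategy announced at the start of this section. As $Y$ is Stein of dimension $2$, a generic linear projection of a proper embedding $Y\hookrightarrow\C^N$ furnishes two global holomorphic functions $g_1,g_2$ on $Y$ with $\d g_1\wedge\d g_2\not\equiv 0$; for a generic such choice the foliation $\cal G_0$ is not the one with first integral $g_2$, so that if $\omega$ is a local holomorphic $1$-form defining $\cal G_0$ as in Proposition \ref{prop:deffeuilletageformediff}, writing $\omega=a\,\d g_1+b\,\d g_2$ on $Y\setminus B$ we have $\omega\wedge\d g_2\not\equiv 0$. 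Then
\[
f:=\frac{\omega\wedge\d g_1}{\omega\wedge\d g_2}=-\frac{b}{a}
\]
is a meromorphic function on $Y\setminus B$, independent of the scaling of $\omega$ and hence globally well defined, and $\omega$ is proportional to $\d g_1-f\,\d g_2$. By Ivashkovich's Lemma \ref{lem:extensionfonctionspacenormal}, applied to the reduced normal surface $Y$ and the finite set $B$, the function $f$ extends to a meromorphic function $\hat f$ on $Y$. The meromorphic $1$-form $\hat\eta:=\d g_1-\hat f\,\d g_2$ then defines a foliation on $Y$; clearing its poles locally (writing $\hat f=P/Q$ and passing to $Q\,\d g_1-P\,\d g_2$, then removing common factors to obtain isolated zeros) produces, as in Propositions \ref{prop:deffeuilletageformediff} and \ref{prop:sectmerom}, a singular holomorphic foliation $\cal G_Y$ on $Y$ restricting to $\cal G_0$ on $Y\setminus B$.

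The main obstacle is precisely this meromorphic continuation across the points of $B$, where $Y$ may be singular: it is exactly what forces the passage to the normal Stein reduction, so that the naive Hartogs-type extension across an isolated point in $\C^2$ is replaced by Ivashkovich's extension theorem on a reduced normal surface. Once $\cal G_Y$ is in hand the pull-back step is routine: $\pi^*\hat\eta=\d(g_1\circ\pi)-(\hat f\circ\pi)\,\d(g_2\circ\pi)$ is a meromorphic $1$-form on $V$, not identically zero since $\pi$ is birational, and after saturating (removing its zero/pole divisor along $A$) it defines the desired extension $\tilde{\cal G}$, which is a singular holomorphic foliation on $V$ in the sense of this paper. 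It remains only to check that the local defining forms glue and that saturation is well defined along $A$; these are standard and use only that $\pi$ is an isomorphism over $Y\setminus B$.
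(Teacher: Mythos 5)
Your proof is correct, and its skeleton is the same as the paper's: both pass to the normal Stein space $Y$ given by Remmert's reduction (Proposition \ref{prop:reduccionderemmert}), encode the foliation by a meromorphic ``slope'' function, extend that function across the finite set $\pi(A)$ using Ivashkovich's result (Lemma \ref{lem:extensionfonctionspacenormal}), and pull back by $\pi$. The genuine difference is where the reference frame defining the slope comes from. The paper writes the slope with respect to two global meromorphic $1$-forms $\Omega_0,\Omega_1$ on $X$, obtained by pulling back rational forms under a projective embedding of $X$: it builds $F$ on $V\setminus A$ upstairs, pushes it down to $Y\setminus\pi(A)$, extends it there, and pulls back to get $\hat\Omega=\Omega_0+\hat F\,\Omega_1$ on $V$. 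This uses the algebraicity of $X$ (consistent with Theorem \ref{thm:extension}, but strictly more than the proposition's stated hypothesis of a compact complex surface). You instead produce the frame downstairs: Steinness of $Y$ supplies global holomorphic functions $g_1,g_2$ with $\d g_1\wedge\d g_2\not\equiv0$, and the slope is the ratio $(\omega\wedge\d g_1)/(\omega\wedge\d g_2)$. What this buys is independence from any algebraic structure on the ambient surface, so your argument proves the proposition exactly as stated. Two small points to tidy: (i) your genericity claim that $g_2$ is not a first integral of $\cal G_0$ is most cleanly replaced by the observation that $\omega\wedge\d g_1$ and $\omega\wedge\d g_2$ cannot both vanish identically (otherwise $\d g_1\wedge\d g_2\equiv0$ on the connected manifold $Y\setminus\pi(A)$), so after possibly exchanging $g_1$ and $g_2$ the denominator is $\not\equiv0$; this is the same dichotomy the paper invokes in the proof of Lemma \ref{lem:extensionvoisinage}; (ii) you do not need a full proper embedding $Y\hookrightarrow\C^N$, only that global holomorphic functions on the Stein space $Y$ give local coordinates at some smooth point, which is weaker and avoids any discussion of embedding dimension.
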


\begin{proof}
The idea is similar to that used to exhibit differential forms defining the foliation in a neighbourhood of singularities: we will extend the slope of the leaves.

First we will construct this slope function outside $A$. Let $\Phi:X\to\C\Proj^N$ be an embedding of the algebraic surface $X$. Let $[z_0:\ldots:z_N]$ be homogeneous coordinates of $\C\Proj^N$. Up to reducing $N$ and composing $\Phi$ by an automorphism of $\C\Proj^N$, we may assume that $\Phi(X)$ is not contained in the hyperplane $\{z_N=0\}$ and hence $\Omega_1:=\Phi^*\d(z_0/z_N)$ and $\Omega_2:=\Phi^*\d (z_1/z_N)$ are a basis of the space of meromorphic differential 1-forms on $X$.

Let $\{U_j\}_{j\in J}$ be a cover of $V\setminus A$ such that on each $U_j$ the foliation $\cal G$ is defined by $\omega_j:=f_j\Omega_0+g_j\Omega_1$, where $f_j$ and $g_j$ are meromorphic functions. On $U_j \cap U_k\neq\emptyset$ we have
\[\Omega_0+\frac{g_j}{f_j}\Omega_1=\Omega_0+\frac{g_k}{f_k}\Omega_1.\]
The collection $\left\{g_k/f_k\right\}$ defines then a meromorphic function on $V\setminus A$, noted $F$. The foliation $\cal G$ is then defined on $V\setminus A$ by
\[\Omega:=\Omega_0+F\Omega_1,\]
and the function $F$ corresponds to the slope of the leaves. Let $\pi:V\to Y$ be the Remmert's Reduction of $V$ and let $\tilde F$ be the meromorphic function defined on $Y\setminus\pi(A)$ by $\tilde F:=F\circ\pi^{-1}$. Since $\pi(A)$ is a finite set of points, and since $V$ is a normal analytic space, $\tilde F$ extends to a meromorphic function on $Y$, noted $\tilde F_{ext}$ (see Lemma \ref{lem:extensionfonctionspacenormal}). We pullback this function to a meromorphic function on $V$ by setting $\hat F:=\tilde F_{ext}\circ\pi$. We define like that a meromorphic 1-form on $V$
\[\hat\Omega:=\Omega_0+\hat F\Omega_1\]
that extends the meromorphic form $\Omega$. This extends the foliation $\cal G$ to the exceptional set $A$.
\end{proof}

\section{Transversely affine foliations}\label{sec:feuilletagestransaffines}

In this part we apply the previous results in the context of transversely affine foliations. In the first section we define these foliations on compact 3-manifolds and we study the case of hyperbolic torus bundles as in \cite{Ghys-Sergiescu}. In the second section we define degenerate transversely affine foliations on complex surfaces as in Scárdua's work \cite{Scardua}. We have chosen to call these foliations \emph{degenerate} instead of \emph{singular} to avoid creating any confusion with a singular holomorphic foliation. The third section is devoted to an extension theorem for transversely affine foliations using the same ideas as in the last section. We show next that a transversely affine Levi-flat hypersurface in a compact algebraic complex surface necessarily has a transverse invariant measure. This allows us to establish that if a hyperbolic torus bundle appears as a Levi-flat hypersurface in a compact algebraic complex surface, then its Cauchy-Riemann foliation necessarily has a compact leaf. Indeed, Ghys and Sergiescu \cite{Ghys-Sergiescu} give a classification of foliations on these bundles: up to conjugation, these foliations either have a compact leaf or they correspond to stable and unstable foliations of Anosov's flow. These last foliations are transversely affine and do not have a transverse invariant measure.

\subsection{Transversely affine foliations on 3-manifolds}

Let $M$ be a compact 3-manifold and $\cal F$ a foliation by Riemann surfaces on $M$. We say that $\cal F$ is \emph{transversely affine} if it has an atlas for which the transversal changes of coordinates $g_{jk}$ of Definition \ref{def:feuilletage3reel} are affine or, equivalently, if there exists a cover $\{U_j\}_{j\in J}$ of $M$ and a family of local submersions $g_j:U_j\to\R$ such that for all $j,k\in J$, on $U_j\cap U_k$ there exist $a_{jk}\in\R^*,b_{jk}\in\R$ such that $g_j=a_{jk}g_k+b_k$. For more details one can consult the book of Godbillon \cite[Chapter III]{Godbillon}.

Before passing to complex surfaces, we study the example of hyperbolic torus bundles of Ghys and Sergiescu \cite{Ghys-Sergiescu}.

\paragraph{Hyperbolic torus bundles and model foliations}\label{par:fibreshyperboliquesentores}

Let $\T^2=\R^2/\Z^2$ be a real torus and let $A$ be a matrix in $\mathrm{GL}(2,\Z)$. Let $\T^3_A$ be the 3-manifold obtained as the quotient of $\T^2\times\R$ by the equivalence relation $(p,0)\sim(Ap,1)$ for all $p\in\T^2$. We call such a manifold a torus bundle over the circle.

We say that this bundle is hyperbolic if $A$ is hyperbolic, i.e. if $\abs{\tr A}>2$. We will restrict our study to the case $\det A=1$ and $\tr A>2$.

A hyperbolic matrix $A$ defines a hyperbolic automorphism of the torus $\T^2$. This automorphism has two eigenvectors of irrational slope. The foliation of the plane $\R^2$ by parallel lines to one of these directions passes to the quotient torus $\T^2$ as a linear foliation. Moreover the foliation produced on $\T^2\times\R$ is invariant by the map $(p,0)\sim(Ap,1)$ and hence it defines a foliation by planes on the bundle $\T^3_A$. We call these foliations the model foliations of $\T^3_A$. They are all transversely affine and moreover

\begin{thm}\cite{Ghys-Sergiescu}\label{thm:Ghysfeuilletages}
Let $\T^3_A$ be an orientable hyperbolic torus bundle. Then every transversely orientable foliation of class $\cal C^r$ on $\T^3_A$ with $r\geq2$ and without compact leaves is $\cal C^{r-2}$ conjugated to one of the model foliations.
\end{thm}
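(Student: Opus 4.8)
The plan is to reduce the classification to the dynamics of the solvable group $\pi_1(\T^3_A)\cong\Z^2\rtimes_A\Z$ acting on the leaf space of the foliation lifted to the universal cover, and then to upgrade the resulting topological conjugacy to a $\cal C^{r-2}$ one by a linearization argument near a hyperbolic holonomy fixed point. Write $p:\T^3_A\to\s^1$ for the bundle projection and $T\cong\T^2$ for a fiber, so that $A$ is realized as the monodromy of $\cal F$ through $T$.

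First I would fix the topological model. Since $\cal F$ has no compact leaf it has no Reeb component, so by Novikov's theorem $\cal F$ is taut and every leaf is $\pi_1$-injective; lifting to the universal cover $\R^3$, every leaf is a plane, and by Palmeira's theorem on foliations by planes the leaf space $Q$ of the lifted foliation is a simply connected, a priori non-Hausdorff, $1$-manifold carrying an order-preserving action of $\pi_1(\T^3_A)$ (order-preserving because $\cal F$ is transversely orientable). The first real point is to rule out branching, i.e. to show $Q\cong\R$: a branch point would have to be permuted by the solvable action, and combined with the hyperbolicity of $A$ and the absence of compact leaves such a configuration cannot be invariant.

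Next I would analyze the action on $Q\cong\R$. The normal subgroup $\Z^2$ acts freely, since a fixed point would give a $\Z^2$-invariant leaf, hence a compact torus leaf, which is excluded; by Hölder's theorem this free order-preserving action is semiconjugate to a group of translations by a lattice $\Lambda\subset\R$. The hypothesis $r\geq2$ enters precisely here: a Denjoy/Sacksteder-type argument rules out exceptional minimal sets, so the action is minimal and the semiconjugacy is a genuine topological conjugacy. A generator of the quotient $\Z$ then normalizes the translation group, acting as an affine map $x\mapsto\lambda x+c$ and conjugating $\Lambda$ both to $\lambda\Lambda$ and, group-theoretically, to its image under $A$. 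Matching these forces $\lambda$ to be an eigenvalue of $A$ and $\Lambda$ to be the projection of $\Z^2$ along the corresponding eigendirection; since $\tr A>2$ the only two possibilities are the stable and unstable directions. This identifies the trace of $\cal F$ on $T$ with an irrational linear foliation of eigendirection slope and yields a topological conjugacy of $\cal F$ to one of the two model foliations.

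Finally, the differentiable rigidity, which I expect to be the main obstacle. The holonomy transverse to $\cal F$ is generated by the dense translations coming from $\Lambda$ together with the return map along the base, which is a $\cal C^r$ diffeomorphism of the transversal with a hyperbolic fixed point of multiplier $\lambda$. I would apply a Sternberg-type linearization theorem to straighten this contraction in the transverse coordinate, and then spread the resulting affine chart over the whole transversal using the density of the translation holonomy, thereby producing a transverse $\cal C^{r-2}$ affine structure and a $\cal C^{r-2}$ conjugacy to the model. The delicate part is the bookkeeping of regularity: the linearizing coordinate for the contraction and the merely $\cal C^r$ translation holonomy must be reconciled with uniform estimates across the transversal, and it is exactly this reconciliation that costs two derivatives and accounts for the $\cal C^{r-2}$ in the statement.
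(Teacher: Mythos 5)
The paper itself offers no proof of this statement: it is quoted as an external theorem of Ghys and Sergiescu \cite{Ghys-Sergiescu}, so your proposal has to be measured against their original argument. Your topological skeleton (Novikov to exclude Reeb components, freeness of the $\Z^2$-action on the leaf space because a fixed point would force a compact torus leaf, H\"older's theorem, the observation that a homeomorphism normalizing a dense translation group is affine, Denjoy--Sacksteder to upgrade semiconjugacy to conjugacy) is a reasonable architecture. But at two critical points you assert exactly what has to be proved. The first is the claim that the leaf space $Q$ of the lifted foliation has no branch points: solvability of $\pi_1$ and hyperbolicity of $A$ do not by themselves forbid a $\pi_1$-invariant configuration of non-separated leaves (foliations whose lifts are by planes but whose leaf space branches do exist in general), and ``such a configuration cannot be invariant'' is not an argument. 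The classical treatment sidesteps this entirely by working with the holonomy pseudogroup on a compact transversal rather than with the leaf space of the universal cover. Note also that your phrase ``every leaf is a plane'' is false even for the model foliations, which have finitely many cylinder leaves (through the closed orbits corresponding to fixed points of $A$ on $\T^2$); only the lifted leaves are planes, and those cylinder leaves play a key role below.

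The more serious gap is the differentiable rigidity, which is the actual content of the theorem and the reason for the exponent $r-2$. To invoke Sternberg you first need the return map $h$ to have derivative $\neq 1$ at its fixed point; the topological conjugacy produced in your first part only gives topological hyperbolicity, and a topologically hyperbolic fixed point can perfectly well have multiplier $1$. Ruling this out requires Sacksteder's theorem (a second, independent use of the hypothesis $r\geq 2$), which you never invoke for this purpose. More importantly, once $h$ is linearized, ``spreading the affine chart by density of the translation holonomy'' merely reproduces the topological conjugacy you already have: the substance is to prove that the commuting $\cal C^r$ holonomy diffeomorphisms $f_v$, $v\in\Z^2$, which satisfy $h f_v h^{-1} = f_{Av}$ with $h$ now linear, are themselves \emph{affine} in the Sternberg coordinate. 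That step requires Kopell-type commutator and distortion estimates, with uniform control of the derivatives of $f_{A^n v}$ along the zooming sequence $h^n$ (these are holonomies of longer and longer loops, so uniformity is not free), and it is precisely there that the regularity is lost. Your closing paragraph acknowledges this difficulty but does not contain the estimate, so as it stands the proposal establishes at best a topological conjugacy, not the $\cal C^{r-2}$ statement.
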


\subsection{Transversely affine foliations on complex surfaces}

In this section we define the notion of degenerate transversely affine foliation on a complex surface as in the papers of Scárdua \cite{Scardua}, Camacho and Scárdua \cite{Camacho-Scardua} and Cousin-Pereira \cite{Cousin-Pereira}. The latter gives a beautiful classification of these foliations that precises Singer's characterization of foliations having a liouvillian first integral.

\subsubsection{Transversely affine foliations}

\begin{deff}
Let $X$ be a complex surface. Let $\cal G$ be a (non-singular) holomorphic foliation on $X$. The foliation $\cal G$ is transversely affine if there exists a foliated atlas $\{(U_j,\varphi_j)\}_{j\in J}$ such that on $U_j\cap U_k\neq\emptyset$ we have
\[\varphi_{jk}(z_k,w_k)=(f_{jk}(z_k,w_k),a_{jk}w_k+b_{jk})=(z_j,w_j)\]
with $a_{jk}, b_{jk}\in\C$, $a_{jk}\neq0$. A singular holomorphic foliation $\cal G$ defined on a complex surface $X$ is transversely affine if it is transversely affine on $X\setminus\sing(\cal G)$.
\end{deff}

\begin{prop}
Let $X$ be a complex surface, $M$ be an analytic Levi-flat hypersurface in $X$ and $\cal F$ the Cauchy-Riemann foliation of $M$. Let $U$ be a neighbourhood of $M$ in $X$ such that $\cal F$ extends to a non-singular holomorphic foliation $\cal G$ on $U$ (such a neighbourhood exists by Proposition \ref{prop:extentionvoisinage}). If $\cal F$ is transversely affine on $M$ then $\cal G$ is transversely affine on $U$.
\end{prop}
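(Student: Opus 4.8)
The plan is to transport the real-analytic affine atlas of $\cal F$ on $M$ to a holomorphic affine atlas of $\cal G$ on a neighbourhood of $M$, using the local holomorphic description of $M$ together with the identity principle. First I would fix the local holomorphic data: by Lemma \ref{lem:formelocaleLeviPlat}, every point of $M$ has a neighbourhood $U_\alpha$ carrying a holomorphic function $H_\alpha=u_\alpha+iv_\alpha$ with $\d H_\alpha\neq0$ and $M\cap U_\alpha=\{v_\alpha=0\}$. After shrinking, the level sets of $H_\alpha$ are the leaves of $\cal G$, so the $H_\alpha$ are holomorphic submersions defining $\cal G$ in the sense of Proposition \ref{prop:defparsubmersions}. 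On $M$ one has $H_\alpha=u_\alpha\in\R$, and $u_\alpha$ is a real-analytic transversal coordinate for $\cal F$.

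The key step is to complexify the affine coordinate. Since $\cal F$ is transversely affine, by Definition \ref{def:feuilletage3reel} it admits a real-analytic atlas whose transversal coordinates $t_j$ satisfy $t_j=a_{jk}t_k+b_{jk}$ with $a_{jk}\in\R^*$ and $b_{jk}\in\R$. On a common domain $t_j$ and $u_\alpha$ are two real-analytic submersions with the same fibres (the leaves of $\cal F$), so $t_j=\psi_j(u_\alpha)$ for a real-analytic diffeomorphism $\psi_j$ of intervals. Extending $\psi_j$ to a biholomorphism $(\psi_j)_\C$ of a complex neighbourhood of that interval, I would set
\[
G_j:=(\psi_j)_\C\circ H_\alpha .
\]
Then $G_j$ is again a holomorphic submersion defining $\cal G$ (because $(\psi_j)_\C$ is a local biholomorphism), and since $H_\alpha$ is real on $M$ one gets $G_j|_M=\psi_j(u_\alpha)=t_j\in\R$.

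Finally I would identify the transition maps. By Proposition \ref{prop:defparsubmersions} two submersions defining $\cal G$ differ by a holomorphic reparametrisation, so $G_j=\Phi_{jk}\circ G_k$ with $\Phi_{jk}$ holomorphic. On $M$ the coordinates $G_j,G_k$ are real and satisfy $G_j=t_j=a_{jk}t_k+b_{jk}=a_{jk}G_k+b_{jk}$; hence $\Phi_{jk}$ agrees on a real interval with the affine map $w\mapsto a_{jk}w+b_{jk}$. Taking the charts with connected overlaps, the interval has accumulation points in the domain, so the identity principle forces $\Phi_{jk}(w)=a_{jk}w+b_{jk}$ everywhere. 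Thus the charts $(z_j,G_j)$ form a foliated atlas for $\cal G$ whose transversal transitions are complex affine (with real coefficients), i.e. $\cal G$ is transversely affine on a neighbourhood of $M$, which we may take to be $U$ after shrinking. The \emph{main obstacle} is exactly the production of the holomorphic submersion $G_j$ restricting on $M$ to the prescribed affine coordinate $t_j$: this rests on the analyticity of $t_j$, which is guaranteed because the affine atlas lives inside the analytic atlas of Definition \ref{def:feuilletage3reel}, so that the reparametrisation $\psi_j$ complexifies; once this is in place, the affineness of the holomorphic transitions is an immediate consequence of analytic continuation.
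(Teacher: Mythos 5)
Your proof is correct and takes essentially the same route as the paper: both compare the holomorphic transversal transition of $\cal G$ with the real affine relation $t_j=a_{jk}t_k+b_{jk}$ on $M$ and conclude by the identity principle (the paper writes the transition as a power series $w_j=\sum_{l\geq 0}c_l w_k^l$ and reads off $c_0=b_{jk}$, $c_1=a_{jk}$, $c_l=0$ for $l\geq 2$). Your extra step --- complexifying the reparametrisation $\psi_j$ so that the holomorphic submersions restrict on $M$ to the prescribed affine coordinates $t_j$ --- makes explicit a normalisation that the paper's proof leaves implicit (it tacitly takes $\Re(w_j)|_M$ to be the affine coordinates), so your write-up is if anything more complete.
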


\begin{proof}
Let $\{(U_j,(z_j,w_j))\}$ be a family of charts of $M$ such that $M\cap U_j=\{\Im(w_j)=0\}$, see Lemma \ref{lem:formelocaleLeviPlat}. Since the extended foliation $\cal G$ is holomorphic, there exists a sequence $\{c_l\}$ of complex numbers such that
\[w_j=\sum_{l=0}^\infty c_lw_k^l.\]
If $\cal F$ is transversely affine, then for $t_j=\Re(w_j)$ and $t_k=\Re(w_k)$ we have
\[t_j=a_{jk}t_k+b_{jk}\]
where $a_{jk}\in\R^*$ and $b_{jk}\in\R$.
These two equations tell us that $c_0=b_{jk}$, $c_1=a_{jk}$ and $c_l=0$ for all $l\geq 2$. Hence $w_j=a_{jk}w_k+b_{jk}$, the foliation $\cal G$ is transversely affine.
\end{proof}

\subsubsection{Degenerate transversely affine foliations}\label{sec:feuilletagesaffinesdeg}

\begin{deff}\label{prop:formestransaffinecomplexe}
Let $X$ be a complex algebraic surface and $\cal G$ a holomorphic foliation on $X$. Let $\omega$ be a meromorphic 1-form on $X$ defining $\cal G$. We say that $\cal G$ has a degenerate transverse affine structure, or that $\cal G$ is a degenerate transversely affine foliation, if there exists a closed meromorphic 1-form $\eta$ on $X$ such that $\d\omega=\eta\wedge\omega$.
\end{deff}

Logarithmic foliations and Bernoulli foliations on $\Proj\C^2$ are examples of such structures, see \cite{Scardua} for more details. Degenerate transversely affine foliations can also be defined by connections, see the paper of Cousin and Pereira \cite{Cousin-Pereira}. If $\omega$ is a holomorphic section of $N_{\cal G}\otimes\Omega^1_X$, then $\cal G$ is degenerately transversely affine if there exists a divisor $D$ of $X$ and a meromorphic flat connection
\[\nabla:N_{\cal G}\to N_{\cal G}\otimes\Omega^1_X(*D)\]
such that $\nabla(\omega)=0$, where $\Omega^1_X(*D)$ is the sheaf of meromorphic 1-forms on $X$ with poles on $D$. The irreducible components of $D$ are invariant by the foliation $\cal G$. Moreover we have the following property, see \cite[Proposition 2.2]{Cousin-Pereira}

\begin{prop}\label{prop:metcourbnulle}
In $H^2(X,\C)$, the Chern class of the normal bundle $N_{\cal G}$ is equal to
\[c_1(N_{\cal G})=-\sum\alpha_C[C]\]
where the sum is over all the irreducible components of $D$, and where $\alpha_C\in\C$ is the residue $Res_C(\nabla)$ of any meromorphic 1-form defining $\nabla$ in a generic point of $C$.
\end{prop}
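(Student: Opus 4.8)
The plan is to exhibit a single closed meromorphic $1$-form on $X$ whose residues simultaneously encode $c_1(N_{\cal G})$ and the numbers $\alpha_C$, and then to invoke the classical residue theorem for such forms. Conceptually, the point is that a flat connection makes $N_{\cal G}$ holomorphically flat away from $D$, so its Chern class is forced to localise on $D$, and the residues of $\nabla$ measure exactly this localisation.

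First I would choose a global meromorphic section $s$ of the line bundle $N_{\cal G}$, which exists because $X$ is algebraic. Writing $\nabla s=\beta\otimes s$ defines a global meromorphic $1$-form $\beta$ on $X$, the logarithmic derivative of $s$ with respect to $\nabla$; a routine change-of-frame check shows $\beta$ is independent of the local trivialisation, hence globally well defined. The flatness of $\nabla$ means that its curvature $d\theta$ vanishes in any frame, and since $\beta=\frac{df}{f}+\theta$ locally (with $s=fe$ and $\nabla e=\theta\otimes e$) one gets $d\beta=0$: the form $\beta$ is \emph{closed}.

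The second step is to read off the residues of $\beta$ from the same local expression. Along an irreducible curve $E$ the residue of $\frac{df}{f}$ is $\mathrm{ord}_E(s)$, while $\theta$ is holomorphic away from $D$ and has residue $\alpha_C=\mathrm{Res}_C(\nabla)$ along each component $C$ of $D$. Hence $\mathrm{Res}_E(\beta)=\mathrm{ord}_E(s)$ if $E\not\subset D$ and $\mathrm{Res}_C(\beta)=\mathrm{ord}_C(s)+\alpha_C$ if $C\subset D$. Weighting the residues by fundamental classes and using that $[\mathrm{div}(s)]=c_1(N_{\cal G})$, this gives $\sum_E\mathrm{Res}_E(\beta)[E]=c_1(N_{\cal G})+\sum_C\alpha_C[C]$ in $H^2(X,\C)$.

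Finally I would apply the residue theorem for closed meromorphic $1$-forms on a compact surface, which asserts that $\sum_E\mathrm{Res}_E(\beta)[E]=0$ in $H^2(X,\C)$; combined with the previous identity this yields $c_1(N_{\cal G})=-\sum_C\alpha_C[C]$, as claimed. I expect the justification of this residue theorem to be the main obstacle. The clean route is to regard $\beta$ as a current: after subtracting locally exact principal parts of the type $d(z^{1-k})$ coming from poles of order $k\geq 2$, which do not affect the cohomology class, one reduces to logarithmic poles, where $\beta$ is locally integrable and a Poincaré--Lelong type computation gives $d\beta=2\pi i\sum_E\mathrm{Res}_E(\beta)[E]$ in the sense of currents; since the left-hand side is the differential of a globally defined current it is cohomologically trivial, forcing the vanishing. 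A minor point to track throughout is the normalisation of $c_1$ together with the sign conventions for $\nabla$ and for the residue, which jointly account for the minus sign in the statement.
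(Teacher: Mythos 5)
Your strategy is the correct and, in fact, the standard one; note, however, that the paper itself gives no proof of this proposition at all --- it is quoted directly from \cite[Proposition 2.2]{Cousin-Pereira} --- so the only meaningful comparison is with the classical argument behind that citation, which is essentially what you wrote. Taking a global meromorphic section $s$ of $N_{\cal G}$ (legitimate since $X$ is algebraic), forming the global meromorphic $1$-form $\beta=\nabla s/s$, observing that flatness makes $\beta$ closed, computing $\mathrm{Res}_E(\beta)=\mathrm{ord}_E(s)$ for $E\not\subset D$ and $\mathrm{Res}_C(\beta)=\mathrm{ord}_C(s)+\alpha_C$ for $C\subset D$, and then invoking the residue theorem $\sum_E\mathrm{Res}_E(\beta)[E]=0$ in $H^2(X,\C)$ together with $[\mathrm{div}(s)]=c_1(N_{\cal G})$ is exactly how this fact about line bundles carrying flat meromorphic connections is established. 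All of these intermediate steps are carried out correctly in your write-up.

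The one genuinely fragile point is your justification of the residue theorem itself, which you rightly identify as the main obstacle. ``Subtracting locally exact principal parts'' is a local operation: near a generic point of a polar component one can indeed write $\beta=\lambda\frac{\d z}{z}+\d h+(\text{holomorphic closed})$ with $h$ meromorphic, but these $h$ live only in charts and do not glue to a global object, so the claimed reduction to logarithmic poles does not make sense as stated; moreover, even after a reduction by cut-off functions one must still control the contribution of the finitely many singular points of the polar divisor, where the identity $\d[\beta]=2\pi i\sum_E\mathrm{Res}_E(\beta)[E]$ between currents requires a support theorem for closed degree-$2$ currents concentrated on points. Both issues can be repaired, but there is a cleaner, fully elementary route that avoids currents entirely: pair against $H_2(X,\Z)$. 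Represent a homology class by a smooth closed oriented surface $\Gamma$ meeting the polar set transversally at finitely many generic points; applying Stokes to $\beta|_\Gamma$ on $\Gamma$ minus small discs around these points gives $\sum_E\mathrm{Res}_E(\beta)\,(E\cdot\Gamma)=0$, because at each puncture the $\frac{\d z}{z}$ part contributes $2\pi i$ times the residue times the intersection sign, while the $\d h$ part and the holomorphic part integrate to zero over small circles. Since $H^2(X,\C)\cong\mathrm{Hom}(H_2(X,\Z),\C)$, this yields the vanishing. Alternatively, the residue theorem for closed meromorphic $1$-forms on compact complex manifolds is classical and may simply be cited, in which case your proof is complete.
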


\subsection{Extension of transversely affine foliations}

\begin{thm}\label{thm:extensionstructuretransverse}
Let $X$ be an algebraic complex surface and $V\Subset X$ a strongly pseudoconvex domain. Let $K\subset V$ be a compact such that $U=V\setminus K$ is connected and contains the exceptional set of $V$. Then every regular transversely affine foliation $\cal G$ on $U$ extends to a degenerate transversely affine foliation on $V$.
\end{thm}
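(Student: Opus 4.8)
The plan is to reduce the whole statement to the extension of meromorphic functions already carried out in Section~\ref{sec:extensionglobale}, by extending separately the foliation, its defining meromorphic form, and the closed meromorphic $1$-form encoding the affine structure.

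First I would extend the foliation itself: by Theorem~\ref{thm:extension} the regular foliation $\cal G$ on $U$ extends to a singular holomorphic foliation $\tilde{\cal G}$ on $V$. Repeating the construction of Proposition~\ref{prop:extensionsurA}, I fix a basis $\Omega_0,\Omega_1$ of meromorphic $1$-forms on $X$ coming from an embedding $\Phi:X\to\C\Proj^N$; these are closed, being pullbacks of differentials $\d(z_i/z_N)$. I then obtain a global meromorphic $1$-form $\Omega=\Omega_0+\hat F\,\Omega_1$ on $V$ defining $\tilde{\cal G}$, where $\hat F$ is the extension of the slope function $F$.

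Next I would extract the affine datum on $U$. As $\cal G|_U$ is transversely affine, there are local affine transverse submersions $w_j$ with $w_j=a_{jk}w_k+b_{jk}$ and $a_{jk}\in\C^*$. Writing $\Omega=u_j\,\d w_j$ with $u_j$ meromorphic and nonvanishing, the relation $\d w_j=a_{jk}\,\d w_k$ gives $u_k=a_{jk}u_j$ on overlaps; since the $a_{jk}$ are constant, the local exact forms $\d u_j/u_j$ glue into a global closed meromorphic $1$-form $\eta$ on $U$ with $\d\Omega=\eta\wedge\Omega$. Expanding $\eta=P\,\Omega_0+Q\,\Omega_1$ in the fixed basis exhibits $P$ and $Q$ as meromorphic functions on $U$.

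The heart of the argument, and the step I expect to be the main obstacle, is then to extend $P$ and $Q$ to meromorphic functions on all of $V$. I would run verbatim the machinery of Section~\ref{sec:extensionglobale}: Hartogs and Levi extension across non-critical levels of the exhaustion $\rho$ as in Lemma~\ref{lem:passageniveaunoncritique}, the perturbation of $\rho$ past critical levels as in Lemma~\ref{lem:perturbation}, and, across the exceptional set $A$, passage to the Remmert reduction $\pi:V\to Y$ (Proposition~\ref{prop:reduccionderemmert}) followed by Lemma~\ref{lem:extensionfonctionspacenormal} to extend $P\circ\pi^{-1}$ and $Q\circ\pi^{-1}$ over the finite set $\pi(A)$. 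This produces a global meromorphic $1$-form $\hat\eta=\hat P\,\Omega_0+\hat Q\,\Omega_1$ on $V$. Finally, the two structural identities $\d\hat\eta=0$ and $\d\Omega=\hat\eta\wedge\Omega$ hold on the dense connected open set $U$, where both sides are meromorphic on $V$, so by the identity principle they hold on all of $V$. Thus $\tilde{\cal G}$ carries the closed meromorphic $1$-form $\hat\eta$ with $\d\Omega=\hat\eta\wedge\Omega$, i.e. a degenerate transverse affine structure in the sense of Definition~\ref{prop:formestransaffinecomplexe}. The delicate point is purely that the function-extension procedure applies to the coefficients of $\eta$ exactly as it did to the slope $F$; the compatibility of the several extended data is then automatic from the identity principle.
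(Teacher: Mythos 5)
Your proposal follows the paper's proof essentially verbatim: extend the foliation by Theorem~\ref{thm:extension}, build the closed meromorphic $1$-form $\eta$ on $U$ from the affine submersions via the relation $u_ja_{jk}=u_k$ and the gluing of the forms $\d u_j/u_j$ (this is the paper's Lemma~\ref{prop:sectionstransaffinecomplexe}), and then extend $\eta$ to $V$ using the machinery of Section~\ref{sec:extensionglobale}. Your one addition --- writing $\eta=P\,\Omega_0+Q\,\Omega_1$, extending the coefficient functions $P,Q$, and recovering $\d\hat\eta=0$ and $\d\Omega=\hat\eta\wedge\Omega$ by the identity principle --- is precisely the natural way to carry out the step the paper compresses into ``apply to $\eta$ the methods from Section~\ref{sec:extensionglobale}'', so the two arguments are substantively the same.
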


By Theorem \ref{thm:extension} the foliation $\cal G$ defined on $U$ extends to a foliation $\tilde{\cal G}$ on $V$. Let $\omega$ be a meromorphic 1-form on $X$ defining $\tilde{\cal G}$. 

\begin{lem}\label{prop:sectionstransaffinecomplexe}
There exists a closed meromorphic 1-form $\eta$ on $U$ such that $\d\omega=\eta\wedge\omega$ on $U$.
\end{lem}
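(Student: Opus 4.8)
The plan is to build $\eta$ locally as a logarithmic derivative and then check that the \emph{affine} transition data make these local pieces agree on overlaps. First I would use the hypothesis that $\cal G$ is regular and transversely affine on $U$ to produce an open cover $\{U_j\}_{j\in J}$ of $U$ together with holomorphic submersions $g_j:U_j\to\C$ defining $\cal G$ such that on every nonempty overlap $U_j\cap U_k$ one has $g_j=a_{jk}g_k+b_{jk}$ with $a_{jk}\in\C^*$ and $b_{jk}\in\C$ \emph{constant}; differentiating yields $\d g_j=a_{jk}\,\d g_k$.

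Next, since $\omega$ is a meromorphic $1$-form defining $\tilde{\cal G}$ and $\tilde{\cal G}|_U=\cal G$, on each $U_j$ the forms $\omega$ and $\d g_j$ have the same kernel, so I can write $\omega|_{U_j}=h_j\,\d g_j$ for a meromorphic function $h_j$ on $U_j$, namely the quotient of $\omega$ by the nonvanishing holomorphic form $\d g_j$. Differentiating and factoring, I obtain on $U_j$
\[\d\omega=\d h_j\wedge\d g_j=\frac{\d h_j}{h_j}\wedge\bigl(h_j\,\d g_j\bigr)=\eta_j\wedge\omega,\qquad \eta_j:=\frac{\d h_j}{h_j}=\d\log h_j,\]
and each $\eta_j$ is automatically closed as a logarithmic derivative.

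The key step is the compatibility of the $\eta_j$ on overlaps, and this is precisely where the affine (rather than merely projective) nature of the structure enters. On $U_j\cap U_k$ the equalities $h_j\,\d g_j=\omega=h_k\,\d g_k$ together with $\d g_j=a_{jk}\,\d g_k$ force $h_k=a_{jk}h_j$. Since $a_{jk}$ is a \emph{constant}, its logarithmic derivative vanishes, so
\[\eta_k=\d\log h_k=\d\log a_{jk}+\d\log h_j=\d\log h_j=\eta_j.\]
Hence the local forms glue into a single closed meromorphic $1$-form $\eta$ on $U$ satisfying $\d\omega=\eta\wedge\omega$, as required.

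I do not expect a serious obstacle: conceptually the whole argument reduces to the observation that the discrepancy $h_k/h_j=a_{jk}$ between the local trivializations is locally constant, so that passing to the logarithmic derivative annihilates it. The only points deserving a little care are that $h_j$ is genuinely meromorphic (it is, being the ratio of the meromorphic $\omega$ and the nonvanishing holomorphic $\d g_j$) and that the construction lives on the locus $U$, where $\cal G$ is non-singular by hypothesis, so there is no singular set of the foliation to cross during the gluing.
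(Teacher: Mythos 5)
Your proof is correct and follows essentially the same route as the paper: write $\omega|_{U_j}=f_j\,\d g_j$ against the affine submersions, use $\d g_j=a_{jk}\,\d g_k$ with $a_{jk}$ constant to get $f_k=a_{jk}f_j$, and glue the logarithmic derivatives $\d f_j/f_j$ into the closed form $\eta$. The only cosmetic difference is that the paper packages the collection $(f_j)$ as a meromorphic section of the conormal bundle via its Proposition \ref{prop:sectmerom}, whereas you define each $f_j$ directly as the quotient $\omega/\d g_j$; the content is identical.
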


\begin{proof}
Let $\{U_j\}_{j\in J}$ be a cover of $U$ by flow boxes of the transversely affine foliation $\cal G$. Let $\{g_j:U_j\to\C\}$ be a collection of holomorphic local submersions defining the transverse affine structure of $\cal G$ on $U$. On each $U_j\cap U_k\neq\emptyset$, we have
\begin{equation}\label{eq:affine}
g_j=a_{jk}g_k+b_{jk} \quad \text{ where } \quad (a_{jk},b_{jk})\in\C^*\times\C.
\end{equation}
Let $f=(f_j)_{j\in J}$ be a meromorphic section of $N_{\cal G}$ associated to $\omega$ such that $\omega|_{U_j}=f_j\d g_j$ on $U_j$ (see Proposition \ref{prop:sectmerom}). We will verify that the meromorphic 1-forms $\frac{\d f_j}{f_j}$ glue together on $U$. Since $\omega|_{U_j}$ come from the global differential form $\omega$, on each $U_j\cap U_k\neq\emptyset$ we have
\begin{equation}\label{eq:local}
f_j\d g_j=f_k\d g_k.
\end{equation}
Derivating Equation \eqref{eq:affine} and replacing the result in \eqref{eq:local} we obtain $f_ja_{jk}=f_k$. Thus
\[\frac{\d f_j}{f_j}=\frac{\d f_k}{f_k}.\]
We can then define a meromorphic 1-form $\eta$ on $U$ by $\eta|_{U_j}=\frac{\d f_j}{f_j}$. Moreover we have $\d\omega|_{U_j}=\d f_j\wedge\d g_j=(\d f_j/f_j)\wedge f_j\d g_j=\eta|_{U_j}\wedge\omega|_{U_j}$, hence $\d\omega=\eta\wedge\omega$ in $U$.
\end{proof}

The form $\eta$ extends to $V$ because $V$ is strongly pseudoconvex. To see this, it is sufficient to apply to $\eta$ the methods from section \ref{sec:extensionglobale}. This establishes Theorem \ref{thm:extensionstructuretransverse}.

\subsection{Existence of invariant measures}\label{sec:constructionmetrique-mesuretrasinv}

\begin{thm}\label{thm:mesuretransinv}
Let $X$ be a complex algebraic surface. Let $M$ be an analytic Levi-flat hypersurface of $X$. We suppose that the Cauchy-Riemann foliation of $M$ is transversely affine. Then this foliation has a transverse invariant measure.
\end{thm}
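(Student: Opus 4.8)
The plan is to argue \textbf{by contradiction}: suppose the Cauchy--Riemann foliation $\cal F$ is transversely affine yet admits \emph{no} transverse invariant measure, and derive a contradiction with the Deroin--Kleptsyn dichotomy of Theorem \ref{thm:corothmb}.

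First I would transport the affine structure to the complex side. By Proposition \ref{prop:extentionvoisinage} the foliation $\cal F$ extends to a non-singular holomorphic foliation $\cal G$ on a neighbourhood $U$ of $M$, and by the proposition of Section \ref{sec:feuilletagestransaffines} this $\cal G$ is again (non-degenerately) transversely affine on $U$; if one prefers a global object one may instead invoke Theorem \ref{thm:extensionstructuretransverse} to extend $\cal G$ to a degenerate transversely affine foliation on all of $X$. In either case, near $M$ the structure is non-singular and non-degenerate, so it is described by local holomorphic submersions $g_j$ with affine transitions $g_j=a_{jk}g_k+b_{jk}$, $a_{jk}\in\C^{*}$. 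Writing $\omega_j=f_j\,\d g_j$ for a local meromorphic section $f$ of $N_{\cal G}$ (Proposition \ref{prop:sectmerom}), the computation of Lemma \ref{prop:sectionstransaffinecomplexe} gives $f_j a_{jk}=f_k$, so the closed holomorphic $1$-form $\eta:=\d f_j/f_j=\d\log f_j$ is globally defined near $M$ and is precisely the connection form of the flat affine connection on $N_{\cal G}$.

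The heart of the argument is that the affine structure forces the linear holonomy to be \emph{leafwise harmonic}. Along a path $\gamma$ in a leaf the linear part of the holonomy is $a_\gamma=\exp\!\int_\gamma\eta$, so its log-modulus is $\int_\gamma\theta$ with $\theta:=\Re\eta$. Since $\eta$ is holomorphic along the leaves, $\theta$ restricts to each leaf as a \emph{harmonic} $1$-form: it is closed because $\d\eta=0$, and coclosed because $\star\theta=\Im\eta$ is also closed. I would then compute the Lyapunov exponent of an arbitrary $\mathbf g$-harmonic measure $\mu$ as the Brownian drift $\lambda(\mu)=\lim_{t}\tfrac1t\log|a_{\gamma_t}|=\lim_{t}\tfrac1t\int_{\gamma_t}\theta$ along the foliated Brownian motion $\gamma_t$. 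Converting the Stratonovich line integral to an It\^o integral produces a drift term proportional to the codifferential $\delta\theta$, which vanishes; hence $\int_{\gamma_t}\theta$ is a martingale and $\lambda(\mu)=0$. Equivalently, in the formula $\lambda(\mu)=-\int_M\varphi\,\d\mu$ the affine structure provides the metric on $N_{\cal F}$ with pluriharmonic weights $2\log|f_j|$, whose curvature $\varphi=\tfrac{i}{2\pi}\partial\bar\partial_{\cal F}(2\log|f_j|)$ is identically zero.

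This vanishing is the desired contradiction: Theorem \ref{thm:corothmb} asserts that, in the absence of a transverse invariant measure, every ergodic harmonic measure has \emph{strictly negative} Lyapunov exponent, incompatible with $\lambda(\mu)=0$. One must also dispose of the case where the global form $\eta$ has poles meeting $M$: there $M$ would contain an invariant algebraic curve, i.e.\ a compact leaf of $\cal F$, and the atomic transverse measure carried by this leaf is already a transverse invariant measure, again contradicting the hypothesis. I expect the \textbf{main obstacle} to be the rigorous justification of the drift computation of the third paragraph---the precise identification of $\lambda(\mu)$ with the Brownian drift of the cocycle $\theta$ and the use of coclosedness to kill the It\^o correction---since this is exactly the step where the \emph{affine} (as opposed to merely transversely projective) nature of the structure is used, and it is what ultimately converts the chaotic hypothesis into the existence of an invariant measure.
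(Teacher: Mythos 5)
Your overall strategy --- assume no invariant measure, use the extension machinery (Theorems \ref{thm:extension} and \ref{thm:extensionstructuretransverse}) to produce a global meromorphic pair $(\omega,\eta)$ with $\d\omega=\eta\wedge\omega$, and contradict the negativity of Lyapunov exponents in Theorem \ref{thm:corothmb} --- runs parallel to the paper's, but there is a genuine gap in your handling of the singularities of $\eta$. Your form $\eta=\d f_j/f_j$ has poles exactly along the zero and polar divisors of the global meromorphic $1$-form $\omega$ (equivalently of the section $f$ of Proposition \ref{prop:sectmerom}), and these divisors are \emph{not} invariant by the foliation in general: replacing $\omega$ by $h\omega$ changes them by an arbitrary principal divisor, and already $w\,\d z$ defines the foliation $\d z=0$ with non-invariant zero divisor $\{w=0\}$. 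Moreover algebraic curves in $X$ cannot in general be chosen to avoid the real hypersurface $M$ (they meet it in real dimension $1$; in the flat $\C\Proj^1$-bundle examples every fiber crosses $M$). So your final patch --- ``poles of $\eta$ meeting $M$ yield an invariant algebraic curve, hence a compact leaf'' --- covers only the invariant components of $\mathrm{div}(\omega)$; the non-invariant components can and typically do cross $M$, and there your metric with weights $2\log\abs{f_j}$ degenerates. At those crossings the curvature is not ``identically zero'' but acquires singular (current) contributions, and the martingale argument also breaks: the identity $\log\abs{a_\gamma}=\int_\gamma\theta$ is the cocycle norm with respect to the \emph{degenerate} metric, while Lyapunov exponents are metric-independent only among mutually bounded metrics, so concluding $\lambda(\mu)=0$ would require an additional removable-singularity or integrability estimate (e.g.\ integrability of $\log$ of the distance to $\mathrm{div}(f)\cap M$ against every harmonic measure) that you do not supply.

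The paper circumvents precisely this point. It does not work with the naive $\d f/f$ but with the Cousin--Pereira normalization (Proposition \ref{prop:metcourbnulle}), where the flat meromorphic connection on $N_{\tilde{\cal G}}$ has polar divisor $D$ whose irreducible components \emph{are} invariant; under the contradiction hypothesis such a component cannot meet $M$ (it would be contained in $M$ and give a compact leaf, hence an atomic invariant measure), and the $\partial\bar\partial$-lemma on the compact K\"ahler surface $X$ then yields a metric $m_0$ on $N_{\cal F}|_M$ that is genuinely \emph{smooth} and flat. The contradiction is then obtained with no probability at all: writing $m_{\cal F}=e^{-\tau}m_0$, where $m_{\cal F}$ is the positively curved metric of Theorem \ref{thm:metriquecourburepositive}, the leafwise form $\frac{i}{2\pi}\partial\bar\partial_{\cal F}\tau$ would have to be strictly positive everywhere, which fails at a maximum of $\tau$ on the compact $M$. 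If you import the invariance of $D$ from Cousin--Pereira in place of your claim about $\mathrm{div}(\omega)$, your Lyapunov comparison $0=\lambda(\mu)<0$ does close the argument, and you can even drop the stochastic calculus: once the flat metric is smooth on $M$, the formula $\lambda(\mu)=-\int_M\varphi\,\d\mu$ quoted in Section \ref{sec:dynamique} gives $\lambda(\mu)=0$ immediately.
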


\begin{proof}
We suppose that $\cal F$, the Cauchy-Riemann foliation on $M$, does not have a transverse invariant measure. By Theorem \ref{thm:metriquecourburepositive}, the normal bundle $N_{\cal F}$ of $M$ has a metric with positive curvature in the direction of the leaves that we note $m_{\cal F}$.

By Section \ref{sec:extensionglobale}, the foliation $\cal F$ defined on the Levi-flat hypersurface $M$ extends to a holomorphic foliation $\tilde{\cal G}$ on the whole surface $X$. By Theorem \ref{thm:extensionstructuretransverse} this extended foliation is transversely affine on a neighbourhood of $M$ and degenerately transversely affine on $X$. By Proposition \ref{prop:metcourbnulle} the bundle $N_{\tilde{\cal G}}\otimes\cal O(\sum\alpha_C[C])$ is flat on $X$, where $C$ denotes the irreducible components of the divisor $D$ where the transverse structure degenerates. Since $X$ is kählerian, this bundle has a metric with trivial curvature by the $\partial\bar\partial$-Lemma. Since the irreducible components $C$ of $D$ do not meet a neighbourhood of $M$, the bundle $N_{\cal F}$ has, on $M$, a metric $m_0$ with trivial curvature.

The metrics $m_{\cal F}$ and $m_0$ on $N_{\cal F}$ are related by $m_{\cal F}=e^{-\tau}m_0$, where $\tau:M\to\R$ is a function of class $\Cinf$. The curvatures of these two metrics are related then on $M$ by the equation
\begin{equation}\label{eq:RelationCourbures-dem2}
\frac{i}{2\pi}\partial\bar\partial_{\cal F}(-\log m_{\cal F})=\frac{i}{2\pi}\partial\bar\partial_{\cal F}\tau+\frac{i}{2\pi}\partial\bar\partial_{\cal F}(-\log m_0).
\end{equation}
The last term on the right hand side is zero because it is equal to the curvature of the metric $m_0$. Now, since $M$ is compact and $\tau$ is continuous, the function $\tau$ reaches its maximum on $M$ at a point $p$. The restriction of $\tau$ to the leaf of $\cal F$ passing by $p$ reaches also its maximum at $p$ and then $i\partial\bar\partial_{\cal F}\tau$ is negative on $p$. We obtain a contradiction because the left term in the equation \eqref{eq:RelationCourbures-dem2} is strictly positive.
\end{proof}

As we announced in the introduction, combining this result with the following theorem of Ghys, which gives the nature of transverse invariant measures on foliated 3-manifolds that are transversely affine, we obtain that a transversely affine Levi-flat hypersurface in an algebraic complex surface is quasi-periodic or has an algebraic curve.

\begin{thm}\label{thm:classificationmesuresGhys}\cite{Ghys}
Let $M$ be a compact analytic 3-manifold and let $\cal F$ be a codimension 1 analytic foliation on $M$. We suppose that $\cal F$ is transversely affine and that it has an ergodic transverse invariant measure $\mu$. Then $\mu$ is of one of the following types:
\begin{enumerate}
\item $\mu$ is supported on a compact leaf of $\cal F$.
\item $\cal F$ is Riemannian, i.e. there exists a riemannian metric on the normal bundle of $\cal F$ that is invariant by holonomy and $\mu$ is the volume measure associated to this metric.
\end{enumerate}
\end{thm}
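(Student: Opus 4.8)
The plan is to reduce everything to the one-dimensional dynamics of the affine holonomy pseudogroup on a complete transversal and then to split into cases according to its linear part. Fix a complete transversal $T$ carrying the transverse affine coordinate, so that every holonomy map has the form $g(x)=a_g x+b_g$ with $a_g\in\R^*$; the assignment $g\mapsto a_g$ is multiplicative, and the hypothesis is that the ergodic measure $\mu$ on $T$ satisfies $g_*\mu=\mu$ for every $g$ in the holonomy pseudogroup. The central dichotomy is whether the linear part $|a_g|$ equals $1$ for all holonomy maps whose domain meets the support of $\mu$, or whether some such map has $|a_g|\neq1$.

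First I would treat the case where some holonomy map $g$ with domain meeting $\operatorname{supp}\mu$ has $|a_g|\neq1$; replacing $g$ by $g^{-1}$ if necessary, I may assume $|a_g|<1$, so that $g$ is a contraction with a unique fixed point $x_0$. The crucial lemma is that a finite invariant measure for a contraction must be atomic, concentrated at the fixed point: for a box $B$ with $x_0\notin\overline B$ the iterated images $g^n(B)$ have pairwise disjoint interiors and accumulate only at $x_0$, so invariance gives $\mu(g^n(B))=\mu(B)$ for all $n$, and finiteness of $\mu$ forces $\mu(B)=0$. Hence $\mu$ is supported on the fixed-point set of such contractions. Ergodicity then forces $\mu$ to be a single atom (otherwise its holonomy orbit would produce infinitely many atoms of equal mass, contradicting finiteness), and this atom has finite orbit under the pseudogroup. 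Since a leaf meeting a complete transversal in finitely many points is compact, $\mu$ is supported on a compact leaf, which is conclusion (1).

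In the complementary case every holonomy map acting near $\operatorname{supp}\mu$ has $a_g=\pm1$, i.e. it is a Euclidean isometry of $\R$. Then the transverse Euclidean length element coming from the affine charts is holonomy-invariant, so it defines a metric on $N_{\cal F}$ invariant under holonomy, exhibiting $\cal F$ as a Riemannian foliation; the associated transverse volume is an invariant measure, and by ergodicity $\mu$ must be proportional to it, which is conclusion (2). Here the analyticity of $\cal F$ is what lets me upgrade invariance of the linear part from a neighbourhood of $\operatorname{supp}\mu$ to all of $M$: the multiplier $|a_g|$ is locally constant, so a connectedness/analytic-continuation argument propagates the isometric transverse structure globally.

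The main obstacle I expect is precisely this last point, namely passing from information localised on $\operatorname{supp}\mu$ (which could be an exceptional minimal set with empty interior) to a genuinely global invariant metric, and correspondingly excluding a ``mixed'' situation in which the linear holonomy is trivial on the support but nontrivial elsewhere. The contraction-implies-atomic lemma, while elementary, also requires some care in verifying the near-disjointness of the iterates $g^n(B)$ and in handling the reflection case $a_g=-1$, which is isometric and must be grouped with the Euclidean rather than the hyperbolic alternative, possibly after passing to the orientation double cover of the transverse structure.
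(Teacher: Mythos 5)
The paper gives no proof of this statement---it is quoted from Ghys's work---so there is nothing internal to compare your argument against; judged on its own terms, your argument contains a genuine error, located in your Case A. You treat a holonomy element $g$ with $\abs{a_g}\neq 1$ whose domain meets $\operatorname{supp}\mu$ as a contraction and iterate it towards its fixed point $x_0$. But a pseudogroup element is only locally defined, and the fixed point of its affine extension need not lie in its domain, nor indeed anywhere on the transversal; in that case the iterates $g^n(B)$ are either undefined or fail to accumulate at any point of the transversal, and the lemma ``a finite invariant measure of a contraction is atomic'' collapses. This is not a removable technicality, because conclusion (2) of the theorem does not assert that the \emph{given} affine structure is Euclidean, only that the foliation admits \emph{some} holonomy-invariant transverse metric; there are transversely affine foliations essentially all of whose linear parts are $\neq 1$ which nevertheless fall under conclusion (2), and your dichotomy misclassifies them.

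Concretely: let $\lambda>1$, let $\s^1_\lambda=\R_{>0}/(x\sim\lambda x)$ be the Hopf circle, whose affine structure has charts in $\R_{>0}\subset\R$ with transition maps $x\mapsto\lambda^k x$, and whose affine automorphisms are induced by the linear maps $x\mapsto cx$. Suspend over $\T^2$ a homomorphism $\Z^2\to\R_{>0}/\lambda^{\Z}$ with dense image, sending the generators to multiplications by $c_1,c_2$. The suspension is an analytic, transversely affine foliation (all transitions are linear) on a compact $3$-manifold, and the measure $\d x/x$ is invariant under every map $x\mapsto cx$, finite on compact transversals, non-atomic, of full support, and ergodic (after $y=\log x$ it is Lebesgue measure for an irrational rotation, hence uniquely ergodic). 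Every nontrivial holonomy element has linear part $c_1^m c_2^n\neq 1$ and domain meeting $\operatorname{supp}\mu=M$, so your Case A applies and would force $\mu$ to be an atom on a compact leaf---which is false. One can see exactly where the mechanics break: read in the circle $\s^1_\lambda$, the iterates $g^n(B)$ are rotates of $B$ of equal measure, so they cannot be pairwise disjoint, and the point $0$ at which they ``should'' accumulate is not a point of the transversal. The theorem itself survives because in the non-affine coordinate $y=\log x$ this foliation is transversely Euclidean, i.e.\ Riemannian. The correct dichotomy must therefore be phrased through the developing map and the affine holonomy group: either that group is conjugate to a group of isometries of $\R$, or it fixes a point of $\R$ lying \emph{outside} the developed image (the Hopf case)---both alternatives yield conclusion (2)---or some holonomy element has a hyperbolic fixed point genuinely realized on the transversal, and only then does your contraction argument (with $B$ chosen as a fundamental annulus $[ad,d)$ to guarantee disjointness) produce an atomic measure on a compact leaf. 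Your Case B has a secondary, related weakness---knowing $\abs{a_g}=1$ for elements with domain near $\operatorname{supp}\mu$ does not propagate to all of $M$, since holonomy elements far from the support are unconstrained by $\mu$---but the decisive flaw is the one above.
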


\subsection{Hyperbolic torus bundles as Levi-flat hypersurfaces}

\begin{thm}
Let $X$ be an algebraic complex surface. Let $M$ be an analytic Levi-flat hypersurface on $X$ diffeomorphic to a hyperbolic torus bundle. Then the Cauchy-Riemann foliation on $M$ has at least one compact leaf.
\end{thm}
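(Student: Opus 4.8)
The plan is to argue by contradiction, combining the existence statement of Theorem \ref{thm:mesuretransinv} with the Ghys--Sergiescu classification (Theorem \ref{thm:Ghysfeuilletages}). Suppose that the Cauchy--Riemann foliation $\cal F$ of $M$ has no compact leaf. Since $M$ is diffeomorphic to an orientable hyperbolic torus bundle $\T^3_A$ and $\cal F$ is real analytic, hence of class $\cal C^r$ for all $r\ge 2$, I would first check that $\cal F$ is transversely orientable: its leaves are complex curves and so carry a canonical orientation, while $M\cong\T^3_A$ is orientable because $\det A=1$, so the quotient orientation orients $N_{\cal F}$. With these hypotheses in place, Theorem \ref{thm:Ghysfeuilletages} applies and yields that $\cal F$, having no compact leaf, is conjugate to one of the model foliations of $\T^3_A$.

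Next I would extract from the model the two incompatible properties that close the argument. On the one hand, the model foliations are suspensions of a linear eigendirection foliation of $\T^2$ under the hyperbolic automorphism $A$, and they are transversely affine, the transverse holonomy being generated by the $\T^2$-translations together with the scaling $t\mapsto\lambda t$ induced by the eigenvalue $\lambda$ of $A$ with $\abs{\lambda}\neq 1$. Transporting this structure through the conjugacy, $\cal F$ becomes transversely affine on $M$, so Theorem \ref{thm:mesuretransinv} forces $\cal F$ to admit a transverse invariant measure. On the other hand, the model foliations admit no transverse invariant measure: such a measure would have to be invariant under the transverse translations, hence proportional to the Lebesgue transverse element, which the monodromy multiplies by $\lambda\neq\pm 1$; these are precisely the emblematic chaotic foliations recalled in the introduction. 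Since the existence of a transverse invariant measure is preserved under conjugacy, $\cal F$ admits none. The two conclusions contradict each other, so $\cal F$ must have a compact leaf.

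The main obstacle is to make the transfer of the transverse affine structure legitimate for the analytic machinery behind Theorem \ref{thm:mesuretransinv}: that theorem relies on the \emph{analytic} transverse affine structure, which is what lets one extend $\cal F$ to a holomorphic, degenerate transversely affine foliation on $X$, whereas Theorem \ref{thm:Ghysfeuilletages} as stated only furnishes a $\cal C^{r-2}$ conjugacy. I would remove this difficulty by invoking the analytic refinement of the Ghys--Sergiescu classification, valid for analytic foliations, in which the conjugacy to the model is itself analytic; the affine transverse coordinates of the model then pull back analytically, and $\cal F$ is transversely affine in the sense required by Theorem \ref{thm:mesuretransinv}. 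The remaining points---transverse orientability and the normalization $\det A=1$, $\tr A>2$ of the monodromy---are routine, the former being automatic from the complex orientation of the leaves as noted above.
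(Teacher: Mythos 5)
Your proof is correct and takes essentially the same route as the paper: assume there is no compact leaf, invoke the Ghys--Sergiescu classification (Theorem \ref{thm:Ghysfeuilletages}) to conjugate $\cal F$ to a model stable/unstable foliation, observe that these are transversely affine with no transverse invariant measure, and contradict Theorem \ref{thm:mesuretransinv}. Your concern about the regularity of the conjugacy is well placed, and your resolution is exactly what the paper implicitly relies on: for analytic foliations the Ghys--Sergiescu conjugacy to the model is itself analytic (as the paper states in its introduction), which is what legitimizes transporting the transverse affine structure into the analytic setting required by Theorem \ref{thm:mesuretransinv}.
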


\begin{proof}
If the Cauchy-Riemann foliation on $M$ does not have a compact leaf, then it is conjugated to the stable or unstable foliation on $M$ by Ghys and Sergiescu's Theorem \ref{thm:Ghysfeuilletages}. These foliations are transversely affine and do not have a transverse invariant measure. Theorem \ref{thm:mesuretransinv} gives us then a contradiction.
\end{proof}

Under the hypotheses of the last theorem, we can ask if the Cauchy-Riemann foliation has several compact leaves, or even if it is a fibration in compact Riemann surfaces. The following proposition, communicated to us by Étienne Ghys, shows that this is not always the case (in particular, \cite[Proposition 2, p194]{Ghys-Sergiescu} is not true).

\begin{prop}
Every hyperbolic torus bundle $\T^3_A$ admits foliations by Riemann surfaces presenting simultaneously compact and non compact leaves.
\end{prop}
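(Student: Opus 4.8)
The plan is to realize such a foliation as an $A$-equivariant foliation on $\T^2\times\R$ that interpolates between the torus fibration (all leaves compact) and a model foliation (all leaves non-compact), spiraling onto a single fiber. Write $\lambda>1$ for the unstable eigenvalue of $A$, let $v_u$ be the corresponding eigenvector, let $v_s$ be the stable eigenvector, and let $\eta$ be the constant-coefficient $1$-form on $\T^2$ determined by $\eta(v_u)=0$ and $\eta(v_s)=1$. Since $\eta$ has constant coefficients it is closed, and a direct computation gives $A^*\eta=\lambda^{-1}\eta$. The key observation is that for any smooth function $g$ of the variable $t\in\R$ alone, the $1$-form
\[
\alpha:=\d t+g(t)\,\eta
\]
on $\T^2\times\R$ is automatically integrable: because $\eta$ is closed one has $\d\alpha=g'(t)\,\d t\wedge\eta$, whence $\alpha\wedge\d\alpha=0$. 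Thus $\ker\alpha$ defines a codimension-one foliation for every choice of $g$.

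First I would arrange the descent to $\T^3_A$. The deck transformation $T(p,t)=(Ap,t+1)$ satisfies $T^*\alpha=\d t+g(t+1)\lambda^{-1}\eta$, so $\ker\alpha$ is $T$-invariant exactly when $g(t+1)=\lambda\,g(t)$. I therefore set
\[
g(t):=\lambda^{t}\,h(t),\qquad h(t):=1-\cos(2\pi t),
\]
with $h$ analytic, $1$-periodic, non-negative, vanishing to order two precisely at the integers; then $g(t+1)=\lambda g(t)$ and $g\geq 0$ with $g^{-1}(0)=\Z$. Consequently $\alpha$ descends to an analytic foliation $\cal F$ on $\T^3_A=\left(\T^2\times\R\right)/T$. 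The leaves, being real surfaces tangent to the analytic plane field $\ker\alpha$, carry an analytic leafwise conformal structure, so $\cal F$ is a foliation by Riemann surfaces in the sense of Definition \ref{def:feuilletage3reel}.

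It then remains to exhibit the two types of leaves. Where $g(t)=0$, i.e. along the single fiber $F_0=\{t\equiv 0\}\subset\T^3_A$, one has $\alpha=\d t$, so $\ker\alpha$ is the fiber tangent plane and $F_0$ is a compact leaf, a torus isotopic to the fiber. On the complementary region $0<t<1$ the form is genuinely tilted: denoting by $\sigma$ the affine coordinate along $v_s$ on the cover (so that $\eta=\d\sigma$), the leaves are the graphs $\sigma=\sigma_0-\int^{t}\d s/g(s)$ with the $v_u$-direction free. Because $g$ vanishes to order two at the endpoints, the holonomy integral $\int \d s/g(s)$ diverges as $t\to 0^+$ and $t\to 1^-$, so $\sigma\to\pm\infty$ and each such leaf wraps infinitely around the irrational $v_s$-direction while accumulating on $F_0$; in particular every leaf with $0<t<1$ is non-compact. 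Hence $\cal F$ has simultaneously a compact leaf and a continuum of non-compact leaves. The only delicate points are the verification that the scaling $A^*\eta=\lambda^{-1}\eta$ cancels exactly the factor produced by $g(t+1)=\lambda g(t)$, so that the foliation closes up on $\T^3_A$, and the divergence of the holonomy integral at the zero of $g$, which is what forces the nearby leaves to be non-compact rather than closing up into tori.
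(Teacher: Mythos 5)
Your proposal is correct and is essentially the paper's own construction in dual form: the paper tilts the fiber plane field inside the Sol-group frame, taking $\R X+\R(Y+f(t)Z)$ with $f$ periodic and vanishing somewhere, while you tilt the defining $1$-form $\d t$ by $g(t)\eta$ with the twisted periodicity $g(t+1)=\lambda g(t)$ absorbing the monodromy eigenvalue, and under the change of frame these give the same family of foliations. Both arguments produce a compact torus leaf at each zero of the tilting function and non-compact leaves spiraling onto it elsewhere, so the two proofs differ only in presentation, not in substance.
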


\begin{proof}
Let us view $\T^3_A$ as a quotient of a Lie group $G$ of dimension 3 by a lattice $\Gamma\subset G$. Let $G=\R\ltimes\R^2$ where the semi-direct product is given by the action of $\R$ on $\R^2$
\[t\cdot(x,y)=(e^tx,e^{-t}y),\]
Let $A$ be a hyperbolic matrix. Let $\Lambda$ be a lattice invariant by the matrix $A$. We define $\Gamma=\Z\log\lambda\ltimes\Lambda$. Then
\[\R^2/\Lambda\to\Gamma\backslash G\to\Z\log\lambda\backslash\R\]
is a torus bundle. We define 3 one-parameter groups
\begin{align*}
\phi_t:\R\to G:t\mapsto(t,0,0)\\
\phi_x:\R\to G:x\mapsto(0,x,0)\\
\phi_y:\R\to G:y\mapsto(0,0,y)
\end{align*}
that give us respective vector fields $Z,X$ and $Y$, which applied to a $\Cinf$-function $h:\Gamma\backslash G\to\R$ and calculated at a point $p=(t_0,x_0,y_0)$ give
\[Z\cdot h(p)=\frac{\partial h}{\partial t}(p),\qquad
X\cdot h(p)=e^{t_0}\frac{\partial h}{\partial x}(p),\qquad
Y\cdot h(p)=e^{-t_0}\frac{\partial h}{\partial y}(p).\]

These satisfy moreover the commutator relations:
\[[Z,X]=X,\,[Z,Y]=-Y \text{ and } [X,Y]=0.\]

These vector fields give us a frame of the tangent bundle of the group $G$. With them we can define foliations by giving integrable distributions of planes, for example
\begin{equation*}
\R Z+\R X,\,\R Z+\R Y,\,\R X+\R Y,
\end{equation*}
that correspond respectively to unstable, stable and trivial foliations. With these three fields we can build a foliation without Reeb component and with compact leaves on our torus bundle. Indeed, let $f:G\to\R$ be an analytic function that depends only on $t$ and let
\[\cal C:=\R X+\R(Z+f(t)Y)\]
be a distribution of planes on $G/\Gamma$. We verify that this is an integrable distribution:
\begin{align*}
[X,Y+f(t)Z]
 &= [X,Y]+[X,f(t)Z]\\
 &= X(f(t)Z)-f(t)ZX\\
 &= (X\cdot f)Z+f(t)[X,Z]\\
 &= (X\cdot f)Z-f(t)X\quad(\text{since }[X,Z]=-X)\\
 &= -f(t)X\quad\text{(since $f$ does not depend on $t$).}
\end{align*}
So $[X,Y+f(t)Z]=-f(t)X$ belongs to $\cal C$ and our distribution is integrable. This foliation has compact leaves for all $t\in\R$ such that $f(t)=0$. They are tori of the fibration. It does not have Reeb components: between two compact leaves the leaves are dense and wind around the compact leaves.
\end{proof}

\bibliographystyle{alpha-abbrvsort}
\bibliography{mybib}

\end{document}